\documentclass[11pt, oneside]{article}   	
\usepackage{geometry}                		
\geometry{letterpaper}                   		
\usepackage{graphicx}				
\usepackage{amssymb}
\usepackage{amsthm}
\usepackage{comment}
\usepackage{paralist}

\usepackage{capt-of}

\newcommand{\etal}{et al.}
\newcommand{\apriori}{a priori}

\newcommand{\kmeans}{k-means}
\usepackage{hyperref}
\usepackage{jpfairbanks}

\newcommand{\roc}{ring of cliques}
\newcommand{\Roc}{{\cal R}_{b,q}} 

\newcommand{\cut}[1]{E\paren{S,\bar{S}}}
\newcommand{\conductance}[1]{\operatorname{\phi}\left({#1}\right)}
\newcommand{\condG}{\phi_G}
\newcommand{\sweepcut}[2]{S_{#1}\paren{#2}}

\newcommand{\vol}[1]{\operatorname{Vol}\left(#1\right)}

\newcommand{\pairmin}[2]{\min \paren{{#1},{#2}}}
\newcommand{\pairmax}[2]{\max \paren{{#1},{#2}}}

\newcommand{\rayquot}{Rayleigh quotient}

\newcommand{\eres}{eigenresidual}

\newcommand{\evecs}{eigenvectors}

\newcommand{\ray}[2]{{#1}^T #2 #1}


\newcommand{\rtwo}{\sqrt{2}}

\newcommand{\ones}{\mathbf{1}}

\newcommand{\clintervalcl}[2]{\bracket{#1,#2}}
\newcommand{\opintervalop}[2]{\paren{#1,#2}}
\newcommand{\spectralgap}{\delta_1}

\newcommand{\blendinterval}{\clintervalcl{\lambda_q}{\lambda_p}}

\newcommand{\ahat}{{\hat{A}}}
\newcommand{\ahatshifted}{\ahat - \bk \bk^t}

\DeclareMathOperator{\spann}{Span}

\title{Spectral Partitioning with Blends of Eigenvectors}
\author{James P Fairbanks \and Geoffrey D Sanders \and David A Bader}
\date{\today{}}
\usepackage{cleveref}
\newtheorem{thm}{Theorem}
\newtheorem{lemma}{Lemma}

\newtheorem{coro}{Corollary}
\newtheorem{rem}{Remark}


\newcommand{\beq}{\begin{equation}}
\newcommand{\eeq}{\end{equation}}
\newcommand{\bdm}{\begin{displaymath}}
\newcommand{\edm}{\end{displaymath}}
\newcommand{\beqa}{\begin{eqnarray}}
\newcommand{\eeqa}{\end{eqnarray}}
\newcommand{\beqas}{\begin{eqnarray*}}
\newcommand{\eeqas}{\end{eqnarray*}}
\newcommand{\barr}{\begin{array}}
\newcommand{\earr}{\end{array}}
\newcommand{\bit}{\begin{itemize}}
\newcommand{\eit}{\end{itemize}}
\newcommand{\qq}[1]{\qquad \mbox{#1} \qquad}


\newcommand{\cG}{{\cal G}}

\newcommand{\cN}{{\cal N}}
\newcommand{\cO}{{\cal O}}

\newcommand{\cV}{{\cal V}}
\newcommand{\cX}{{\cal X}}

\newcommand{\bc}{{\bf c}}

\newcommand{\be}{{\bf e}}

\newcommand{\bg}{{\bf g}}
\newcommand{\bh}{{\bf h}}

\newcommand{\bk}{{\bf k}}

\newcommand{\bp}{{\bf p}}
\newcommand{\bq}{{\bf q}}

\newcommand{\bs}{{\bf s}}

\newcommand{\bv}{{\bf v}}

\newcommand{\bx}{{\bf x}}
\newcommand{\by}{{\bf y}}
\newcommand{\bz}{{\bf z}}
\newcommand{\bfo}{{\bf 1}}


\newcommand{\Lh}{\hat{L}}
\newcommand{\Ah}{\hat{A}}

\newcommand{\amat}{M}
\newcommand{\Dneghalf}{D^{-\half}}
\newcommand{\Dhalf}{D^{\half}}

\newcommand{\fiedval}{\lambda_2\paren{L}}
\newcommand{\ritzval}{\mu}
\newcommand{\fiedvec}{\bv_2\paren{L}}

\newcommand{\Pipq}{\Pi}



\usepackage{xfrac}
\renewcommand{\cref}{\Cref}

\begin{document}

\maketitle
\begin{abstract}
Many common methods for data analysis rely on linear algebra.
We provide new results connecting data analysis error to numerical accuracy in the context of spectral graph partitioning.
We provide pointwise convergence guarantees so that
spectral blends (linear combinations of eigenvectors) can be employed to solve data analysis problems with confidence in their accuracy.
We apply this theory to an accessible model problem, the ring of cliques, by deriving the relevant eigenpairs and
  finding necessary and sufficient solver tolerances.
Analysis of the ring of cliques provides an upper bound on eigensolver tolerances for graph partitioning problems.
These results bridge the gap between linear algebra based data analysis methods and the convergence theory of iterative approximation methods.
These results explain how the combinatorial structure of a problem can be recovered much faster than numerically
accurate solutions to the associated linear algebra problem.
\end{abstract}


\section{Introduction}

Spectral methods are a valuable tool for finding cluster structure in data.
While all spectral methods rely on approximating the eigenvectors of a matrix, the impact of numerical accuracy on 
the quality of the partitions is not fully understood.
Spectral partitioning methods proceed in two steps, first one or more vectors approximating eigenvectors of a graph matrix are computed, and then a partitioning scheme is applied to those vectors.
While many theoretical results quantify the relationship between the exact solution to the numerical problem and the solution to the original data mining problem,
  few address data analysis errors introduced by error in the numerical solution.
For instance \cite{kannan2004goodbadspectral} studies the runtime and quality (in terms of conductance) of partitioning algorithms including spectral methods.
Often the eigenvector computation is used as a primitive operation without accounting for the trade-off between run time and numerical accuracy.
Guattery and Miller~\cite{guattery1998quality} studies various methods of applying exact eigenvectors to partition graphs by producing examples where each variation does not find the optimal cut.
Our paper addresses the effect of numerical error in the eigenvector computation on the quality of sweep cuts which reveal graph structure.

In order to understand the impact of numerical error on spectral partitioning, we study both general matrices and a
specific family of graphs.
Finding error and residual tolerances for general graphs is a difficult problem.
\Cref{sec:blends} provides tools for deriving a residual tolerance for arbitrary graphs.
\Cref{sec:ROCstart} analyzes a model problem with clear cluster structure, where linear combinations of eigenvectors
represent a space of multiple good partitions,
and applies \Cref{sec:blends} results to derive a residual tolerance sufficient for solving this model problem.
This use of a model problem is well established in the linear algebra literature where the Laplace equation on a regular grid
is common in papers and software regarding the solution of systems of equations.
Analysis of this model problem allows us to derive a solver tolerance for correctly recovering the clusters with a sweep cut scheme.
This analysis illustrates the difference between accurately solving the equation and correctly recovering the combinatorial structure.

This approach to approximate eigenvectors can be applied to other applications where a numerical method solves a data mining problem,
  such as solving personalized Pagerank as a linear system~\cite{delcorso2005pageranklinearsystem} to rank vertices in a graph,
  or evaluating commute times~\cite{DoyleSnell1984} to produce a metric distance on the vertices.
These methods also apply numerical solvers to infer a combinatorial or data analysis structure from the graph. 
A similar treatment, in terms of a model problem, of these methods would benefit our understanding of the relationship between
numerical accuracy and data analysis accuracy.

 Here we introduce the necessary concepts of data analysis quality and eigensolver accuracy.
For this work we focus on partitioning graphs to minimize conductance as defined below.
For any $S\subset V$, $S\cup \bar{S}=V$ represents a cut of the graph.
Define $\vol{S} = \sum_{i,j\in S} a_{i,j}$ as the total weight of the edges within $S$.
And define $\cut{S} = \sum_{i\in S, j\notin S} a_{i,j}$ as the total weight of edges with one
vertex in $S$ and one vertex in the complement of $S$.
The conductance of a cut $S$ is thus given by the formula~\cite{kannan2004goodbadspectral}:
\[
    \conductance{S} = \frac{\cut{S}}{\pairmin{\vol{S}}{\vol{\bar{S}}}}.
\]
For any vector $\bx$, represent the sweep cut of $\bx$ at $t$ as in \cref{eqn:sweepcut}.
\begin{equation}\label{eqn:sweepcut}
    S=\sweepcut{\bx}{t} = \set{i\mid x_i > t}
\end{equation}
We denote by $\conductance{\bx}$ the minimal conductance of a sweep cut of $\bx$, that is $\min_t \conductance{S_x\paren{t}}$.
The conductance of the graph is defined as $\condG = \min_S \conductance{S}$.
If the graph has multiple partitions with conductance less than a value $\psi$, then the application might accept any of them.

The accuracy of a solution to the eigenvector problem can be measured in three quantities: \rayquot{}, error, and residual.
Spectral methods for finding low-conductance partitions rely on computing vectors $\bx$ and corresponding scalars $\lambda$ that solve the equations $\amat \bx = \lambda \bx$ for some graph-associated matrix $\amat$.
The \rayquot{}, $\ritzval = \bx^t \amat \bx$ is an approximation to the eigenvalue $\lambda$.
The error $\norm{\bv-\bx}$ where $\bv$ is the closest exact solution 
is not accessible to a solver in general.
The solver can use the norm of the eigenresidual, $\norm{\amat\bx - \ritzval\bx}$, to determine when to stop iterations.
Throughout this paper $\norm{\cdot}$ will
be taken to mean the 2-norm with subscripts used to clarify when necessary.
In order to practically use an eigensolver, one must choose a residual tolerance $\norm{\amat\bx-\ritzval\bx} <
\epsilon$ sufficient to ensure that the computed eigenvector is accurate enough to solve the application problem.
This paper provides concrete residual tolerances for a specific model problem and provides tools for finding such
tolerances for more general graphs.

We briefly summarize notation for various graph matrices. 
Let $\ones$ be the all ones vector.
If 
  $A$ is the adjacency matrix of a graph and 
  $D$ is the diagonal matrix whose entries are $d_{i,i}=\paren{A\ones}_i$,
  then $L=D-A$ is the combinatorial Laplacian,
  and $\Lh = I - \Dneghalf A \Dneghalf$ is the normalized Laplacian. 
Solutions of the generalized eigenvector problem $L\by=D\by$ are scaled solutions to the eigenvector problem $\Dneghalf A \Dneghalf \bx = \bx$ where the scaling is $\bx = \Dhalf \by$. 
We refer to $\Dneghalf A \Dneghalf$ as $\Ah$, and use the identity
$\lambda_k(\Lh) = 1-\lambda_{n-k}(\Ah)$ to replace computations involving small eigenvalues of the normalized Laplacian
matrix with computations involving large eigenvalues of the adjacency matrix.

Conductance is an appropriate measure of partition quality for spectral partitioning because of Cheeger's inequality
which bounds the conductance of the graph in terms of the eigenvalues of the Laplacian matrix.
\renewcommand{\amat}{\Lh}
\begin{thm}{General Cheeger Inequality\cite{mihail1989conductance}}\label{thm:cheeger}
    If $\bx$ is a unit vector orthogonal to $D^{\half}\ones$ such that $\ray{\bx}{\amat} = \ritzval$ then $\Dneghalf\bx$ has a sweep cut $S$ such that
  $\conductance{S} = \conductance{\bx} \leq \sqrt{2\ritzval}$.
\end{thm}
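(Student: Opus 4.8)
The plan is to reduce the statement about the normalized Laplacian $\Lh$ to the classical generalized Rayleigh quotient and then apply a sweep-cut rounding argument. First I would set $\by = \Dneghalf\bx$. Since $\Lh = \Dneghalf L \Dneghalf$ with $L = D - A$, the Rayleigh quotient becomes $\ray{\bx}{\Lh} = \by^T L \by$, and because $\bx$ is a unit vector $\by^T D \by = \bx^T\bx = 1$; hence
\[
\ritzval = \frac{\by^T L \by}{\by^T D \by} = \frac{\sum_{(i,j)\in E} a_{ij}(y_i-y_j)^2}{\sum_i d_i y_i^2}.
\]
The hypothesis $\bx \perp \Dhalf\ones$ becomes $\by^T D \ones = \sum_i d_i y_i = 0$, so $\by$ is centered in the degree-weighted inner product, which is precisely the condition under which a low-conductance sweep cut of $\Dneghalf\bx = \by$ must exist.

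Next I would prepare $\by$ so the rounding applies to a nonnegative vector supported on at most half the volume. Shifting $\by$ by a degree-weighted median $c$ leaves the numerator unchanged (as $L\ones = 0$) and can only enlarge the denominator (because $\sum_i d_i y_i = 0$), so the shifted vector $g = \by - c\ones$ still has generalized Rayleigh quotient at most $\ritzval$, while both $\set{i\mid y_i > c}$ and $\set{i\mid y_i < c}$ have volume at most $\tfrac12\vol{V}$. Splitting $g$ into its nonnegative positive and negative parts, $g = g_+ - g_-$, the edgewise bound $(g_{+,i}-g_{+,j})^2 + (g_{-,i}-g_{-,j})^2 \le (g_i-g_j)^2$ together with the mediant inequality $\frac{p+r}{q+s}\ge\pairmin{\frac pq}{\frac rs}$ forces one of $g_+,g_-$ to have generalized Rayleigh quotient at most $\ritzval$. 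After relabeling I may assume it is $g_+$, a nonnegative vector supported on a set of volume at most $\tfrac12\vol{V}$.

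The crux is the rounding lemma: for such a nonnegative $\bz = g_+$ there is a threshold whose sweep cut $S$ obeys $\conductance{S}\le\sqrt{2\ritzval}$. I would prove it by averaging over a random threshold, drawing the squared level $s$ uniformly on $[0,\max_i z_i^2]$ and cutting at $\tau = \sqrt{s}$, so that $\expect{\vol{S_\tau}}$ and the expected weight of cut edges are proportional, with the same constant, to $\sum_i d_i z_i^2$ and $\sum_{(i,j)\in E} a_{ij}\lvert z_i^2 - z_j^2\rvert$ respectively. Writing $\lvert z_i^2 - z_j^2\rvert = \lvert z_i - z_j\rvert\,\lvert z_i + z_j\rvert$ and applying Cauchy-Schwarz gives
\[
\sum_{(i,j)\in E} a_{ij}\lvert z_i^2 - z_j^2\rvert \le \sqrt{\textstyle\sum_{(i,j)} a_{ij}(z_i-z_j)^2}\,\sqrt{\textstyle\sum_{(i,j)} a_{ij}(z_i+z_j)^2} \le \sqrt{2\ritzval}\sum_i d_i z_i^2,
\]
where the final step uses $(z_i+z_j)^2 \le 2(z_i^2+z_j^2)$ and the Rayleigh bound on $g_+$. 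Thus the expected cut weight is at most $\sqrt{2\ritzval}\,\expect{\vol{S_\tau}}$, and since every $S_\tau$ has volume at most $\tfrac12\vol{V}$ this ratio equals $\conductance{S_\tau}$; so some threshold realizes a sweep cut with $\conductance{S}\le\sqrt{2\ritzval}$. Undoing $\by = \Dneghalf\bx$, this $S$ is the claimed sweep cut of $\Dneghalf\bx$.

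I expect the averaging-and-Cauchy-Schwarz estimate to be the main obstacle, since this is where both the constant $\sqrt{2}$ and the square-root gap between the eigenvalue $\ritzval$ and the conductance $\conductance{\cdot}$ originate. The preparatory median shift and positive/negative-part decomposition are routine but must be handled carefully so that the reported cut sits on the smaller-volume side, guaranteeing $\pairmin{\vol{S}}{\vol{\bar S}} = \vol{S}$ and hence that the computed ratio is genuinely the conductance.
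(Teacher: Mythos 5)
Your proof is correct: the reduction to the generalized Rayleigh quotient, the degree-weighted median shift, the positive/negative-part splitting with the mediant inequality, and the random-threshold rounding with Cauchy--Schwarz together constitute the standard proof of Cheeger's inequality with the constant $\sqrt{2}$, and each step checks out (including the key facts that the shift cannot decrease the denominator because $\sum_i d_i y_i = 0$ and that the resulting set sits on the smaller-volume side). The paper itself offers no proof of \cref{thm:cheeger} --- it is cited directly from the literature --- so there is nothing to compare against; your argument is the canonical one, and the only detail worth tightening is the observation that the cut produced from $g_-$ is the complement of a sweep cut of $\Dneghalf\bx$, which has the same conductance by symmetry of $\conductance{\cdot}$.
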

When $\bx$ satisfies $\Lh\bx=\lambda_2\bx$,
    $\condG \le \conductance{\Dneghalf\bx} \le \sqrt{2\fiedval}$.
This general form of Cheeger's inequality indicates that finding low-energy Laplacian eigenvectors is sufficient for
constructing low-conductance partitions of the graph.

In graph partitioning, the goal is to compute a partition of the graph that optimizes the chosen objective.
When applying spectral methods to graph partitioning, our goal is not to compute very accurate eigenpairs, but instead to partition the vertex set of a graph correctly.
Because spectral partitioning can be used recursively to find small clusters,
  we focus on splitting a graph into two parts.
Our results on the model problem indicate that approximate eigenvectors are sufficient to solve the data analysis
problem and are much faster to compute if the graph has the right structure.

\subsection{A Model Problem}\label{sec:rocjustification}

We use a simple model (the \roc{}) to study the capabilities of spectral partitioning algorithms, form theory to
characterize performance, and potentially enhance these algorithms.   Such use of model problems is well-established in
the numerical analysis literature regarding iterative solutions to discretized partial differential equations (PDEs).
The Dirichlet Laplacian  on a unit square discretized on a Cartesian lattice is a simple problem with known eigenpairs
and is used to study the properties of various eigensolvers.    These simple model problems do not demonstrate the
algorithms perform well on real-world problems, but are incredibly important tools for algorithm development and theoretical analysis.   For spectral partitioning, the \roc{} is one candidate model problem for which we can derive complete knowledge of the eigenpairs.
In PDEs, the order of discretization error (difference between continuous solution and discrete solution) provides the solver with a stopping criterion.   In spectral graph partitioning, we do not
have this luxury, and we must develop theory to understand how perturbations in a spectral embedding impact partitioning quality.
Another reason to develop a collection of model problems
is to enable careful study of this impact in well-understood situations.

In order to provide a striking example of our improved analysis \Cref{sec:ROCstart} studies our model problem in detail.
The goal is to understand when approximate eigenvectors have sweep cuts that correctly identify graph structures.
The \roc{} has been studied  as 
  ``the most modular network'' in order to demonstrate a resolution limit in the 
  modularity maximization procedure for community detection~\cite{Fortunato2007resolutionlimit}. 
For this family of highly structured graphs, the correct partition is unambiguous.
We use the \roc{} to investigate how spectral embeddings for partitioning are affected by numerical error.
Because of the high degree of symmetry, the \roc{} allows for a thorough closed form analysis producing formulas for the eigenvectors and eigenvalues.
A sweep cut using exact eigenvectors partitions the graph with small conductance and successful recovery of all the
clusters.
We quantify the effects of approximation error on sweep cut partitions of this graph.
Our findings demonstrate that despite a small spectral gap, which implies slow convergence of non-preconditioned eigensolvers, the \roc{} is well
partitioned by low accuracy approximations to the eigenvectors.


Studying the \roc{} provides guidance for practitioners on useful tolerances for eigensolvers.
We are able to construct the smallest perturbation that induces a mistake in the sweep cut partition.
This perturbation shows that when looking for clusters of size $b$ in a general graph the eigensolver tolerance must be
smaller than $\bigo{b^{-\half}}$.
Analysis of the \roc{} provides an upper bound on the eigensolver accuracy that is 
sufficient to recover community structure.

\subsection{Contributions}\label{sec:contributions}
This paper provides the following contributions.
\Cref{sec:blends} extends a known error bound on the computation of eigenvectors to the computation of linear combinations of eigenvectors.
By extending this classical error bound to linear combinations of eigenvectors,
  we find a condition on the spectrum of where numerically accurate blends are easy to achieve.
\Cref{thm:blendspaceconductance} provides a general condition under which approximate eigenvectors preserve sweep cuts.
\Cref{sec:ROCstart} analyzes a model problem and derives necessary and sufficient error tolerances for solving the model problem, which are essentially tight for some parameter regime.
We show for the model problem where the number of clusters is polynomial in the size of the clusters, the power method takes $\bigo{1}$ iterations to identify the clusters.

\subsection{Related Work}\label{sec:relatedwork}


Iterative methods for solving eigenvector problems are well understood.
These algorithms are able to generate solutions to arbitrary approximation factors,
  but have run time which increases in the number of iterations,
  where more iterations leads to better approximations.
Iterative methods~\cite{arpack,saad2011numerical} have been shown to provide fast approximate solutions for a wide range
of problems.
Many iterative eigensolvers can be represented as the output $\by$ equals a polynomial $p$ applied to the matrix $M$ times a
vector $\bx$, $\by = p(M)\bx$.
The degree of $p$ depends on the number of iterations of the method, which is controlled by the eigenresidual tolerance
$\norm{M\by-\mu\by} < \epsilon$.
The simplest such method is the power method which is easy to analyze because $p(M)$ is always $M^k$ where $k$ is the number of iterations.   More sophisticated methods choose $p(M)$ adaptively and typically converge more quickly.
A practical implementation of the Arnoldi method can be found in~\cite{arpack}, which is commonly used in practice.

Localized eigenvectors are essential to analysis of the \roc{}.
Cucuringu and Mahoney examine the network analysis implications of localized interior eigenvectors in the spectrum of the Co-voting network of US Senators~\cite{cucuringu2011localization}.
The graph is defined 
    with connections between members of the same session of Congress who vote together on the same bills 
    and connections between individuals who are reelected to consecutive sessions.
The first 41 eigenvectors are oscillatory across the congressional sessions with little variation between the vertices in the same session,
    but the next eigenvectors are small in magnitude on most sessions but take large positive values on members of one party and large negative values on members of the other party within a few sessions.
Thus blends of the dominant eigenvectors indicate the sessions of congress.
The \roc{} also exhibits globally periodic extremal eigenvectors and localized interior eigenvectors due to its
Kronecker product structure. 
We show that the \roc{}, has a basis for an interior eigenspace with the nonzero entries of each vector completely restricted to an individual clique.
This localization allows us to show that approximate eigenvectors recover the interesting combinatorial structure.

Other work focuses on the impact of errors in measurement on the behavior of data analysis algorithms.
In the context of Gram (kernel) matrices,
Huang \etal\cite{huang2009spectral-clustering-with-perturbed-data}, studies the effect of perturbing the original data points on the spectral partitioning method.
A similar line of investigation is pursued in \cite{yan2009fastapproximate} where data points are quantized to reduce bandwidth in a distributed system.
This work connects approximation with performance.
If one can demonstrate that data analysis accuracy is not affected too much,
  then one can use an algorithm which sacrifices accuracy to improve performance.
Our paper treats the data as correctly observed and handles error in the iterative solver.

The impact of approximate numerical computing has been shown useful for several applications.
In~\cite{boutsidis2015spectralclustering} eigenvectors of a kernel matrix are approximated with the power method and then \kmeans{} is applied to these approximations.
The \kmeans{} objective function is well approximated when using approximate eigenvectors.
The bounds given in \cite{boutsidis2015spectralclustering} depend on using the $k$ \evecs{} to partition into $k$ parts
and depend on the $k$th spectral gap to control accuracy of approximation.
Experiments also show that \kmeans{} on the approximate eigenvectors is faster and sometimes more accurate in terms of Normalized Mutual Information (NMI) compared to using exact eigenvectors.
Our paper focuses on partitioning into two clusters based on sweep cuts of a single approximate eigenvector and makes a
rigorous analysis of a model problem in order to understand how the numerical accuracy interacts with combinatorial
structure of the data clusters.
Pothen \etal{} \cite{pothen1990partitioning}, which used spectral partitioning for distributed memory sparse matrix computation, recognized the value of low-accuracy solutions.
Approximate spectral coordinates are used to reorder matrices before conducting high accuracy linear solves.
Our paper contributes to the understanding of how numerical approximation accuracy contributes to data analysis accuracy.

\renewcommand{\amat}{M}

\section{Blends of Eigenvectors}\label{sec:blends}

In order to understand the relationship between eigensolver error and graph partitioning, we study error bounds and the effect of pointwise error on the sweep cut procedure.
Theorems~\ref{thm:singleevecerror} and~\ref{thm:blenderror} bound the error to a subspace in terms of the residual and quantities derived from the eigenvalues of the matrix.
This control over the error is then used in \Cref{thm:blendspaceconductance} to relate eigenresidual to the conductance of a sweep cut of the graph.
These results apply to general matrices. 
Although a small spectral gap implies poor control on the error to a single eigenvector,
we derive a condition where low accuracy approximations effectively partition the graph.
Section~\ref{sec:ROCstart} applies these theorems to a special family of graphs to show that blends are faster to
compute and provide nearly optimal partitions.


\subsection{Converging to a Single Eigenspace}\label{sec:convergesingle}
Let $\amat \in \mathbb{R}^{n \times n}$, $\amat = \amat^t$ be a general symmetric matrix.
Consider the solutions to the equation $\amat \bv = \lambda \bv$.
Because $\amat$ is symmetric, there are $n$ eigenvalues in $\mathbb{R}$ (counting multiplicities).
The set of all eigenvalues is the spectrum $\lambda(\amat)$, which we order decreasingly as $\lambda_1 \geq \lambda_2 \geq \cdots \geq \lambda_n$.
For $k=1,\dots,n$,  let $\bv_k$ be an eigenvector associated with $\lambda_k$, $\amat \bv_k = \lambda_k \bv_k$, such that $\bv_k^t \bv_l = 0$ whenever $l \neq k$.
Define the eigenspace associated with $\lambda_k$ as the invariant subspace associated with $\lambda_k$, that is
\[
\cX_k := \left\{ \, \bx \in \mathbb{R}^n \,  : \,  \amat \bx = \lambda_k \bx \, \right\}.
\]
These definitions imply $\mbox{dim}(\cX_k) = \mbox{mult}(\lambda_k)$ and $\cX_k = \cX_l$ when $\lambda_k = \lambda_l$.

\begin{rem}
The results in this section are stated and proved in terms of generic symmetric matrix $\amat$ because they apply beyond spectral graph theory.
Spectral partitioning methods use the \evecs{} of $\Lh = I-\ahat$.
Counting eigenvalues from largest to smallest starting with 1, we see $\lambda_2\paren{\ahat} = \lambda_{n-1}\paren{\L}$ with the same eigenvectors.
Letting $\bk = D^{1/2} \bfo\norm{D^{1/2} \bfo}^{-1}$, the normalized eigenvector of $\ahat$ associated with $\lambda_1\paren{\ahat}=1$, one can use $\amat = \ahatshifted$ in the results of this section.
Subtracting $\bk \bk^t$ moves the eigenvalue $1$ to $0$, or $\bk$ is an eigenvector associated with $0 \in \lambda\paren{\ahatshifted}$.
Thus, for this $\amat$ and for $k$ where $\lambda_k\paren{\amat}>0$, we have
$$
  \lambda_k\paren{\amat} = \lambda_{k+1}\paren{\ahat}.
$$
In particular, for the Fiedler eigenvalue, $\lambda_1\paren{\amat} = \lambda_{2}\paren{\ahat}=1-\lambda_{n-1}\paren{\Lh}$,
 and the Fiedler vectors in the associated eigenspace correspond to extremal eigenvalue $\lambda_1\paren{\amat}$.
Computationally, implementations of iterative methods approximating the eigenvectors of $\lambda_2\paren{\ahat}$ perform
better with a routine applying the operator $\ahatshifted$.
\end{rem}

Let $(\bx, \mu)$, $\bx \in \mathbb{R}^n$, $\mu \in \mathbb{R}$, be an approximate eigenpair of $\amat$ with $\norm{\bx}
= 1$ and $\ritzval = \ray{\bx}{\amat}$, the {\em \rayquot{}}, which minimizes the function $\norm{\amat\bx - \theta \bx}$ over all real values $\theta$.   Define the two-norm of the eigenresidual as $\epsilon = \norm{\amat \bx - \mu \bx}$.
As in~\cite{parlett1998symevp}, we have a simple eigenvalue bound.
By decomposing $\bx$ in the eigenbasis $\bx = \sum_{k=1}^n \alpha_k \bv_k$, we see
$$
  \epsilon^2 = \| \amat \bx - \mu \bx \|^2 = \sum_{k=1}^n \alpha_k^2 (\lambda_k - \mu)^2 \geq
  \left( \sum_{k=1}^n \alpha_k^2 \right) \left(\min_{1 \leq k \leq n} (\lambda - \mu)^2 \right)
$$

meaning there exists an eigenvalue $\lambda_k$ within $\epsilon$ of $\mu$,

$$
  \min_{1\leq k \leq n} |\lambda_k - \mu| \leq \epsilon.
$$

Also in~\cite{parlett1998symevp}, we have bounds estimating convergence to an eigenspace in angle.
Define the eigengap for $\lambda_k$ as $\delta_k = \min_{\lambda \in \lambda\paren{\amat} \setminus \lambda_k} \abs{\lambda_k - \lambda}$.   Moreover, if $\epsilon$ is small compared to $\delta_k$ there exists a normalized eigenvector $\bv \in \cX_k$ with which $\bx$ has a small angle,

$$
  \min_{\bv \in \cX_k} \sqrt{1 - \left< \, \bx, \, \bv \,\right> ^2} \leq \frac{\epsilon}{\delta_k}.
$$

Instead of presenting a proof of this well-known result, we derive a similar bound for $\ell_2$ and point-wise approximation to an eigenspace associated with an extremal eigenvector.

\newcommand{\closestmucondition}{|\mu - \lambda_1| < \min_{\lambda \in \lambda(\amat) \setminus \lambda_1} |\mu - \lambda|}
\begin{thm}\label{thm:singleevecerror}
Consider approximate eigenpair $(\bx, \mu)$ of symmetric $\amat \in \mathbb{R}^{n \times n}$ with $\norm{\bx} = 1$ and $\mu = \ray{\bx}{\amat}$.
Assume $$\closestmucondition.$$
Given eigenresidual $\epsilon$ and eigengap $\delta_1$, there exists an eigenvector $\bv \in \cX_1$ , with $\norm{\bv}=1$, and error bound
$$\norm{\bx - \bv} \leq \frac{\sqrt{8}\epsilon}{\delta_1}.  $$
\end{thm}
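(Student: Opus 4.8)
The plan is to expand $\bx$ in an orthonormal eigenbasis and argue that a small residual forces almost all the mass of $\bx$ into the top eigenspace $\cX_1$. Writing $\bx = \sum_{k=1}^n \alpha_k \bv_k$ with $\sum_k \alpha_k^2 = 1$, I would split the indices according to whether $\lambda_k = \lambda_1$ and set $c^2 = \sum_{k : \lambda_k = \lambda_1} \alpha_k^2$ and $s^2 = \sum_{k : \lambda_k \neq \lambda_1} \alpha_k^2 = 1 - c^2$, so that $c$ is the length of the orthogonal projection of $\bx$ onto $\cX_1$. The candidate eigenvector $\bv$ is that projection rescaled to unit length and oriented so that $\left< \bx, \bv \right> = c \geq 0$ (in the degenerate case $c = 0$ any unit vector of $\cX_1$ will do).

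First I would reduce the claim to a bound on $s$. A short computation gives $\norm{\bx - \bv}^2 = 2(1-c)$, and from $c = \sqrt{1-s^2}$ one gets $1 - c \leq s^2$, so $\norm{\bx - \bv}^2 \leq 2 s^2$; it then suffices to prove $s \leq 2\epsilon / \delta_1$. The route to this is the residual identity $\epsilon^2 = \norm{\amat\bx - \mu\bx}^2 = \sum_k \alpha_k^2 (\lambda_k - \mu)^2$, keeping only the terms with $\lambda_k \neq \lambda_1$ and bounding each factor $(\lambda_k - \mu)^2$ from below.

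The key step — and the only place the hypotheses are used — is this per-term lower bound. For $\lambda_k \neq \lambda_1$, the closest-eigenvalue assumption $|\mu - \lambda_1| \leq |\mu - \lambda_k|$ and the triangle inequality give $|\lambda_k - \lambda_1| \leq |\lambda_k - \mu| + |\mu - \lambda_1| \leq 2|\lambda_k - \mu|$, whence $|\lambda_k - \mu| \geq \frac{1}{2}|\lambda_k - \lambda_1| \geq \frac{1}{2}\delta_1$ by definition of the eigengap. Substituting yields $\epsilon^2 \geq \frac{1}{4}\delta_1^2 s^2$, i.e. $s \leq 2\epsilon/\delta_1$, and chaining with the reduction above gives $\norm{\bx-\bv}^2 \leq 2s^2 \leq 8\epsilon^2/\delta_1^2$. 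I expect this per-term bound to be the crux: spending the factor of two here (rather than using the sharper $\delta_1 - |\mu - \lambda_1|$) is precisely what lets the argument avoid any smallness assumption on $\epsilon$, at the cost of the constant $\sqrt{8}$. Everything else is routine linear algebra.
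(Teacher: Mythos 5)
Your proof is correct and follows essentially the same route as the paper's: decompose $\bx$ in the eigenbasis, use the closest-eigenvalue hypothesis plus the triangle inequality to get the per-term bound $|\lambda_k - \mu| \geq \delta_1/2$ off $\cX_1$, and convert the resulting bound on the orthogonal mass into $\norm{\bx-\bv}^2 \leq 2(1-c) \leq 2(1-c^2)$. The only difference is cosmetic (you compute $\norm{\bx-\bv}^2 = 2(1-c)$ directly where the paper uses the Pythagorean split through $\alpha\bv$), and you make explicit the triangle-inequality step that the paper leaves implicit.
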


\begin{proof}
Let $\alpha \bv$ be the closest vector in $\cX_1$ to $\bx$.
Decompose $\bx$ into its eigenvector components within $\cX_1$ and perpendicular to $\cX_1$, $\bx = \alpha \bv + \sum_{\bv_k \perp \cX_1} \alpha_k \bv_k$.
Because $\mu$ is closer to $\lambda_1$ than any other eigenvalue, we have
\beqas
\epsilon^2 & = & |\lambda_1- \mu|^2 \alpha^2 + \sum_{\bv_k \perp \cX_k}  |\lambda_k - \mu|^2 \alpha^2_k\
\geq \sum_{\bv_k \perp \cX_k}  |\lambda_k - \mu|^2 \alpha^2_k
\geq \frac{\delta_1^2}{4} \sum_{\bv_k \perp \cX_k}  \alpha^2_k. \\
\eeqas
Rearranging gives
\beqas
\frac{4 \epsilon^2}{\delta_1^2} & \geq & \sum_{\bv_k \perp \cX_k}  \alpha^2_k = 1- \alpha^2. \\
\eeqas

\beqas
\|\bx-\bv\|^2 & = & \|\bx - \alpha\bv \|^2 + \|\bv - \alpha \bv\|^2
\leq  \|\bx - \alpha\bv \|^2 + (1 - \alpha)^2
\leq  2(1-\alpha^2)
\leq  \frac{8 \epsilon^2}{\delta_1^2} ,
\eeqas
\end{proof}

This result implies,
$$\frac{1}{n} \sum_{i\in\cV} |x_i - v_i|^2 \leq \|\bx - \bv\|^2 \leq \frac{8 \epsilon^2}{n \delta_1^2}  $$
so if $\epsilon^2 / n$ is small compared to the $\delta_1^2$, then the average average error squared is also small.
Moreover we have a point-wise error bound,
$$\max_{i\in\cV} |x_i - v_i| \leq \|\bx - \bv\| \leq \frac{\sqrt{8}\epsilon}{\delta_1}   $$

For a large graph, it is typical that Fiedler eigenvalue is so close to the next-to-largest eigenvalue, that the error bounds demand an extremely small eigenresidual for convergence.   
Note that this error analysis is independent of the algorithm used to compute the solution.
Choosing $\bx=\bv_3$ shows that the condition $\closestmucondition$ is necessary. 
Thus for matrices $\amat$ with small $\spectralgap$, we need to remove this condition.
When the $\spectralgap$ is small, a reasonable number of iterations of an eigensolver may
produce a vector $\bx$ with $\ray{\bx}{\amat}$ close to $\lambda_1$ which may not be very close to the true extremal eigenspace.
In this case we examine convergence to a linear combination of eigenvectors associated with a range of eigenvalues.

\subsection{Converging to a Subspace Spanned by Multiple Eigenspaces}\label{sec:convergeblends}
\newcommand{\blendgap}{\delta_{p,q}(\mu)}
\newcommand{\blendgapdef}{\min_{\{ k < p\} \cup \{ k > q \}} |\lambda_k - \mu|}


This section generalizes the previous error bound to the distance between $\bx$ and a subspace spanned by the eigenvectors associated with a range of eigenvalues.
Assume that linear combinations of eigenvectors associated with a range of eigenvalues $\blendinterval$ are satisfactory for subsequent data analysis algorithms.
If the \rayquot{} is within $\blendinterval$ and the eigenresidual is smaller than the distance between $\mu$ and any eigenvalue outside $\blendinterval$, then the following theorem holds.

\begin{thm}\label{thm:blenderror}
Consider approximate eigenpair $(\bx, \mu)$ of symmetric $\amat \in \mathbb{R}^{n \times n}$  with $\|\bx\| = 1$ and $\mu = \bx^t \amat \bx \in \blendinterval$.
Define
\[
\blendgap = \blendgapdef
\qquad \mbox{and} \qquad
\cX_p^q := \bigotimes_{k=p}^q \cX_k.
\]
Given eigenresidual $\epsilon = \|\amat \bx - \mu \bx \| \le \pairmin{\mu-\lambda_{q-1}}{\lambda_{p+1}-\mu}$
there exists a vector $\bv \in \cX_p^q$, with $\|\bv\|=1$,
$\ell_2$ error bound,
\[
\|\bx - \bv\| \leq \frac{\sqrt{2} \epsilon}{\delta_{p,q}(\mu)},
\]
\end{thm}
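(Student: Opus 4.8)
The plan is to adapt the argument of \Cref{thm:singleevecerror} almost verbatim, replacing the single extremal eigenspace $\cX_1$ by the blend space $\cX_p^q$ and the eigengap $\delta_1$ by the $\mu$-centered gap $\delta_{p,q}(\mu)$. First I would expand $\bx$ in the orthonormal eigenbasis, $\bx = \sum_{k=1}^n \alpha_k \bv_k$, and split the index set into those $k$ inside the blend window $p \le k \le q$ and those outside it. Writing $\alpha^2 = \sum_{k=p}^q \alpha_k^2$ for the mass of $\bx$ captured by $\cX_p^q$, I would take the candidate vector to be the normalized projection onto the blend space, $\bv = \alpha^{-1}\sum_{k=p}^q \alpha_k \bv_k$, which is the unit vector of $\cX_p^q$ closest to $\bx$.

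The arithmetic heart is the residual identity, valid because the $\bv_k$ diagonalize $\amat$, together with dropping the nonnegative in-window terms and bounding each surviving factor below by the definition of the gap: every $\lambda_k$ with $k \notin \{p,\ldots,q\}$ satisfies $|\lambda_k - \mu| \ge \delta_{p,q}(\mu)$. This yields
\[
\epsilon^2 = \sum_{k=1}^n \alpha_k^2 (\lambda_k - \mu)^2 \ge \sum_{k \notin \{p,\ldots,q\}} \alpha_k^2 (\lambda_k - \mu)^2 \ge \delta_{p,q}(\mu)^2 \sum_{k \notin \{p,\ldots,q\}} \alpha_k^2 = \delta_{p,q}(\mu)^2 (1 - \alpha^2),
\]
hence $1 - \alpha^2 \le \epsilon^2/\delta_{p,q}(\mu)^2$. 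This is exactly where the constant improves over \Cref{thm:singleevecerror}: there the gap is measured from the eigenvalue $\lambda_1$, forcing the loss $|\lambda_k - \mu| \ge \delta_1/2$, whereas here $\delta_{p,q}(\mu)$ is measured directly from $\mu$, so no factor of two is incurred.

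To finish I would assemble the error using orthogonality. The out-of-window part $\bx - \alpha\bv = \sum_{k \notin \{p,\ldots,q\}}\alpha_k\bv_k$ lies in the orthogonal complement of $\cX_p^q$ while $(\alpha - 1)\bv$ lies in $\cX_p^q$, so $\|\bx - \bv\|^2 = \|\bx - \alpha\bv\|^2 + (1-\alpha)^2 = (1-\alpha^2) + (1-\alpha)^2$. Since $0 \le \alpha \le 1$ we have $(1-\alpha)^2 \le 1 - \alpha^2$, giving $\|\bx-\bv\|^2 \le 2(1-\alpha^2) \le 2\epsilon^2/\delta_{p,q}(\mu)^2$, the claimed $\|\bx - \bv\| \le \sqrt{2}\,\epsilon/\delta_{p,q}(\mu)$.

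I do not expect a genuine analytic obstacle here; the work is essentially bookkeeping with the decreasing eigenvalue ordering. The one place to be careful is confirming that the eigenvalues bracketing the window are $\lambda_{p-1}$ above and $\lambda_{q+1}$ below, so that $\delta_{p,q}(\mu) = \min(\lambda_{p-1} - \mu,\ \mu - \lambda_{q+1})$ and the residual hypothesis is precisely $\epsilon \le \delta_{p,q}(\mu)$. That hypothesis, combined with $\mu \in \blendinterval$, is what guarantees $1 - \alpha^2 \le \epsilon^2/\delta_{p,q}(\mu)^2 \le 1$ is a nonvacuous bound with $\alpha > 0$, so the normalized projection $\bv$ is well defined and attains the stated distance.
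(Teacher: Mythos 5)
Your proposal is correct and follows essentially the same route as the paper: expand $\bx$ in the eigenbasis, take $\bv$ to be the normalized orthogonal projection onto $\cX_p^q$, bound the out-of-window mass by $\epsilon^2/\delta_{p,q}(\mu)^2$ via the residual identity, and combine the two orthogonal error components. The only cosmetic difference is in the final step, where you bound $(1-\alpha)^2 \leq 1-\alpha^2$ directly (as in the paper's proof of Theorem~\ref{thm:singleevecerror}) while the paper's proof of this theorem uses the inequality $1-\sqrt{1-a} \leq \sqrt{a}$; both yield the same constant $\sqrt{2}$.
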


\begin{proof}
Let
$\bx = \sum_{k=1}^n \alpha_k \bv_k$ and $\Pipq \bx = \sum_{k=p}^q \alpha_k \bv_k$, the $\ell_2$-orthogonal projection onto $\cX_p^q$.   Note $\sum_{k=1}^n \alpha_k^2 = 1$.    In the case where $\|\Pipq \bx\| = 1$, we can let $\bv = \bx$ and see the bound is clearly satisfied.   For $\| \Pipq \bx \| < 1$, we first demonstrate that $\|\bx - \Pipq \bx\|$ is controlled by $\epsilon,$

\beqas
\epsilon^2 & = & \|\amat \bx - \mu \bx \|^2  \\
\epsilon^2 & = & \sum_{k=1}^n \alpha_k^2 |\lambda_k - \mu|^2 \\
\epsilon^2 & = & \sum_{\{k< p\} \cup \{k > q\}}  \alpha_k^2 |\lambda_k - \mu|^2 + \sum_{k=p}^q \alpha_k^2 |\lambda_k - \mu|^2 \\
\epsilon^2 & \geq &  \delta_{p,q}(\mu)^2 \sum_{\{k< p\} \cup \{k > q\}}  \alpha_k^2  + \sum_{k=p}^q \alpha_k^2 |\lambda_k - \mu|^2 \\
\epsilon^2 & \geq &  \delta_{p,q}(\mu)^2 \sum_{\{k< p\} \cup \{k > q\}}  \alpha_k^2  \\
\epsilon^2 & \geq &  \delta_{p,q}(\mu)^2 \left( 1 - \sum_{k=p}^q \alpha_k^2 \right)  \\
\frac{\epsilon}{\delta_{p,q}(\mu)} & \geq & \|\bx - \Pipq \bx \|.
\eeqas

There is a unit vector in $\cX_p^q$ that is also within some factor of $\epsilon$ to $\bx$.   Let $\bv = \Pipq \bx / \|\Pipq \bx\|$, then $ \|\bv - \Pipq \bx \| = 1 - \|\Pipq \bx \| $.   We have

\beqas
\| \Pipq \bx \|^2 + \|(I - \Pipq) \bx \|^2 & = & 1\\
\| \Pipq \bx \|^2  & = & 1 - \|(I - \Pipq) \bx \|^2 \\
\| \Pipq \bx \|^2 & \geq & {1 - \frac{\epsilon^2}{\delta_{p,q}(\mu)^2}} \\
\| \Pipq \bx \| & \geq & \sqrt{1 - \frac{\epsilon^2}{\delta_{p,q}(\mu)^2}}.
\eeqas
Using the inequality $a \leq \sqrt{a}$ for $a \in (0,1)$, we see
\[
\|\bv - \Pipq \bx \|
  = 1 - \|\Pipq \bx \|  \leq  1 - \sqrt{1 - \frac{\epsilon^2}{\delta_{p,q}(\mu)^2}}
  \leq \frac{\epsilon}{\delta_{p,q}(\mu)}.
\]
Then, because $(\bx - \Pipq \bx)^t (\Pipq \bx - \bv) = 0$, we have
\beqas
\|\bx - \bv\|^2 =
\|\bx - \Pipq \bx\|^2 +
\|\Pipq \bx - \bv\|^2  \leq \frac{2 \epsilon^2}{\delta_{p,q}(\mu)^2}.
\eeqas


\end{proof}


\begin{rem}
Note that the size of the {\em blend gap}, $\blendgap$, is dependent on
\begin{inparaenum}[\itshape i\upshape)]
\item the size of $|\lambda_p - \lambda_{q}|$,
\item how internal \rayquot{} $\mu$ is within $\blendinterval$,
\item and how far the exterior eigenvalues are, $|\lambda_q - \lambda_{q+1}|$ and $|\lambda_{p-1} - \lambda_p|$.
\end{inparaenum}
For a problem where the spectrum is not known \apriori{}, it is difficult to state an acceptable interval $\blendinterval$ for accomplishing a given data mining task.
\Cref{sec:ROCepairs} provides an example where one can choose $\blendinterval$ \apriori.
The Congress graph has $\lambda_{41}-\lambda_{42} \ge 0.4$ and the first $41$ eigenvectors indicate the natural clustering of the graph into sessions\cite{cucuringu2011localization}. This analysis thus applies to this real world network.
\end{rem}

For our application, $p=1$ and $\delta_{pq}=\mu - \lambda_{q+1}\paren*{\ahatshifted} \ge \lambda_q - \lambda_{q+1}$,
which can be much larger than the spectral gap $\lambda_1\paren{\ahatshifted} - \lambda_2\paren{\ahatshifted}$.
In Section~\ref{sec:convergesingle} the goal of computation is a single eigenvector and the output of the approximation is a blend of eigenvectors, the coefficients of the output in the eigenbasis of the matrix describes the error introduced by approximation.
In Section~\ref{sec:convergeblends} the goal of computation is a blend of eigenvectors, and
 we improve the error bound when the spectral gap is small.

\newcommand{\blenderrorRHS}{\frac{\sqrt{2}\epsilon}{\delta{pq}(\mu)}}

\newcommand{\errs}{\bz}
\newcommand{\errsprime}{\errs^\prime}
\newcommand{\err}[1]{z_{#1}}
\renewcommand{\ones}{\mathbf{1}}
\newcommand{\means}{\boldsymbol{\mu}}
\newcommand{\mean}[1]{\mu_{#1}}
\newcommand{\tol}{\epsilon}
\newcommand{\blendspace}{Span\{\bv_1\dots \bv_q\}}
\renewcommand{\half}{\frac{1}{2}}
\newcommand{\block}[1]{{\cal{B}}_{#1}}
\newcommand{\minperb}{\sqrt{2qn}}
\newcommand{\minperbsq}{{2qn}}
\newcommand{\minperbfrac}{\paren{1+2qn}^{-\half}}
\newcommand{\blockconstspace}{{\cal{W}}}
\newcommand{\bI}{{\mathbf{I}}}
\newcommand{\Proj}[1]{{\mathbf{P}_{#1}}}
\newcommand{\indicator}[2]{\be_{#1}^{#2}}
\newcommand{\minelt}[1]{\min_{i}{#1}_i}
\newcommand{\maxelt}[1]{\max_{i}{#1}_i}
\newcommand{\blockindicator}[2]{\be_{\block{#1}}}
\newcommand{\minperbupperbound}{b^{-\half}}
\newcommand{\robustness}[2]{g_{#1}\paren{#2}}
\newcommand{\gv}{g_\bv}
\newcommand{\gvt}{\robustness{\bv}{t}}

In order to relate the numerical accuracy to the conductance for general graphs we examine the impact of pointwise error on sweep cuts.
For any prescribed conductance value $\psi$, we derive a condition
on vectors $\bv$ such that we can guarantee that small perturbations of $\bv$ have conductance less than or equal to $\psi$.
Let $\sweepcut{\bv}{t}$ represent the sweep cut of $\bv$ at $t$ as in \cref{eqn:sweepcut}.
\begin{lemma}\label{thm:generallinftybound}
For any graph $G$, vector $\bv\in\R^n$ and scalar $\psi>0$,
define $T_{\psi}\paren{\bv}=\set{t\mid \conductance{S_\bv(t)} \le \psi}$.
Let $\gvt = \min_i \abs{v_i-t}$ and $\gv = \max_{t\in T_{\psi}\paren{\bv}} \gvt$.
If $\norm{\bz}_\infty < \gv$, then $\conductance{\bv+\bz} \le \psi$.
\end{lemma}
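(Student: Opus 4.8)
The plan is to produce one threshold that certifies low conductance for $\bv$ and for every admissible perturbation simultaneously. First I would let $t^\ast$ be a maximizer in the definition of $\gv$, so that $t^\ast \in T_\psi(\bv)$, which gives $\conductance{\sweepcut{\bv}{t^\ast}} \le \psi$, and $\robustness{\bv}{t^\ast} = \gv$. The whole argument then rests on showing that the sweep cut of the perturbed vector at this \emph{same} threshold is the identical vertex set, $\sweepcut{\bv+\bz}{t^\ast} = \sweepcut{\bv}{t^\ast}$. Since conductance is a function of the cut set alone, the two cuts then have equal conductance, and
\[
\conductance{\bv+\bz} = \min_t \conductance{\sweepcut{\bv+\bz}{t}} \le \conductance{\sweepcut{\bv+\bz}{t^\ast}} = \conductance{\sweepcut{\bv}{t^\ast}} \le \psi,
\]
which is the claim.

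The key estimate is that the perturbation is too small to move any coordinate onto or across $t^\ast$. For each $i$,
\[
\abs{z_i} \le \norm{\bz}_\infty < \gv = \robustness{\bv}{t^\ast} = \min_j \abs{v_j - t^\ast} \le \abs{v_i - t^\ast}.
\]
In particular $v_i \neq t^\ast$ for all $i$ (note $\gv > 0$, since $\norm{\bz}_\infty \ge 0$ forces $\gv > \norm{\bz}_\infty \ge 0$), so each coordinate sits strictly on one side of $t^\ast$. If $v_i > t^\ast$, then $v_i - t^\ast = \abs{v_i - t^\ast} > \abs{z_i} \ge -z_i$, hence $v_i + z_i > t^\ast$; if $v_i < t^\ast$, then $t^\ast - v_i > \abs{z_i} \ge z_i$, hence $v_i + z_i < t^\ast$. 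Thus $i \in \sweepcut{\bv+\bz}{t^\ast}$ if and only if $i \in \sweepcut{\bv}{t^\ast}$, i.e.\ the two sweep cuts coincide exactly, closing the chain of inequalities above.

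I expect the only genuine subtlety to be the well-definedness of the maximum defining $\gv$, which I would dispatch as follows. As $t$ varies, the set $\sweepcut{\bv}{t}$ — and therefore its conductance — changes only at the values $t = v_i$ and is constant on each interval between consecutive order statistics of $\bv$; hence $T_\psi(\bv)$ is a finite union of such intervals. On each of these the map $t \mapsto \robustness{\bv}{t} = \min_i \abs{v_i - t}$ is continuous and piecewise linear (a sawtooth peaking at the interval midpoint), so its supremum over the finite union $T_\psi(\bv)$ is attained, justifying the use of $\max$ and the existence of $t^\ast$. With that technical point settled, the remainder is just the elementary sign analysis above, so I anticipate no serious obstacle; the conceptual crux is simply the observation that the same threshold $t^\ast$ may be reused for $\bv+\bz$ precisely because $\gv$ measures the clearance around $t^\ast$.
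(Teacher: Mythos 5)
Your proposal is correct and follows essentially the same route as the paper's proof: fix a threshold $t^\ast$ witnessing $\gv$, observe that $\norm{\bz}_\infty < \min_i\abs{v_i - t^\ast}$ prevents any coordinate from crossing $t^\ast$, so $\sweepcut{\bv+\bz}{t^\ast} = \sweepcut{\bv}{t^\ast}$, and conclude by minimality over thresholds. Your added remark on the attainment of the maximum defining $\gv$ is a small technical point the paper leaves implicit, but the core argument is identical.
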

\begin{proof}
If $S_\bv(t) = S_{\bv+\bz}(t)$ we can apply 
$\conductance{\bv+\bz} 
    < \conductance{\sweepcut{\bv+\bz}{t}}
    = \conductance{\sweepcut{\bv}{t}} 
    \le \psi$. 
$\sweepcut{\bv}{t} = \sweepcut{\bv+\bz}{t}$ if and only if $sign(v_i+z_i-t) = sign(v_i-t)$ for all $i$.
By checking cases, one can see that $\norm{\bz}_\infty < \min_i \abs{v_i-t}$ 
is sufficient to guarantee that $v_i+z_i-t$ has the same sign as $v_i-t$.
\end{proof}
Note that \cref{thm:generallinftybound} is not a necessary condition as $\bz=\half \paren{\bv-t\ones}$ is a much larger
perturbation of $\bv$ such that $\conductance{\bv+\bz} \le \psi$.
\Cref{thm:generallinftybound} defines $\gv$ as a measure of sensitivity of a single vector with respect to preserving
sweep cuts of conductance less than or equal to $\psi$.
For vectors $\bv$ with small $\gv$, a small perturbation can disrupt the sweep cuts which achieve conductance less than
$\psi$.
By defining the sensitivity of an invariant subspace appropriately,
\cref{thm:blendspaceconductance} provides a path to deriving
a residual tolerance for arbitrary graphs.
Denote by $d_{min}, d_{max}$ the minimum and maximum degree of $G$.
\newcommand{\dminmax}{\sqrt{\frac{d_{min}}{d_{max}}}}
\newcommand{\dmaxmin}{\sqrt{\frac{d_{max}}{d_{min}}}}
\begin{thm}\label{thm:blendspaceconductance}
    Let $G$ be a graph and $\psi > 0$. 
    Define $V=\spann{\set{\Dneghalf \bv_1 \dots \Dneghalf\bv_q}}$, where for $j\in\set{1\dots q}$ $\Ah\bv_j = \lambda_q \bv_j$ and $\bv_j$ are orthogonal.
    For any vector $\bx$, let $\mu = \bx^t\Ah\bx$ and $\blendgap=\blendgapdef$. 
    For any $\bq \in V$, let $\gv$ be defined as in \cref{thm:generallinftybound}.
    Define $g=\min_{\bv\in V, \norm{\bv}_2 = 1} g_\bv$.
    If  $\norm{\Ah\bx - \mu\bx} < \frac{1}{\rtwo}\dminmax\blendgap g$, then $\conductance{\Dneghalf\bx} \le \psi$.
\end{thm}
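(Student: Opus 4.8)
The plan is to reduce the statement to two results already in hand: the blend error bound of \cref{thm:blenderror}, which controls how far $\bx$ sits from the exact blend subspace in the $\ell_2$ sense, and the sweep-cut stability lemma \cref{thm:generallinftybound}, which controls conductance under $\ell_\infty$ perturbations. The only real work is translating between two coordinate systems: \cref{thm:blenderror} measures accuracy in the eigenvectors of $\Ah$, whereas the sweep cut and the sensitivity $g$ live after applying $\Dneghalf$, in the space $V$. First I would apply \cref{thm:blenderror} with $\amat = \Ah$ and $p=1$ (so that $\blendgap = \blendgapdef$ reduces to $\min_{k>q}|\lambda_k-\mu|$ and $\cX_1^q = \spann\set{\bv_1,\dots,\bv_q}$), obtaining a unit vector $\bv \in \cX_1^q$ with $\norm{\bx-\bv} \le \sqrt{2}\,\epsilon/\blendgap$, where $\epsilon = \norm{\Ah\bx - \mu\bx}$. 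This step silently uses the hypotheses $\mu \in \blendinterval$ and $\epsilon \le \blendgap$, the latter of which follows from the assumed residual bound since the multiplier $\frac{1}{\rtwo}\dminmax\,g < 1$.

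Next I would decompose the rescaled vector as $\Dneghalf\bx = \Dneghalf\bv + \bz$ with $\bz = \Dneghalf(\bx-\bv)$, noting $\Dneghalf\bv \in V$ by the definition of $V$. The goal becomes showing $\norm{\bz}_\infty < g_{\Dneghalf\bv}$, so that \cref{thm:generallinftybound} applied to the base vector $\Dneghalf\bv$ and perturbation $\bz$ yields $\conductance{\Dneghalf\bx} = \conductance{\Dneghalf\bv + \bz} \le \psi$. For the upper bound I would use $\norm{\bz}_\infty \le \norm{\bz}_2 \le \norm{\Dneghalf}_2\,\norm{\bx-\bv}_2$ together with $\norm{\Dneghalf}_2 = d_{min}^{-1/2}$ (since $\Dneghalf$ is diagonal with largest entry $d_{min}^{-1/2}$), giving $\norm{\bz}_\infty \le d_{min}^{-1/2}\sqrt{2}\,\epsilon/\blendgap$.

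For the lower bound on $g_{\Dneghalf\bv}$ I would first record that $g$ is positively homogeneous of degree one: because $\sweepcut{c\bw}{ct} = \sweepcut{\bw}{t}$ for $c>0$, the threshold set scales as $T_\psi(c\bw) = c\,T_\psi(\bw)$, and hence $g_{c\bw} = c\,g_\bw$. Writing $\bw = \Dneghalf\bv$ and $c = \norm{\bw}_2$, the normalized vector $\bw/c$ is a unit vector in $V$, so $g_{\bw/c} \ge g$; homogeneity then gives $g_{\Dneghalf\bv} = c\,g_{\bw/c} \ge \norm{\Dneghalf\bv}_2\,g \ge d_{max}^{-1/2}\,g$, using $\norm{\Dneghalf\bv}_2 \ge d_{max}^{-1/2}\norm{\bv}_2 = d_{max}^{-1/2}$. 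Combining the two bounds, the sufficient condition $\norm{\bz}_\infty < g_{\Dneghalf\bv}$ is implied by $d_{min}^{-1/2}\sqrt{2}\,\epsilon/\blendgap < d_{max}^{-1/2}\,g$, which rearranges exactly to the hypothesis $\epsilon < \frac{1}{\rtwo}\dminmax\,\blendgap\,g$, and the strictness propagates through so that \cref{thm:generallinftybound} applies.

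The main obstacle is the careful bookkeeping of the norm distortion introduced by $\Dneghalf$: the factor $\dminmax$ in the conclusion is precisely the product of the $d_{min}^{-1/2}$ coming from bounding $\norm{\bz}_\infty$ from above and the $d_{max}^{-1/2}$ coming from bounding $g_{\Dneghalf\bv}$ from below, so the argument must keep these two rescalings straight and pair each with the correct extreme degree. The homogeneity observation for $g$, though elementary, is the conceptual crux, since it lets the minimum of $g$ over unit vectors in $V$ control the sensitivity of the generally non-unit vector $\Dneghalf\bv$.
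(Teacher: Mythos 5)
Your proposal is correct and follows essentially the same route as the paper: apply \cref{thm:blenderror} to get a nearby unit vector in the blend subspace, pass through $\Dneghalf$ while tracking the $d_{min}^{-1/2}$ and $d_{max}^{-1/2}$ distortions, and invoke \cref{thm:generallinftybound}. The only cosmetic difference is that you handle the normalization via explicit positive homogeneity of $g_\bv$, whereas the paper divides the perturbation by $\norm{\Dneghalf\bq}$ up front; your version makes the implicit scale-invariance slightly more explicit but is otherwise the same argument.
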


\begin{proof}
    By \cref{thm:blenderror} applied to $\bx$, there is a unit vector $\bq\in \blendspace$ such that $\norm{\bx -\bq}_\infty \le \norm{\bx-\bq}_2 < \dminmax g$.
    Define
    $\bz = \paren{\Dneghalf\bx - \bv}\norm{\bv}^{-1}$, where $\bv = \Dneghalf\bq\in V$.
    By scaling and normalizing we see 
    $$\norm{\bz}_2 = \frac{\norm{\Dneghalf\bx - \Dneghalf\bq}_2}{\norm{\Dneghalf\bq}_2} < \sqrt{\frac{d_{max}}{d_{min}}}\norm{\bx-\bq} < g $$
    Since $$\norm{\bz}_\infty < g = \min_{\bv\in V, \norm{\bv}_2 = 1} \max_{t\in T_\bv}\min_{i\in\set{1\dots n}}\abs{v_i-t} < \gv,$$
    \cref{thm:generallinftybound} implies $\conductance{\Dneghalf\bx} \leq \psi$.
\end{proof}
If one can bound the value of $g$ from below, then this theorem gives a residual tolerance for the eigenvector approximation when using sweep cuts to partition the graph.
\Cref{sec:rocbygeneral} applies this theorem to the \roc{} family of graphs.

This section connects the eigenresidual to the error when computing blends of eigenvectors, and quantifies the impact of error on the sweep cut procedure.
 If the eigengap is small enough, then one cannot guarantee that the \rayquot{} is closest to the Fielder value,
 thus one cannot guarantee that the computed eigenvector is close to the desired eigenvector.
 In this small gap setting, a small \eres{} indicates that the computed vector is close to the desired invariant
 subspace.
 \Cref{thm:blendspaceconductance} shows that vectors with small eigenresidual preserve low conductance sweep cuts for general
 graphs. 
 \Cref{thm:blendspaceconductance} illustrates how the residual tolerance depends on both the blend gap $\blendgap$ and the sensitivity $g$ of the
 eigenvectors. 
%
The following section applies this theory to the \roc{} in order to derive solver tolerances for
 graph partitioning.
\newcommand{\corners}{corners}
\newcommand{\nonlocalized}{non-localized}
\newcommand{\intrange}[2]{#1,\dots,#2}
\newcommand{\cosvec}[2]{\bc^{#1,#2}}
\newcommand{\sinvec}[2]{\bs^{#1,#2}}
\newcommand{\pervec}[2]{\bp^{#1,#2}}
\newcommand{\lambdanoise}[1]{-(b-1)^{-1}}
\newcommand{\rocnumglobalnegevals}{q-1}
\newcommand{\LSEV}{LSEV}
\newcommand{\LSEVs}{LSEVs}
\newcommand{\rocdiagblock}{J_b}
\newcommand{\rocdiagblockdef}{\bfo_b \bfo_b^t - I_b}
\newcommand{\rocoffdiagblock}{\be_1 \be_1^t}
\newcommand{\bchoosetwo}{{b\choose 2}}
\renewcommand{\fiedvec}{\bv_2}
\renewcommand{\fiedval}{\lambda_2}
\newcommand{\blendspacemat}{\cX_{signal}}
\newcommand{\xnaught}{\bx^{(0)}}
\newcommand{\clique}[1]{{\cal K}_{#1}}
\newcommand{\sigv}[2]{\bv_{#1}^{\paren{{#2}}}}
\newcommand{\siglambda}[2]{\bv_{#1}^{\paren{{#2}}}}
\newcommand{\sigxie}[2]{\bv_{#1}^{\paren{{#2}}}}
\newcommand{\sigbeta}[2]{\bv_{#1}^{\paren{{#2}}}}
\newcommand{\sigalpha}[2]{\bv_{#1}^{\paren{{#2}}}}
\newcommand{\roceigenvectorsplotroc}{{\cal R}_{16,10}}
\newcommand{\mistakefree}{mistake-free}
\newcommand{\Mistakefree}{Mistake-free}
\newcommand{\tradeoff}{trade-off}
\newcommand{\blendwidth}{blendwidth}
\newcommand{\bQ}{\mathbf{Q}}

\section{The Ring of Cliques}\label{sec:ROCstart}

To demonstrate the theory developed in \cref{sec:blends} and to make our arguments as clear as possible,
we employ a previously studied model problem, the \roc{}~\cite{Fortunato2007resolutionlimit}.  
Theorems~\ref{thm:Xnoise}~and~\ref{thm:Epairs} derive explicit
formulas for all eigenvalues and eigenvectors. These formulas determine the relevant residual tolerance.
Moreover, complete spectral knowledge gives a strong understanding the convergence properties of simple
iterative methods.   

 The \roc~has several attractive properties for analysis of spectral partitioning.
 The Community structure is as extreme as possible for a connected graph, so the solution is well-defined.
 Also, we can apply theorems about block circulant matrices~\cite{tee2005eigenvectors} to produce closed form solutions to the eigenvector problem.
 This graph serves as a canonical example of when solving an eigenproblem accurately is unnecessarily expensive to achieve data analysis success. This example shows that it is possible for the
 combinatorial structure of the data to be revealed faster than the algebraic structure of the associated matrices.
 The graph is simple to partition accurately as there are many cuts relatively close to the minimum.
 Any robust partitioning algorithm will correctly recover the cliques in this graph.
 However, a Fiedler eigenvector is difficult to calculate with guarantees of
  point-wise accuracy when using non-preconditioned iterative methods.
 An algorithm that computes a highly accurate eigenpair will be inefficient on large problem instances.
 \Cref{sec:rocbygeneral,sec:roc-perturbation} apply the tools from \Cref{sec:blends} in order to derive a residual tolerance sufficient for solving the \roc{}.
 \Cref{sec:powermethod} bounds the number of power method iterations necessary to recover the \roc{}, and
 \Cref{sec:roc_experiment} validates and illustrates these observations with an experiment.

 
\subsection{Definition}\label{sec:ROCdefinition}
A $q$-ring of $b$-cliques, $\Roc$, is parameterized by a block size $b$ and a number of blocks $q$.
Each block represents a clique of size $b$ and all possible internal connections exist within each individual set of $b$ vertices.
For each block, there is a single vertex called the {\it{corner}}\/ connected to the corners of the adjacent cliques.
These $q$ corners form a ring.
Each block also has  $\paren{b-1}$ {\it internal}\/ vertices that have no edges leaving the block.
The adjacency matrix associated with $\Roc$ is a sum of tensor products of simple matrices (identity, cycle, and rank-one matrices).
We have
$$
	A = I_q \otimes (\rocdiagblockdef) + C_q \otimes (\rocoffdiagblock),  
$$
where the $I_k$ are identity matrices of dimension $k$, $\bfo_b$ is a constant vector with dimension $b$, $\be_1 \in
\mathbb{R}^b$ is the cardinal vector with a one in its first entry and zero elsewhere, and $C_q$ is the adjacency matrix
of a cycle graph on $q$ vertices.
The matrix $A$ and other matrices associated with this graph are {\em block-circulant}, which implies the eigenvectors are the Kronecker product of a periodic vector and the eigenvectors of a small eigenproblem defined by structure in the blocks.
\Cref{fig:rocvizgraph} shows the structure of the graph, and \Cref{fig:rocvizmat} shows the block structure of the
adjacency matrix.
\begin{figure}[htb]
	\centering
    \begin{minipage}[b]{0.45\textwidth}
        \includegraphics[width=\textwidth]{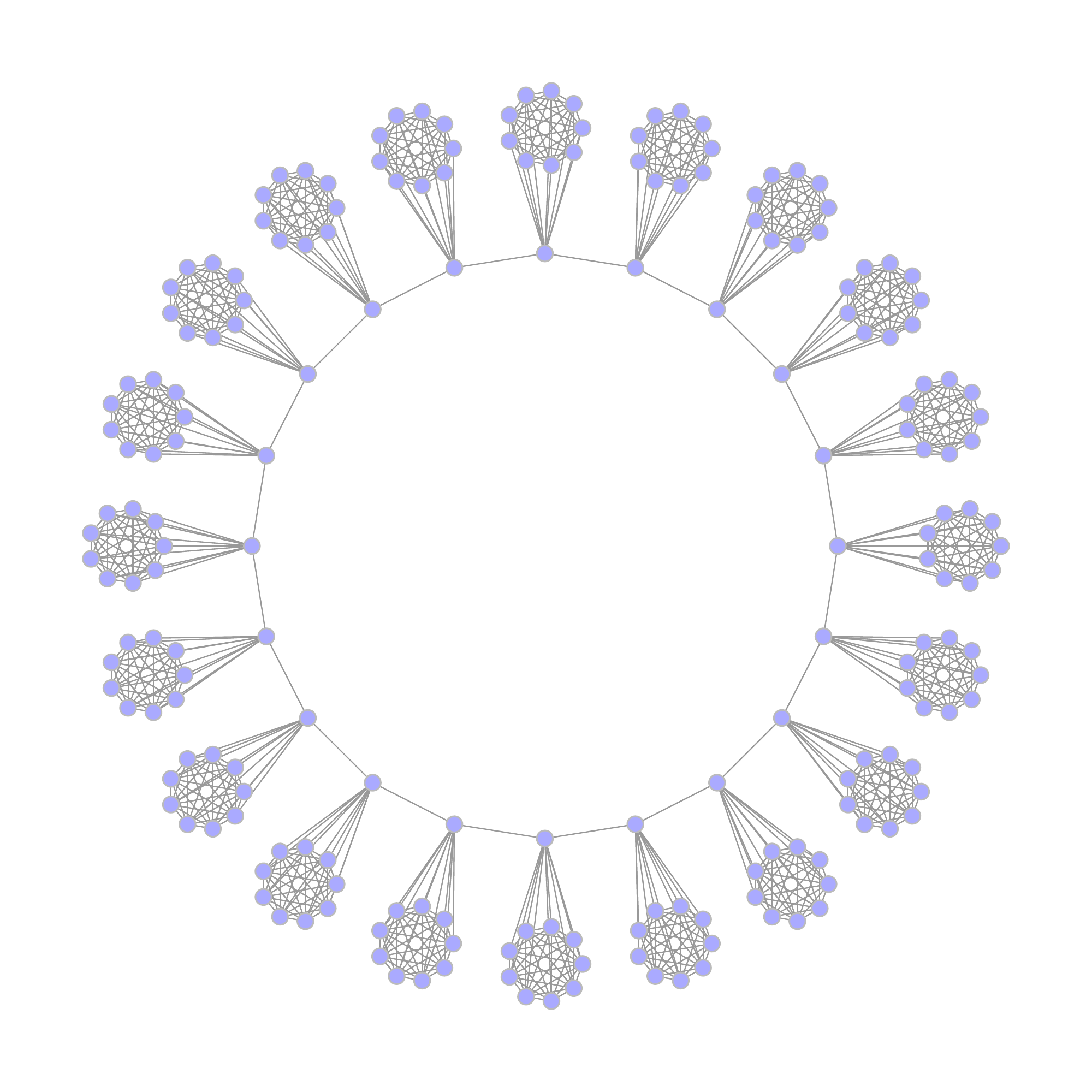}
	    \caption{A drawing of $\Roc$ laid out to show strucure.}
        \label{fig:rocvizgraph}
    \end{minipage}\quad
    \begin{minipage}[b]{0.45\linewidth}
    \setlength{\arraycolsep}{3pt}
    \[
\begin{bmatrix}
\rocdiagblock & \rocoffdiagblock & 0 & \cdots & & \rocoffdiagblock \\
\rocoffdiagblock & \rocdiagblock & \rocoffdiagblock & 0 & & \\
0 & \rocoffdiagblock & \rocdiagblock & \rocoffdiagblock & 0 & \\
\vdots && \ddots & \ddots & \ddots \\
&& 0 & \rocoffdiagblock & \rocdiagblock & \rocoffdiagblock \\
\rocoffdiagblock &&& 0 & \rocoffdiagblock & \rocdiagblock \\
 
\end{bmatrix}
    \] \\
$$
\mbox{where} \quad \rocdiagblock = \rocdiagblockdef .
$$    
    \vspace{0.5cm}
    \caption{The adjacency matrix of $\Roc{}$ has block circulant structure.}
    \label{fig:rocvizmat}
    \end{minipage}
\end{figure}

Any partition that breaks a clique cuts at least $b-2$ edges while any partition that does not break any cliques cuts at most $q$ edges.
The best partition is break the ring into two contiguous halves by cutting exactly two edges.
There are $q/2$ partitions that achieve this minimal cut for even $q$.
We will consider any of these equivalent.
Any partition that breaks fewer than $b-2$ edges will be regarded as a good, but not optimal cut.
The fact that many partitions are close to optimal and then the vast majority of partitions are very far from optimal is a feature of this model problem.

\subsection{Eigenpairs of ROC normalized Adjacency Matrix}\label{sec:ROCepairs}
\begin{figure}[h]
	\label{fig:rocspect}
	\centering
	\includegraphics[width=1.0\textwidth]{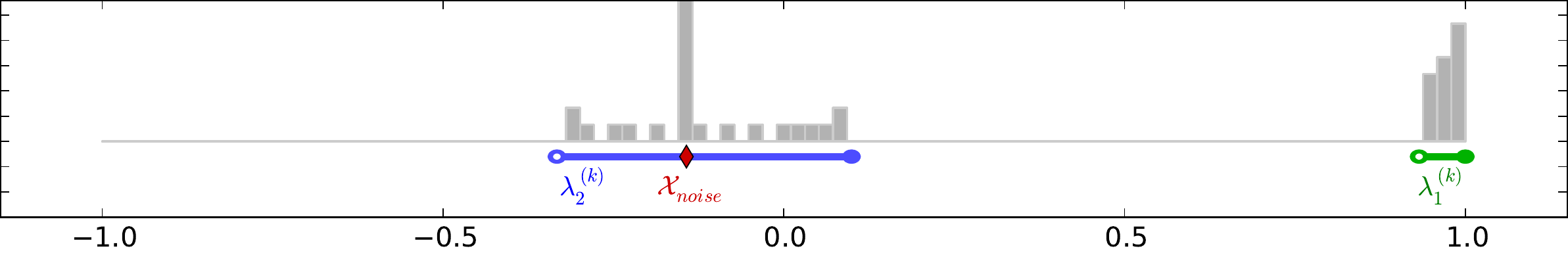}
	\caption{Distribution of eigenvalues of $\Ah$ for ${\cal R}_{b=8, q=32}$.
		The gray bars represent a histogram of the eigenvalues, including multiplcities.
		The red diamond represents a large multiplicity of $(n-2q)$ at $-(b-1)^{-1}$ corresponding to $\cX_{noise}$ (see Theorem~\ref{thm:Xnoise}).   
		There is a green interval near 1 containing the portion of the spectrum given by $\lambda^{(k)}_1$ and a blue interval near 0 containing $\lambda^{(k)}_1$ for $k=0, 1, ..., \lfloor q/2\rfloor$ (see Theorem~\ref{thm:Epairs}).
		The open left endpoint of each interval signifies that the eigenvalue corresponding to $\lambda^{(k)}_{\lceil q/2\rceil}$ is not present when $q$ is odd.}
\end{figure}

Due to the block-circulant structure of the \roc{} we are able to compute the eigenvalues and  eigenvectors in a closed form.
Let $\cosvec{k}{q},\sinvec{k}{q}$ be the periodic vectors as defined in \cref{eqn:periodicdefs}.
\begin{equation}\label{eqn:periodicdefs}
 \cosvec{k}{q}_j = \cos\paren*{\frac{2\pi k}{q}j}\quad
 \sinvec{k}{q}_j = \sin\paren*{\frac{2\pi k}{q}j}\quad \text{for}\ j=\intrange{1}{q} 
\end{equation}
These vectors are periodic functions with $k$ cycles through the interval $\clintervalcl{-1}{1}$ sampled at the $q$th roots of unity.
We employ results from \cite{Henson:2012:LSE} and \cite{tee2005eigenvectors} to derive the full spectrum of $\hat{A}$ and a full eigenbasis.
In summary, there is an eigenvalue $\lambdanoise{b}$ with a large multiplicity, $n - 2q = (b-2)q$.
Furthermore, the eigenspace associated with $\lambdanoise{b}$ can be represented in a basis that contains variation
internal to each clique, that is with eigenvectors of the form $\bh \otimes \be_i$ where $\be_i$ is the $i$th standard
basis vector for each $i\in \set{1\dots q}$.
For this reason, we call $\lambda_{noise} = \lambdanoise{b}$ a noise eigenvalue.
The positive eigenvalues are called signal eigenvalues.
The signal eigenvectors have the form $\pervec{k}{q} \otimes (\xi \be_1 + \bg_1)$, where $\pervec{k}{q}$ is either $\sinvec{k}{q}$ or $\cosvec{k}{q}$, $\be_1$ is one in its first entry, $\bg_1$ is zero in its first entry and one elsewhere, and $\xi$ is a  scalar. 
All of the internal members of the cliques take the same value in any eigenvector associated with $\lambda_k(\hat{A}) \neq (b-1)^{-1}$. 
The slowly varying eigenvectors (associated with $\lambda_k(\hat{A}) \approx 1$) give nearly optimal
partitions of the graph.
Linear combinations of these slowly varying signal eigenvectors also give low conductance partitions.
There are $\rocnumglobalnegevals$ \nonlocalized{} eigenvectors with small positive or negative eigenvalues.
These eigenvectors have the internal clique members and their corner with different sign which causes them to
misclassify some of the corners.
The distribution of the eigenvalues of $\Roc$ is illustrated in \Cref{fig:roceigenvectorsplot}.
The rest of \cref{sec:ROCepairs} contains formulas for the eigenpairs and the details of their derivations.
\begin{thm}{\bf ($\Roc$ Noise Eigenpairs)}
\label{thm:Xnoise}
There is an eigenspace ${\cal X}_{noise}$ of multiplicity $(n - 2q)$ associated with eigenvalue
\[
\lambda_{noise} = \frac{-1}{b-1}.
\]
Any vector that is zero-valued on all corner vertices and sums to zero on each individual set of internal vertices is in ${\cal X}_{noise}$.
\end{thm}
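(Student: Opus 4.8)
The plan is to verify the eigenvalue equation $\Ah\bx = \lambda_{noise}\bx$ directly, exploiting the tensor-product structure $A = I_q\otimes(\rocdiagblockdef) + C_q\otimes\rocoffdiagblock$, and then to count dimensions. First I would index each vertex by a pair $(c,i)$ with $c\in\{1,\dots,q\}$ the clique and $i\in\{1,\dots,b\}$ the position inside it, taking $i=1$ to be the corner. The structural facts I would record are that every internal vertex ($i\geq 2$) has degree $b-1$, every corner has degree $b+1$ (its $b-1$ clique neighbors plus its two ring neighbors from $C_q$), and that a candidate vector $\bx$ is supported entirely on internal vertices, where the degree is the constant $b-1$. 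This uniformity of degree on the support of $\bx$ is what makes the symmetric normalization collapse cleanly, so that $\Ah$ acts there essentially like $\frac{1}{b-1}$ times the clique adjacency.

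Next I would check the two row types of $\Ah$ separately. For a corner row $(c,1)$, the ring term $C_q\otimes\rocoffdiagblock$ only couples corners, all of which carry value $0$, so the only contributions come from the internal vertices of the same clique; the resulting entry of $\Ah\bx$ is proportional to $\sum_{i\geq 2} x_{(c,i)}$, which vanishes by the zero-sum hypothesis, matching $\lambda_{noise}\cdot 0$. For an internal row $(c,i)$ with $i\geq 2$, the only neighbors carrying nonzero value are the other internal vertices of the same clique (the corner contributes $0$, and internal vertices have no external edges), and dividing by $\sqrt{(b-1)(b-1)}$ gives $\frac{1}{b-1}\sum_{j\geq 2,\,j\neq i} x_{(c,j)} = \frac{1}{b-1}\bigl(0 - x_{(c,i)}\bigr) = -\frac{1}{b-1}x_{(c,i)}$, again using the zero-sum condition. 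Both cases reproduce the eigenvalue $\lambda_{noise} = -1/(b-1)$, so every such $\bx$ is an eigenvector.

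Finally I would count dimensions and pin down the multiplicity. Fixing the corners to zero and imposing the single sum-to-zero constraint on the $b-1$ internal coordinates of each of the $q$ cliques leaves $q(b-2)=n-2q$ free dimensions, so the exhibited family is a subspace of $\lambda_{noise}$-eigenvectors of dimension $n-2q$, giving $\dim\cX_{noise}\geq n-2q$. The main obstacle is the reverse inequality. Since $\Ah$ is symmetric, the orthogonal complement of this subspace is also $\Ah$-invariant; it is the $2q$-dimensional space spanned by the $q$ corner indicators $\be_{(c,1)}$ and the $q$ vectors that are one on the internal vertices of a single block and zero elsewhere, i.e.\ vectors constant across the internal vertices of each block. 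Establishing equality reduces to showing that $\Ah$ restricted to this $2q$-dimensional complement has no eigenvalue equal to $-1/(b-1)$, which is exactly what the explicit signal eigenpairs computed in the following subsection provide; combined with the lower bound this fixes the multiplicity at $n-2q$. The per-row verification itself is routine once the block structure is unwound, so the genuine content lies in recognizing that the normalization is trivial on the internal support and in the dimension bookkeeping for the complement.
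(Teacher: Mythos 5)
Your proposal is correct and follows essentially the same route as the paper: both arguments exploit the fact that the symmetric normalization is trivial on the internal vertices (all of degree $b-1$), so that $\Ah$ acts on the zero-sum, corner-free vectors as $\frac{1}{b-1}(\bfo_b\bfo_b^t - I)$ restricted to each clique, and both count $q(b-2)=n-2q$ dimensions; the paper simply verifies this on the spanning set $\be_i-\be_j$ rather than row by row on a general vector. Your explicit remark that the \emph{exact} multiplicity requires checking that the $2q$-dimensional block-constant complement carries no $-1/(b-1)$ eigenvalue is a point the paper leaves implicit and defers to the signal-eigenpair computation, so it is a welcome clarification rather than a divergence.
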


\begin{proof}
    This is a specific case of locally supported eigenvectors~\cite{Henson:2012:LSE} (\LSEVs{}) brought on by a high level of local symmetry in $\Roc$. 
    For each clique $\clique{}$, let $\bx = \be_i - \be_j$ for vertices $i$ and $j$ that are internal vertices of $\clique{}$.
    Both $\bx$ and $\Ah \bx$ are zero valued outside of $\clique{}$.
    Internally, due to $D^{-1/2} (\be_i - \be_j) = (b-1)^{-1/2}(\be_i - \be_j)$ and the orthogonality $\bfo_b^t (\be_i - \be_j) = 0$, we see
\[
\Ah (\be_i - \be_j) =
\frac{1}{b-1} (\bfo_b \bfo_b^t - I ) (\be_i - \be_j) = \frac{-1}{b-1} (\be_i - \be_j). 
\]
Thus, $\bx$ is an eigenvector of $\Ah$ associated with $-1/(b-1)$.
There are $(b-2)$ such linearly independent eigenvectors for $\clique{}$, and the same is true for all $q$ cliques.
Thus, we have a multiplicity of $q(b-2) = n - 2q$ for eigenvalue $\lambda_{noise} = (b-1)^{-1}$.

\end{proof}

These vectors are in the interior of the spectrum and thus are very well attenuated by the power-method\footnote{In a single iteration the shifted power method,  $\bx_{k+1} = (\hat{A} + (b-1)^{-1} I) \bx_{k}$,  perfectly eliminates all of the energy in the $(n-2q)$-dimensional eigenspace associated with $\lambda=-(b-1)^{-1}$. If the graph is perturbed with the addition and removal of a few edges, the eigenvectors become slightly less localized and the associated eigenvalues spread out to a short range of values and are not perfectly eliminated in a single iteration.   However, the power method or a Krylov method will rapidly attenuate the energy in the associated eigenspaces.}.
The remaining eigenvectors must be constant on the internal nodes of the blocks because of orthogonality to the \LSEVs{}
which are spanned by $\be_i-\be_j$.
In any vector $\bv$ the projection of $\bv$ onto the global eigenvectors defines a mean value for the elements of the
blocks. Since all of the eigenvectors of interest are orthogonal to the constant vector, their entries must sum to zero.
So the \LSEVs{} cannot change the mean of a block. The remaining eigenvectors are given in \cref{thm:Epairs}.




\begin{thm}{\bf (ROC Signal Eigenpairs)} For $k=0,..., \lceil \frac{q}{2} \rceil - 1$, define
\label{thm:Epairs}
\begin{eqnarray*}
\alpha_k & = & 2 \cos \left( \frac{2 \pi k}{q}\right) \\
\beta_k & = & \frac{1}{2} \left( \alpha_k \sqrt{\frac{b-1}{b+1}} - \sqrt{b^2-1} + \sqrt{\frac{b+1}{b-1}} \right) \\
\xi_1^{(k)} & = & \beta_k + \sqrt{\beta_k^2+(b-1)} \\
\xi_2^{(k)} & = & \beta_k - \sqrt{\beta_k^2+(b-1)} \\
\lambda_1^{(k)} & = & \frac{\xi^{(k)}_1}{\sqrt{b^2-1}} + 1 - \frac{1}{b-1} \\
\lambda_2^{(k)} & = & \frac{\xi^{(k)}_2}{\sqrt{b^2-1}} + 1 - \frac{1}{b-1} \\
\end{eqnarray*}
\begin{itemize}

Let $\bfo_b$ and $\bfo_q$ be the vectors of all ones in $\mathbb{R}^b$ and $\mathbb{R}^q$, respectively.  Also let $\be_1 \in \mathbb{R}^b$ have a one in its first entry, zero elsewhere and $\bg_1 = \bfo_b - \be_1$.   We have the following eigenpairs.

\item[(i)] For $k=0$, we have 2 eigenvalues of $\hat{A}$, $\lambda_1^{(0)}$ and $\lambda_2^{(0)}$, each with multiplicity 1.    The associated (unnormalized) eigenvectors are 
\[
\bv_1^{(0)} = \sqrt{b+1} (\bfo_q  \otimes \be_1) + \sqrt{b-1} (\bfo_q  \otimes \bg_1) 
\]
and
\[
\bv_2^{(0)} = (b-1)^{3/2} (\bfo_q  \otimes \be_1) - \sqrt{b+1} (\bfo_q  \otimes \bg_1) 
\]
respectively. 

\item[(ii)] For each $k=1, ..., \lceil \frac{q}{2} \rceil - 1$, we have 2 eigenvalues of $\hat{A}$, $\lambda_1^{(k)}$ and $\lambda_2^{(k)}$, each with multiplicity 2.
Two independent (unnormalized) eigenvectors associated with $\lambda_1^{(k)}$ are
\[
\bv_{1,1}^{(k)} = \cosvec{k}{q} \otimes \left(\xi_1^{(k)}\be_1 + \bg_1 \right)\label{eqn:kroneker-signalvectors}
\quad \mbox{and} \quad
\bv_{1,2}^{(k)} = \sinvec{k}{q} \otimes \left(\xi_1^{(k)}\be_1 + \bg_1 \right). 
\]
Two independent (unnormalized) eigenvectors associated with $\lambda_2^{(k)}$ are
\[
\bv_{2,1}^{(k)} = \cosvec{k}{q} \otimes \left(\xi_2^{(k)}\be_1 + \bg_1 \right)
\quad \mbox{and} \quad
\bv_{2,2}^{(k)} = \sinvec{k}{q} \otimes \left(\xi_2^{(k)}\be_1 + \bg_1 \right).
\]
\item[(iii)] If $q$ is even, then for $k=\frac{q}{2}$, we have 2 eigenvalues of $\hat{A}$, $\lambda^{(q/2)}_1$ and $\lambda^{(q/2)}_2$, each with multiplicity 1.   The associated (unnormalized) eigenvectors are 
\[
\bv_1^{(q/2)} = \cosvec{q/2}{q} \otimes \left(\xi_1^{(q/2)}\be_1 + \bg_1 \right)
\]
and
\[
\bv_2^{(q/2)} = \sinvec{q/2}{q} \otimes \left(\xi_2^{(q/2)}\be_1 + \bg_1 \right)
\]
respectively.

\end{itemize}

Note if values of $\lambda_p^{(k)}$ and $\lambda_q^{(l)}$ coincide for $(p,k)\neq(q,l)$ the eigenvalue multiplicities add up.

\end{thm}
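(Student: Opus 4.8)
The plan is to exploit the block-circulant structure of $\Ah$ to split the full $n\times n$ eigenproblem into $q$ independent $b\times b$ problems, and then to use the internal-vertex symmetry of each clique to collapse each of those to a $2\times 2$ problem whose roots are exactly the claimed $\xi^{(k)}_j$ and $\lambda^{(k)}_j$.

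First I would write $\Ah = \Dneghalf A \Dneghalf$ with $D = I_q \otimes D_b$, where $D_b = \operatorname{diag}(b+1, b-1, \dots, b-1)$ records that each corner has degree $b+1$ and each internal vertex has degree $b-1$. Distributing the normalization across the Kronecker factors gives $\Ah = I_q \otimes \hat{J} + C_q \otimes \hat{E}$, with $\hat{J} = D_b^{-1/2}(\rocdiagblockdef)D_b^{-1/2}$ and $\hat{E} = (b+1)^{-1}\be_1\be_1^t$ (both corners involved in an inter-clique edge have degree $b+1$). Since the periodic vectors $\cosvec{k}{q}, \sinvec{k}{q}$ are eigenvectors of the cycle matrix $C_q$ with eigenvalue $\alpha_k = 2\cos(2\pi k/q)$ \cite{tee2005eigenvectors}, any Kronecker vector $\pervec{k}{q}\otimes\bw$ obeys $\Ah(\pervec{k}{q}\otimes\bw) = \pervec{k}{q}\otimes(\hat{J} + \alpha_k\hat{E})\bw$. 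This reduces the task to finding, for each $k$, the eigenpairs of the $b\times b$ matrix $\hat{J} + \alpha_k\hat{E}$.

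Next I would diagonalize $\hat{J} + \alpha_k\hat{E}$ using the symmetry that permutes the $b-1$ internal vertices. This matrix commutes with every such permutation, so its eigenvectors split into the $(b-2)$-dimensional space of internally-antisymmetric vectors and the $2$-dimensional invariant subspace $\spanset{\be_1, \bg_1}$. The antisymmetric part recovers $\lambda_{noise}$ already handled in \Cref{thm:Xnoise}; the signal eigenvectors are exactly those constant on the internal vertices, i.e.\ of the form $\xi\be_1 + \bg_1$. Substituting this ansatz and reading off the two distinct entry-values produces a pair of scalar equations: the internal-vertex equation yields $\lambda = \xi/\sqrt{b^2-1} + 1 - (b-1)^{-1}$, which is precisely the stated relation between $\lambda^{(k)}_j$ and $\xi^{(k)}_j$, while the corner-vertex equation, after eliminating $\lambda$, becomes the quadratic $\xi^2 - 2\beta_k\xi - (b-1) = 0$ with $\beta_k$ exactly as defined. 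The quadratic formula then delivers $\xi^{(k)}_{1,2} = \beta_k \pm \sqrt{\beta_k^2 + (b-1)}$, and back-substitution gives $\lambda^{(k)}_{1,2}$.

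Finally I would account for multiplicities by tracking the dimension of the cycle eigenspace for each $\alpha_k$: $k=0$ and, for even $q$, $k=q/2$ each contribute a single periodic vector, whereas $1\le k\le \lceil q/2\rceil - 1$ contributes the two independent vectors $\cosvec{k}{q}$ and $\sinvec{k}{q}$, which yields the multiplicity-$1$ cases (i), (iii) and the multiplicity-$2$ case (ii). The main obstacle is purely algebraic: verifying that the corner equation collapses to $\xi^2 - 2\beta_k\xi - (b-1) = 0$ requires carefully simplifying the nested radicals (for instance $\sqrt{b^2-1}\,\sqrt{(b-1)/(b+1)} = b-1$ and $\sqrt{b^2-1}/(b-1) = \sqrt{(b+1)/(b-1)}$) so that the messy coefficient of $\xi$ matches $-2\beta_k$. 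Once the degree-normalization bookkeeping across the Kronecker factors is done correctly this step is routine, but it is where sign and radical errors are most likely to creep in.
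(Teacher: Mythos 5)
Your proposal is correct and follows essentially the same route as the paper: the same Kronecker decomposition $\Ah = I_q \otimes B_1 + C_q \otimes B_2$, reduction to the $b\times b$ blocks $H_k = B_1 + \alpha_k B_2$ via the cycle eigenvectors, the ansatz $\xi\be_1 + \bg_1$ on the two-dimensional complement of the noise space, and the same quadratic $\xi^2 - 2\beta_k\xi - (b-1) = 0$. The only cosmetic difference is that you justify the invariance of $\spanset{\be_1,\bg_1}$ by permutation symmetry of the internal vertices, while the paper argues by orthogonality to the locally supported eigenvectors; both are valid.
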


\begin{proof} 
    Let $D_b = \mbox{diag}((b+1)\bfo - 2 \bg_1)$.   Matrix $\hat{A}$ can be organized into blocks that all
    co-commute with each other implying the eigenvectors are tensor products involving the eigenvectors of the blocks.
    We  decompose $\hat{A} = (I_q \otimes B_1) + (C_q \otimes B_2)$, where $I_q$ is the identity in $\mathbb{R}^{q}$,
    $C_q$ is the adjacency matrix of a $q$- cycle, $(C_q)_{ij} = 1 \Longleftrightarrow |i-j|=1$ mod $q$, $B_1 =
    D_b^{-1/2}(\bfo_b \bfo_b^t-I)D_b^{-1/2}$, and $B_2 = \frac{1}{b+1}\be_1 \be_1^t$.  Because any eigenvector $\by$ of
    $C_q$ is also an eigenvector of $I_q$, eigenvectors of $\hat{A}$ have the form $\by \otimes \bz$.      Vectors $\bz$
    are derived by plugging various eigenvectors of $C_q$ into $\by$ and solving for a set of constraints that $\bz$
    must satisfy for $(\by \otimes \bz)$ to be an eigenvector associated with $\hat{A}$.

We describe the eigendecomposition of $C_q$.   For $k=0, \dots, \lceil \frac{q}{2}\rceil-1$, $\alpha_k = 2 \cos (2 \pi / k)$ is an eigenvalue of $C_q$.    For $k=0$, $\alpha_0$ is simple, and $\bfo_b$ is the associated eigenvector.   For $k=1, ..., \lceil \frac{q}{2}\rceil$-1, $\alpha_k$ has multiplicity 2 and the associated $2$-dimensional eigenspace is span($\{\cosvec{k}{q},\, \sinvec{k}{q}$\}), as defined in (\ref{eqn:periodicdefs}).
If $q$ is even, then $\alpha_{q/2}$ is a simple eigenvalue as well and the associated eigenvector is $\cosvec{q/2}{q}$.
Letting $\by$ be an eigenvector associated with $\alpha_k$ and using the properties of Kronecker products, we see
\beqas
\hat{A} (\by \otimes \bz) & = & 
\left[ (I_q \otimes B_1) + (C_q \otimes B_2) \right](\by \otimes \bz)  
= \left[ (I_q \by \otimes B_1 \bz) + (C_q \by \otimes B_2 \bz) \right] \\
& =& \left[ (\by \otimes B_1 \bz) + ( \alpha_k \by \otimes B_2 \bz) \right] =
 \left[ (\by \otimes B_1 \bz) + ( \by \otimes \alpha_k B_2 \bz) \right]\\
& =& \by \otimes \left(B_1 \bz +\alpha_k B_2 \bz \right)  =
\by \otimes  \left[ (B_1 +\alpha_k B_2 ) \bz\right] \\
\eeqas
Here we see that if $\bz$ is an eigenvector of $H_k := B_1 + \alpha_k B_2$, then $(\by \otimes \bz)$ is an eigenvector associated with $\hat{A}$.   Observe that $H_k = D_b^{-1/2} (\bfo_b \bfo_b^t +\alpha_k \be_1 \be_1^t - I) D_b^{-1/2} $ is a scaling of a rank-2 shift from the identity matrix, where we would expect 3 eigenvalues: 2 simple and one of multiplicity $(b-2)$.

We can easily verify that there is a $(b-2)$-dimensional eigenspace of $H_k$ associated with $-1/(b-1)$.    The tensor products of these vectors are an alternative basis associated with the locally supported eigenvectors from Theorem~\ref{thm:Xnoise}.   The associated eigenspace of $H_k$ is orthogonal to span($\{\be_1, \bg_1\}$).   Due to eigenvector orthogonality, the last two eigenvectors must be in the range of span($\{\be_1, \bg_1\}$).  Note that $D_b^p \be_1=  (b+1)^p \be_1$ and $D_b^p \bg_1=  (b-1)^p \bg_1$.   We use this to solve for these eigenvectors and their associated eigenvalues in terms of $\alpha_k$ and $b$,
\beqas
H_k ( \xi \be_1 + \bg_1) & = & \lambda ( \xi \be_1 + \bg_1) \\
(\bfo_b \bfo_b^t + \alpha_k \be_1 \be_1^t - I_b )  D_b^{-1/2}( \xi \be_1 +  \bg_1) & = & \lambda  D_b^{1/2}  (\xi \be_1 + \bg_1)
\eeqas
The right-hand side expands to
$$
\left( \lambda \xi \sqrt{b+1} \right) \be_1 + \left( \lambda \sqrt{b-1} \right) \bg_1,
$$
The left-hand side expands and simplifies to
\beqas
 \left( \frac{\xi}{\sqrt{b+1}} + \sqrt{b-1} \right) (\be_1 + \bg_1) +  \left(  \frac{\xi \alpha_k }{\sqrt{b+1}} - \frac{\xi}{\sqrt{b+1}}  \right) \be_1+  \left( \frac{-1}{\sqrt{b-1}}  \right) \bg_1  & = & \\
\left(  \frac{\xi \alpha_k}{\sqrt{b+1}} + \sqrt{b-1}  \right) \be_1+  \left(  \frac{\xi }{\sqrt{b+1}} + \sqrt{b-1} - \frac{1}{\sqrt{b-1}}   \right) \bg_1  & = & \\
\eeqas

The coefficients of $\be_1$ and $\bg_1$ must be equal, individually, because they are not linearly dependent.   Equating the left-hand and right-hand sides and simplifying gives two nonlinear equations in $\xi$ and $\lambda$,
\begin{eqnarray}
 \frac{\xi \alpha_k}{b+1} + \sqrt{\frac{b-1}{b+1}} & = & \xi \lambda \label{eqn:nleqs1}\\
  \frac{\xi }{\sqrt{b^2-1}} + 1 - \frac{1}{b-1}  & = & \lambda. \label{eqn:nleqs2}
\end{eqnarray}
Multiplying the second equation by $\xi$, setting the left-hand sides of both equations equal to eliminate $\lambda$, then simplifying, yields the following quadratic equation in $\xi$,
$$
\xi^2 - \left( \alpha_k \sqrt{\frac{b-1}{b+1}} - \sqrt{b^2 - 1} + \sqrt{\frac{b+1}{b-1}}\right)\xi - \left( b-1\right) = 0,
$$
which is easily solved.   Define
$$
\beta_k = \frac{1}{2} \left(\alpha_k \sqrt{\frac{b-1}{b+1}} - \sqrt{b^2 - 1} + \sqrt{\frac{b+1}{b-1}} \right)\qq{and}
\gamma_k = b-1.
$$
Given $\xi$, $\lambda$ is determined by the second equation in (\ref{eqn:nleqs2}).   The solution set to nonlinear equations~(\ref{eqn:nleqs1})-(\ref{eqn:nleqs2}) is then
\beqas
\xi^{(k)}_1 = \beta_k + \sqrt{\beta_k^2 + \gamma_k},
&&
\lambda^{(k)}_1 = \frac{\xi^{(k)}_1 }{\sqrt{b^2-1}} + 1 - \frac{1}{b-1} , \qq{and}\\
\xi^{(k)}_2 = \beta_k - \sqrt{\beta_k^2 + \gamma_k}, 
&& 
\lambda^{(k)}_2 = \frac{\xi^{(k)}_2 }{\sqrt{b^2-1}} + 1 - \frac{1}{b-1}.
\eeqas

Thus we have local eigenpairs of $H_k$, $((\xi^{(k)}_1 \be_1 + \bg_1), \lambda_1^{(k)})$ and $( (\xi^{(k)}_2 \be_1 + \bg_1), \lambda_2^{(k)})$.   
The local eigenpairs  $((\xi^{(k)}_1 \be_1 + \bg_1), \lambda_1^{(k)})$ 
yield global eigenpairs of $\hat{A}$ of the form 
$( (\bc_k \otimes (\xi^{(k)}_1 \be_1 + \bg_1) ), \, \lambda_1^{(k)})$
and $( (\bs_k \otimes (\xi^{(k)}_1 \be_1 + \bg_1) ), \, \lambda_1^{(k)})$.
Similarly, $((\xi^{(k)}_1 \be_1 + \bg_1), \, \lambda_2^{(k)})$ yield global eigenvectors of $\Ah$ associated with
 $\lambda_2^{(k)}$.
This accounts for the last $2q$ eigenpairs of $\hat{A}$.
\end{proof}

In order to illustrate these formulas, \Cref{fig:roceigenvectorsplot} shows
the computed eigenvectors for the graph $\roceigenvectorsplotroc$ along with the eigenvalues.
Eigenvectors associated with eigenvalues close to $1$ have low conductance sweep cuts.
\begin{figure}[h]
    \includegraphics[width=1.0\textwidth]{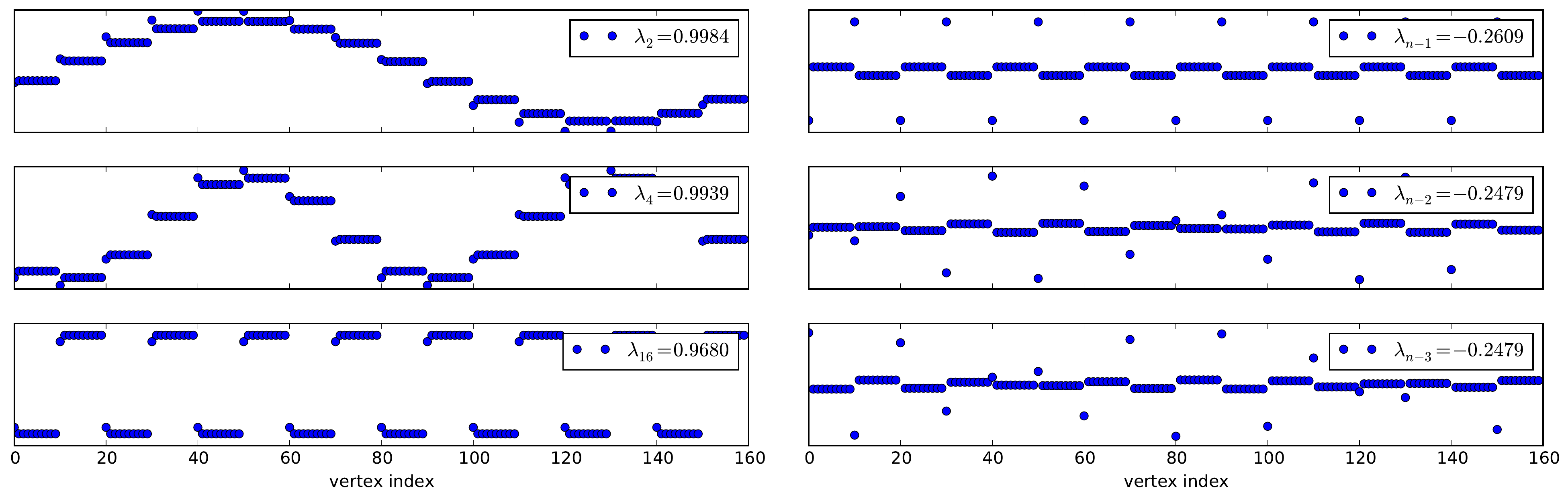}
    \caption{The eigenvectors of $\Ah$ are shown for $\roceigenvectorsplotroc$. The eigenvectors with eigenvalues close to $1$
    (left) indicate the block structure with differing frequencies. The eigenvectors close to $-1$ (right) assign
    opposite signs to the internal vertices and corner vertices of each block.}
    \label{fig:roceigenvectorsplot}
\end{figure}

\begin{coro}
\label{cor:evals}
The asymptotic expansions of the eigenvalues are as follows.   
\begin{itemize}
\item[(i)] For the signal eigenpairs, we see
$$
\lambda_1^{(k)} = 1- \frac{4-\alpha_k}{2 (b^2 - 1)} + \frac{1}{4(b-1)^2} + \frac{\alpha_k^2}{4(b+1)^2} + \cO(b^{-3})
$$
\item[(ii)] For the non-signal (and non-noise) eigenpairs, we see
$$
\lambda_2^{(k)} = \frac{\alpha_k-1}{b+1} - \frac{\alpha_k^2}{4(b+1)^2} - \frac{\alpha_k}{2 (b^2 - 1)}  -  \frac{1}{4 (b - 1)^2} + \cO(b^{-3}) 
$$
\item[(iii)] In particular, if $q$ is even we have
\begin{eqnarray*}
\lambda_2 = \lambda_1^{(1)} & \approx & 1 - \left[2 - \cos\left(\frac{2 \pi}{q}\right)\right] \frac{1}{b^2-1} +\frac{1}{4(b-1)^2} + \cos^2 \left(\frac{2 \pi}{q}\right) \frac{1}{(b+1)^2} , 
\\
\lambda_q = \lambda^{(q/2)}_{1} & \approx & 1 - \frac{3}{b^2-1} +\frac{1}{4(b-1)^2} + \frac{1}{(b+1)^2}, 
\\
\lambda_{q+1} = \lambda^{(0)}_{2} & \approx & \frac{1}{b+1}  -  \frac{1}{(b+1)^2} - \frac{1}{b^2 -1} - \frac{1}{(b-1)^2},
\\
\lambda_{n} = \lambda^{(q/2)}_{2} & \approx & \frac{-2}{b+1}  - \frac{1}{(b+1)^2}  + \frac{1}{b^2 -1} - \frac{1}{(b-1)^2}. 
\end{eqnarray*}
In other words,  
\begin{eqnarray*}
 1- \frac{C_2}{b^2} < \lambda_2 < 1, &&   1- \frac{C_q}{b^2} < \lambda_q < 1, \\ \\
0  < \lambda_{q+1} < \frac{C_{q+1}}{b}, &&   - \frac{C_n}{b} < \lambda_n < 0, \\
\end{eqnarray*}
where $C_2, C_q, C_{q+1}, $ and $C_n$ are positive quantities of order 1.  
\end{itemize}
\end{coro}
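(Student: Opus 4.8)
The plan is to treat Corollary~\ref{cor:evals} as a collection of Laurent expansions in the single variable $b$ of the closed-form quantities produced by \cref{thm:Epairs}, holding $\alpha_k = 2\cos(2\pi k/q) \in [-2,2]$ as a bounded $\cO(1)$ parameter. Every eigenvalue is an explicit algebraic function of $b$ and $\alpha_k$ through the chain $\alpha_k \mapsto \beta_k \mapsto \xi^{(k)}_i \mapsto \lambda^{(k)}_i$, so the whole proof is a disciplined Taylor expansion; the only real work is retaining enough terms and organizing the cancellations.

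First I would expand the three surds appearing in $\beta_k$. In powers of $1/b$ one has $\sqrt{(b-1)/(b+1)} = 1 - b^{-1} + \tfrac12 b^{-2} + \cO(b^{-3})$, $\sqrt{(b+1)/(b-1)} = 1 + b^{-1} + \tfrac12 b^{-2} + \cO(b^{-3})$, and $\sqrt{b^2-1} = b - \tfrac12 b^{-1} + \cO(b^{-3})$, which combine to $\beta_k = -\tfrac{b}{2} + \tfrac{\alpha_k+1}{2} + \tfrac{3-2\alpha_k}{4} b^{-1} + \cO(b^{-2})$. The key manoeuvre is to avoid expanding $\xi^{(k)}_1 = \beta_k + \sqrt{\beta_k^2 + (b-1)}$ directly, because it is a difference of two large nearly-equal quantities (since $\beta_k \approx -b/2$, we have $\sqrt{\beta_k^2+(b-1)} \approx -\beta_k$), and a naive expansion loses the leading behaviour. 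Instead I use that $\xi_1^{(k)}$ and $\xi_2^{(k)}$ are the two roots of $\xi^2 - 2\beta_k\xi - (b-1) = 0$, so $\xi_1^{(k)}\xi_2^{(k)} = -(b-1)$. I expand the \emph{large} root $\xi_2^{(k)} = \beta_k - \sqrt{\beta_k^2+(b-1)} = -b + \alpha_k + \tfrac{5-4\alpha_k}{2}b^{-1} + \cO(b^{-2})$ cleanly, then recover $\xi_1^{(k)} = -(b-1)/\xi_2^{(k)} = 1 + (\alpha_k-1)b^{-1} + \cO(b^{-2})$.

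Substituting into $\lambda^{(k)}_i = \xi^{(k)}_i/\sqrt{b^2-1} + 1 - (b-1)^{-1}$ and collecting by powers of $1/b$ then produces the expansions (i) and (ii); part (iii) follows by setting $\alpha_1 = 2\cos(2\pi/q)$ and $\alpha_{q/2} = -2$. The displayed inequalities come from the signs of the leading corrections: for $k \ge 1$ we have $\alpha_k < 2$, so the leading $1/b^2$ term of $\lambda^{(k)}_1$ is negative, giving $\lambda < 1$, while $|\alpha_k - 2| \le 4$ bounds the correction below; the $\lambda_2$ bounds follow identically from the leading term $(\alpha_k-1)/b$ together with the $\cO(b^{-2})$ remainder.

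The main obstacle is the bookkeeping of a cancellation. In $\lambda^{(k)}_1$ the $\cO(1/b)$ contribution of $\xi_1^{(k)}/\sqrt{b^2-1}$ exactly cancels the $\cO(1/b)$ term of $-(b-1)^{-1}$, so the first nontrivial correction sits at order $1/b^2$; consequently $\beta_k$, $\xi_1^{(k)}$ and the surds must each be carried to high enough order that no $1/b^2$ contribution is dropped, and it is easy to misplace a term here. A clean internal check anchors the whole computation: at $k=0$ one has $\alpha_0 = 2$ and eigenvector $\bv_1^{(0)} = D^{1/2}\ones$, the Perron vector of $\Ah$ with \emph{exact} eigenvalue $\lambda_1^{(0)} = 1$ (since $\Ah D^{1/2}\ones = D^{-1/2}A\ones = D^{1/2}\ones$). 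Hence the expansion of $\lambda_1^{(k)}$ must collapse identically to $1$ when $\alpha_k = 2$; requiring this forces the constant and $1/b^2$ coefficients and catches any dropped term in the delicate step.
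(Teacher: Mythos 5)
Your proposal is correct, and at the one step where it matters it takes a different and better-conditioned route than the paper. The paper expands $\sqrt{\beta_k^2+(b-1)}$ directly via the first-order rule $\sqrt{a^2+x}=a+\tfrac{1}{2}a^{-1}x+\cO(a^{-3}x^2)$ with $a=\tfrac{1}{2}\sqrt{(b-1)(b+1)}$, then adds $\beta_k$ and relies on the two $\cO(b)$ terms cancelling. You instead expand the large root $\xi_2^{(k)}$ (a sum, not a difference, of large quantities) and recover $\xi_1^{(k)}=-(b-1)/\xi_2^{(k)}$ from the product of the roots of $\xi^2-2\beta_k\xi-(b-1)=0$. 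The difference is not cosmetic: in the paper's expansion the perturbation $x$ is itself $\cO(b)$, so the discarded second-order term $-x^2/(8a^3)$ is $\cO(b^{-1})$ --- the same order as the terms retained in $\xi_1^{(k)}$ --- and it feeds an $\cO(b^{-2})$ contribution into $\lambda_1^{(k)}$, which is exactly the order the corollary claims to resolve.

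Your Perron-vector check detects this. At $k=0$ one has $\alpha_0=2$ and $\lambda_1^{(0)}=1$ exactly, but the displayed formula in (i) evaluates to $1-\tfrac{1}{b^2-1}+\tfrac{1}{4(b-1)^2}+\tfrac{1}{(b+1)^2}=1+\tfrac{1}{4b^2}+\cO(b^{-3})$. Carrying out your computation gives $\xi_1^{(k)}=1+(\alpha_k-1)b^{-1}+\cO(b^{-2})$ and hence $\lambda_1^{(k)}=1-(2-\alpha_k)b^{-2}+\cO(b^{-3})$, which differs from the statement of (i) by $\tfrac{(\alpha_k-1)^2}{4b^2}$; the stated (ii) is off by the same quantity with opposite sign, since the dropped term enters $\xi_2^{(k)}$ with a minus. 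So your method will not reproduce the corollary verbatim, but that is a defect of the stated coefficients rather than of your argument: the paper's own \cref{thm:fiedler}, which redoes this computation keeping the second-order term of the surd, obtains $1-\lambda_1^{(1)}\approx 4\pi^2 b^2/(q^2(b^2-1)^2)$, in agreement with your expansion and not with part (iii) of the corollary (whose first line would even give $\lambda_2>1$ for large $q$). The qualitative conclusions --- the signs, the $\cO(b^{-2})$ and $\cO(b^{-1})$ scalings, and the order-one constants $C_2,C_q,C_{q+1},C_n$ --- survive, since only the numerical value of the second-order coefficient changes, and your derivation of the final inequalities from the signs of the leading corrections is sound. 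Be sure, as you note, to carry $\beta_k$ and $\xi_2^{(k)}$ to one order beyond what naively seems necessary before inverting.
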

\begin{proof}
These are seen through the formulas in Theorem~\ref{thm:Epairs} and first-order Taylor expansion of $\sqrt{\beta_k^2 + (b-1)}$ about leading asymptotic term $\frac{1}{4}((b^2-1)^2)$ and simplification.   Let $\theta = \sqrt{b-1}$ and $\eta = \sqrt{b+1}$.   Then
$$
\beta_k^2 = \frac{1}{4}\left(  \frac{\alpha_k \theta}{\eta} -\theta \eta + \frac{\eta}{\theta}   \right)^2 = \frac{\alpha_k^2 \theta^2}{4 \eta^2} + \frac{\theta^2 \eta^2}{4} + \frac{\eta^2}{4\theta^2} - \frac{\alpha_k \theta^2}{2} +\frac{\alpha_k}{2} - \frac{\eta^2}{2}.
$$
The first-order Taylor expansion we employ is $\sqrt{a^2+x} = a + \frac{1}{2} a^{-1} x + \cO(a^{-3} x^2)$, which yields
\beqas
\sqrt{\beta_k^2 + \theta^2} & = & \sqrt{\left[\frac{\theta \eta}{2}\right]^2 + \left[ \left(1 - \frac{\alpha_k}{2}\right)\theta^2 - \frac{\eta^2}{2} + \frac{\alpha_k^2 \theta^2 }{4 \eta^2} + \frac{\alpha_k}{2} +\frac{\eta^2}{4 \theta^2} \right]} \\
& = &  \frac{\theta \eta}{2} + \frac{1}{\theta \eta} \left[ \left(1 - \frac{\alpha_k}{2}\right)\theta^2 - \frac{\eta^2}{2} + \frac{\alpha_k^2 \theta^2 }{4 \eta^2} + \frac{\alpha_k}{2} +\frac{\eta^2}{4 \theta^2} \right] + \cO(b^{-2}) \\ 
& = &  \frac{\theta \eta}{2} +  \left(1 - \frac{\alpha_k}{2}\right)\frac{\theta}{\eta} - \frac{\eta}{2 \theta} + \frac{\alpha_k^2 \theta }{4 \eta^3} + \frac{\alpha_k}{ 2 \theta \eta} +\frac{\eta}{4 \theta^3} + \cO(b^{-2}). \\
\eeqas
Now, we see 
\beqas
\xi^{(k)}_1 & = & \beta_k + \sqrt{\beta_k^2+\theta^2} = \frac{1}{2}\left(  \frac{\alpha_k \theta}{\eta} -\theta \eta + \frac{\eta}{\theta}   \right) + \sqrt{\beta_k+\theta^2} \\
& = & \frac{\theta}{\eta} + \frac{\alpha_k^2 \theta }{4 \eta^3} + \frac{\alpha_k}{ 2 \theta \eta} +\frac{\eta}{4 \theta^3} + \cO(b^{-2}), \qquad \mbox{and } \\
\xi^{(k)}_2 & = & \beta_k - \sqrt{\beta_k^2+\theta^2} = \frac{1}{2}\left(  \frac{\alpha_k \theta}{\eta} -\theta \eta + \frac{\eta}{\theta}   \right) - \sqrt{\beta_k+\theta^2} \\
& = & -\theta\eta + \left( \alpha_k - 1\right) \frac{\theta}{\eta} + \frac{\eta}{\theta} - \frac{\alpha_k^2 \theta }{4 \eta^3} - \frac{\alpha_k}{ 2 \theta \eta} - \frac{\eta}{4 \theta^3} + \cO(b^{-2}).
\eeqas 
Then, noting $\eta^{-1} - \theta^{-1} = -2 \eta^{-1}\theta^{-1}$, we see
\beqas
\lambda^{(k)}_1 & = & 1 - \frac{1}{\theta^2} + \frac{\xi_1^{(k)}}{\theta \eta}\\
& = & 1 - \frac{1}{\theta^2} + \frac{1}{\eta^2} + \frac{\alpha_k}{2 \theta^2 \eta^2} + \frac{1}{4 \theta^4} + \frac{\alpha_k^2}{ 4 \eta^4 }\\
& = & 1 - \frac{4 - \alpha_k}{2 \theta^2 \eta^2} + \frac{1}{4 \theta^4} + \frac{\alpha_k^2}{4 \eta^4} + \cO(b^{-3}).
\eeqas 
Substituting in for $\theta$ and $\eta$ gives the result presented in (i).   Similarly, we see (ii) via
$$
\lambda^{(k)}_2  =  1 - \frac{1}{\theta^2} + \frac{\xi_2^{(k)}}{\theta \eta} = \frac{\alpha_k-1}{\eta^2} - \frac{\alpha_k^2}{4 \eta^4} - \frac{\alpha_k}{2\theta^2 \eta^2} - \frac{1}{4 \theta^4} + \cO(b^{-3}).
$$
Lastly, (iii) is seen by plugging in for specific values of $k$ and $\alpha_k = 2 \cos \left( \frac{2 \pi k }{q} \right) $. 
 
\end{proof}

\begin{rem} We observe several facts:

\bit
\item The vector $D^{-1/2 } \sigv{1}{0}$ is the constant vector.   It causes no errors, but does not help to separate any of the cliques. 

\item The vectors $D^{-1/2} \sigv{1,1}{k}$ and $D^{-1/2} \sigv{1,2}{k}$ for $k=\intrange{1}{\ceil{q/2}-1}$ assign the same sign to the \corners{} as the internal members of each block and are associated with positive eigenvalues.   Note that we can consider all these eigenvectors as signal eigenvectors, 
$$
	\cX_{signal} = \mbox{span}\left\{\sigv{1}{1}, \sigv{1}{2}, ... , \sigv{1}{\ceil{q}-1}\right\}.
$$
Because $\cX_{signal}\perp \cX_{noise}$, all sweep cuts of vectors in $\cX_{signal}$ keep internal vertices of each clique together.

\item If $q$ is even, the vector $D^{-1/2} \sigv{2}{q/2}$ has values at the \corners{} of opposite sign to the values of the internal vertices and the sign of each corner oscillates around the ring.
This is the most oscillatory global eigenvector.

\item The vectors $D^{-1/2} \sigv{2,1}{k}$ and $D^{-1/2} \sigv{2,2}{k}$ for $k=\intrange{1}{\ceil{q/2}-1}$ assign
    opposite signs to the \corners{} and the internal members of each block. If vectors make large contributions to the blend we compute, then there is potential to misclassify several of the corner vertices.
 
\eit
\end{rem}

\begin{figure}
\includegraphics[width=3.0in]{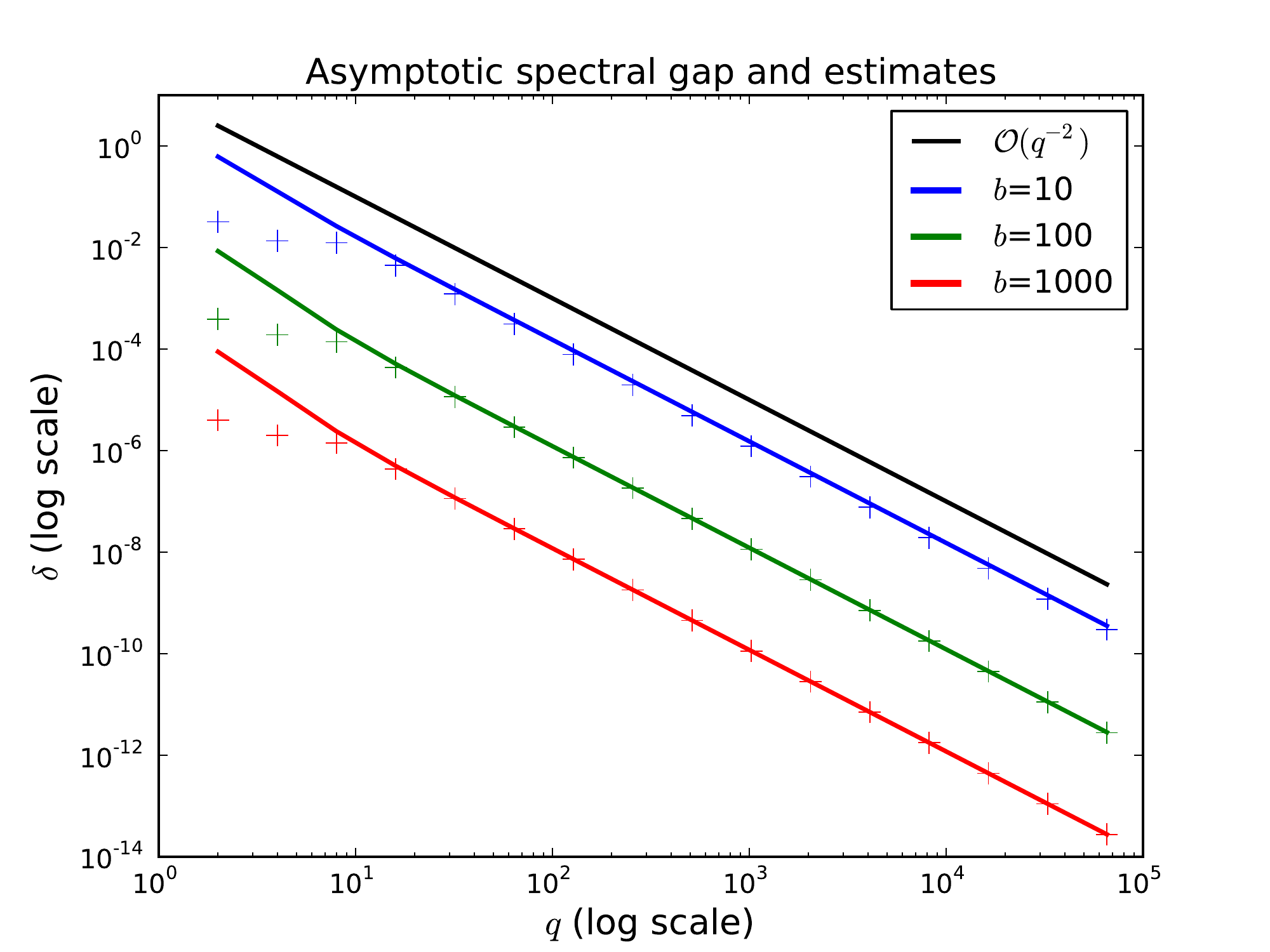}
\includegraphics[width=3.0in]{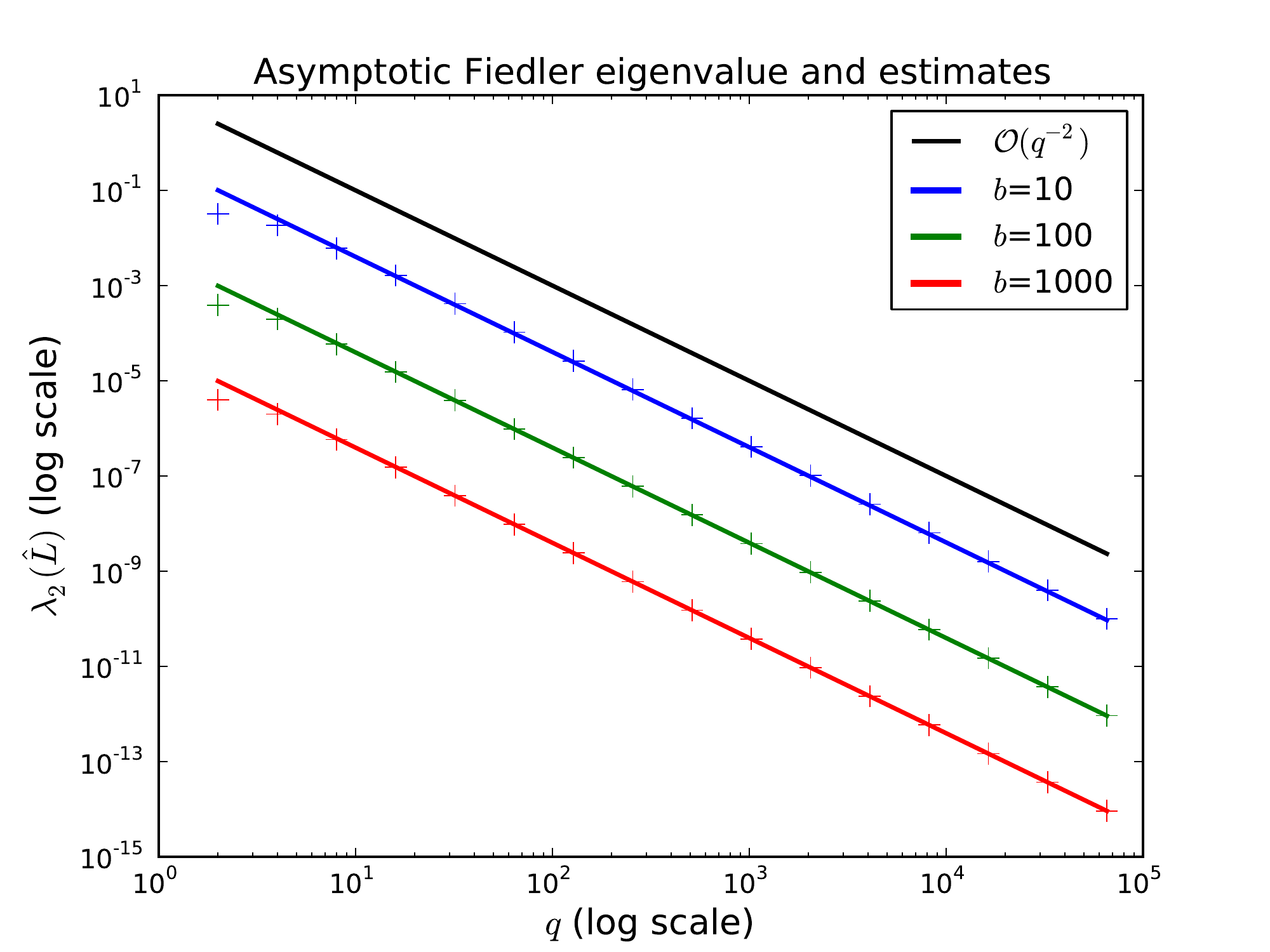}
\caption{Asymptotic estimates of spectral gaps (left) and Fiedler eigenvalues (right) for rings of cliques with parameters $b = 10,100,1000$ and $q=2,4, ..., 8096$.   Lines represent leading-order terms derived in Theorems~\ref{thm:roceigengap} and \ref{thm:fiedler} and '+' represent actual eigenvalues as given by the formulas in Theorem~\ref{thm:Epairs}. }
\label{fig:asymptotic}
\end{figure}

We use the previous result to derive asymptotic estimates of the eigengaps 
associated with eigenvalues near 1 and the size of the Fiedler eigenvalue.
These asymptotic estimates are compact formulas in terms of $b$ and $q$.
Figure~\ref{fig:asymptotic} verifies these estimates empirically.

\begin{thm} {\bf (ROC Asymptotic Eigengap)}
\label{thm:roceigengap}
For the graph $\Roc$ the spectral gap relevant for computing an eigenvector associated with $\lambda_2(\Lh)$ is asymptotically $\bigo{b^{-2} q^{-2}}$ for large $b$ and $q$.
\end{thm}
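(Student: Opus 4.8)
The plan is to identify the relevant eigengap explicitly and then differentiate the closed-form signal eigenvalues from \Cref{cor:evals}. By the identity $\lambda_k(\Lh) = 1 - \lambda_{n-k}(\Ah)$, the Fiedler eigenvector of $\Lh$ associated with $\lambda_2(\Lh)$ is the eigenvector of $\Ah$ belonging to its second-largest eigenvalue. \Cref{thm:Epairs} and \Cref{cor:evals} show that the signal eigenvalues $\lambda_1^{(k)}$ are strictly decreasing in $\alpha_k = 2\cos\left(\frac{2\pi k}{q}\right)$, hence decreasing in $k$, starting from the trivial value $\lambda_1^{(0)} = 1$ carried by $D^{1/2}\bfo$; every non-signal eigenvalue is near $0$ or negative. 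Thus the second-largest eigenvalue of $\Ah$ is $\lambda_1^{(1)}$. Working with the deflated operator $\ahatshifted$ as in the Remark of \Cref{sec:convergesingle} makes $\lambda_1^{(1)}$ extremal, so the eigengap $\delta_1$ of \Cref{thm:singleevecerror} governing convergence to its eigenspace is the distance to the nearest remaining eigenvalue, $\delta_1 = \lambda_1^{(1)} - \lambda_1^{(2)}$.

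Next I would estimate this gap. \Cref{cor:evals}(i) expresses $\lambda_1^{(k)} = g(\alpha_k, b)$ as a function of $b$ and $\alpha_k$ alone, so every $k$-independent term---in particular the $\frac{1}{4(b-1)^2}$ contribution---cancels in the difference $\lambda_1^{(1)} - \lambda_1^{(2)}$. I would show $g$ is $C^1$ in $\alpha$ with a uniformly small derivative: directly from the exact formulas of \Cref{thm:Epairs}, $\frac{\partial \beta_k}{\partial \alpha} = \frac{1}{2}\sqrt{\frac{b-1}{b+1}} = \Theta(1)$, and since $\beta_k \approx -\tfrac{b}{2}$ is bounded away from zero in magnitude, $\frac{\partial \xi_1^{(k)}}{\partial \alpha} = \frac{\partial \beta_k}{\partial \alpha}\left(1 + \frac{\beta_k}{\sqrt{\beta_k^2 + (b-1)}}\right) = \Theta(b^{-1})$ uniformly in $\alpha\in[-2,2]$. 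Dividing by $\sqrt{b^2-1}$ gives $\sup_{\alpha}\left|\frac{\partial g}{\partial \alpha}\right| = \cO(b^{-2})$, matching the leading term $\frac{1}{2(b^2-1)}$ read off from the expansion. The mean value theorem then yields $\left|\lambda_1^{(1)} - \lambda_1^{(2)}\right| \le \sup_{\alpha}\left|\frac{\partial g}{\partial \alpha}\right|\,\left|\alpha_1 - \alpha_2\right| = \cO(b^{-2})\,\left|\alpha_1 - \alpha_2\right|$.

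Finally I would bound $\left|\alpha_1 - \alpha_2\right|$. Applying $\cos A - \cos B = -2\sin\frac{A+B}{2}\sin\frac{A-B}{2}$ with $A = \frac{2\pi}{q}$, $B = \frac{4\pi}{q}$ gives $\alpha_1 - \alpha_2 = 2\left(\cos\frac{2\pi}{q} - \cos\frac{4\pi}{q}\right) = 4\sin\frac{3\pi}{q}\sin\frac{\pi}{q}$, which is $\Theta(q^{-2})$ for large $q$ by the small-angle expansion. Combining with the previous step gives $\delta_1 = \lambda_1^{(1)} - \lambda_1^{(2)} = \cO(b^{-2})\cdot\cO(q^{-2}) = \bigo{b^{-2}q^{-2}}$, as claimed. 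The main obstacle is uniformity in both parameters simultaneously: using only the $\cO(b^{-3})$ remainder of \Cref{cor:evals} would leave a $q$-independent error term that is fatal once $q$ is large, so the whole argument hinges on tying \emph{all} of the $\alpha$-dependence---remainder included---to the single factor $\alpha_1 - \alpha_2 = \Theta(q^{-2})$ via the mean value theorem and the exact derivative bound, rather than reading the estimate off the truncated expansion.
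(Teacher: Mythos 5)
Your proof is correct, but it reaches the bound by a genuinely different route than the paper. The paper expands the difference $\lambda_1^{(1)} - \lambda_1^{(2)} = \frac{\beta_1 - \beta_2 + f(x_1) - f(x_2)}{\sqrt{b^2-1}}$ directly, using a first-order Taylor expansion of $f(x)=\sqrt{x+a}$ with Lagrange remainder together with the cosine expansions $\alpha_k = 2 - 4(\pi k/q)^2 + \cO(q^{-4})$, and then verifies by hand that the two $\cO(b^{-1}q^{-2})$ leading contributions cancel, leaving $\cO(b^{-2}q^{-2})$. You instead observe that all $k$-dependence enters through $\alpha_k$, differentiate the exact formula to get $\frac{\partial \xi_1}{\partial \alpha} = \frac{\partial\beta}{\partial\alpha}\left(1 + \frac{\beta}{\sqrt{\beta^2+(b-1)}}\right)$, and note that since $\beta \approx -b/2$ uniformly for $\alpha\in[-2,2]$, the second factor is $\Theta(b^{-1})$; the mean value theorem then converts $|\alpha_1-\alpha_2|=\Theta(q^{-2})$ into the claimed bound. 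This is arguably cleaner: the delicate cancellation that the paper tracks term-by-term is absorbed into the exact factor $1 + \beta/\sqrt{\beta^2+(b-1)}$, and your closing remark about why the truncated expansion of \Cref{cor:evals} alone would not suffice (its $q$-independent remainder) is exactly the right concern and is what both proofs must, and do, avoid. What the paper's computation buys in exchange is the explicit leading constant of the gap, which feeds the empirical comparison in Figure~\ref{fig:asymptotic}. One small slip: $\lambda_1^{(k)}$ is \emph{increasing} in $\alpha_k$ (both $\frac{\partial\beta}{\partial\alpha}>0$ and $1+\beta/\sqrt{\beta^2+(b-1)}>0$), and it is decreasing in $k$ only because $\alpha_k$ itself decreases in $k$; your conclusion about the ordering is right but the stated monotonicity in $\alpha_k$ has the wrong sign. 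You should also note, as the paper does implicitly, that $\lambda_1^{(1)}$ has multiplicity two, so the nearest \emph{distinct} eigenvalue below it is $\lambda_1^{(2)} = \lambda_4(\Ah)$, which you handle correctly via the eigengap definition.
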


\begin{proof}
Because $\lambda_2(\Ah) = \lambda_3(\Ah)$, the eigenvalues of interest are $\lambda_2(\Ah) =  \lambda^{(1)}_1$ and
$\lambda_4(\Ah) = \lambda^{(2)}_1$.    We will take Taylor expansions and collect the leading-order terms to understand the asymptotic behavior.   Define scalar function $f(x) = \sqrt{x+a}$, for constant $a$ that we define below. Using the formulas in the previous result and a bit of algebra, we see $ \lambda^{(1)}_1 -  \lambda^{(2)}_1 = $
\beq
\label{eqn:egap_roc}
\frac{\xi^{(1)}_1 - \xi^{(2)}_1}{\sqrt{b^2 -1}} = 
 \frac{\beta_1 - \beta_2+\sqrt{\beta_1^2  + (b-1)} - \sqrt{\beta_2^2  + (b-1)} }{\sqrt{b^2 -1}} = \frac{\beta_1 - \beta_2 + f(x_1)-f(x_2)}{\sqrt{b^2 -1}}
\eeq
with 
$$
x_k = \left( \frac{1}{4} - \frac{1}{2(b+1)}\right) \alpha_k^2 + \left(1 - \frac{b}{2}\right) \alpha_k 
\qquad \mbox{and} \qquad
a = \frac{b^2}{4} + \frac{b}{2} - \frac{3}{2} + \frac{1}{2(b-1)}.
$$
We expand the two differences in the numerator of (\ref{eqn:egap_roc}) separately, concentrating on the $f(x_1) - f(x_2)$ first.   First-order Taylor expansion of $f(x)$ at $x_1$ yields
$$
f(x_2) = f(x_1) + f'(x_1)(x_2 - x_1) + \frac{f''(y)}{2} (x_2 - x_1)^2 .
$$
where $y \in (\min(x_1, x_2), \max(x_1, x_2))$.   Rearranging and plugging in, we see
$$
f(x_1) - f(x_2) = f'(x_1)(x_1 - x_2) - \frac{f''(y)}{2} (x_1 - x_2)^2 = \frac{x_1 - x_2}{2\sqrt{x_1 + a}} - \frac{(x_1 - x_2)^2}{4(y + a)^{3/2}}
$$

Further, assume $q >> 4 \pi k$ and use a  third-order Taylor expansion at 0 to see $\alpha_k = 2 - 4(\pi k/q)^2 + {\cal O}(q^{-4})$.   Similarly, $\alpha_k^2 = 4 \cos^2(2 \pi k/q) = 2 (1 + \cos(4 \pi k/q)) = 4 - 16(\pi k/q)^2 + {\cal O}(q^{-4})$.   Thus, 
\beqas
(x_1 - x_2) & =  & \left( \frac{1}{4} - \frac{1}{2(b+1)}\right) \left(\frac{48 \pi^2}{q^2} \right) +  \left(1 - \frac{b}{2}\right) \left(\frac{12 \pi^2}{q^2} \right) + {\cal O}\left( b q^{-4} \right) \\
& = &  \frac{-6 \pi^2 b}{q^2} + \frac{24 \pi^2}{q^2} + {\cal O}\left(b q^{-4} \right).
\eeqas

We return to $\beta_1 - \beta_2$ and expand using the cosine Taylor expansions.

$$
\beta_1 - \beta_2 = \frac{\alpha_1 - \alpha_2}{2}\sqrt{\frac{b-1}{b+1}} = \frac{6\pi^2}{q^2}\sqrt{\frac{b-1}{b+1}} + {\cal O}(q^{-4})
$$

Lastly, $(y + a)^{3/2}$ is ${\cal O}(b^3)$ and $(x_1+ a)^{1/2}$ is ${\cal O}(b)$, so

$$
\lambda^{(1)}_1 -  \lambda^{(2)}_1 = \frac{6 \pi^2 }{q^2 (b + 1)} + \frac{-3 \pi^2 b}{q^2 \sqrt{x_1 + a} \sqrt{b^2 - 1}} + 
\frac{12 \pi^2}{q^2 \sqrt{x_1 + a} \sqrt{b^2 - 1}}
+{\cal O}(b^{-1} q^{-4}).
$$
As $b \rightarrow \infty$ we see $2 b^{-1} \sqrt{x_1 + a} \rightarrow 1$, so the first two terms cancel asymptotically
and the third term is $\bigo{q^{-2}b^{-2}}$.
\end{proof}

\begin{thm} {\bf (ROC Asymptotic Fiedler Eigenvalue)}
\label{thm:fiedler} Let $b$ and $q$ be large and the same order.   For graph $\Roc$ the smallest nonzero eigenvalue of $\Lh$ is ${\cal O}(b^{-2}q^{-2})$.
\end{thm}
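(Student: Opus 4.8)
The plan is to pin down the Fiedler value exactly inside the spectrum of Theorem~\ref{thm:Epairs} and then read its size off the quadratic that those eigenpairs satisfy, rather than off the generic asymptotic expansions of Corollary~\ref{cor:evals}. Since $\Lh = I - \Ah$ and the Perron eigenvalue $\lambda_1(\Ah) = \lambda_1^{(0)} = 1$ is the one producing the zero eigenvalue of $\Lh$, the smallest nonzero eigenvalue of $\Lh$ equals $1 - \lambda_2(\Ah)$, where $\lambda_2(\Ah)$ is the second-largest eigenvalue of $\Ah$. The signal eigenvalues $\lambda_1^{(k)}$ are ordered by $\alpha_k = 2\cos(2\pi k/q)$ and sit just below $1$, while every non-signal and noise eigenvalue is $\bigo{b^{-1}}$ or negative (Theorem~\ref{thm:Xnoise}, Corollary~\ref{cor:evals}); hence $\lambda_2(\Ah) = \lambda_1^{(1)}$ (with multiplicity two), and the quantity to bound is $\lambda_2(\Lh) = 1 - \lambda_1^{(1)}$.

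First I would recall that for $k=1$ the local scalar $\xi$ and eigenvalue $\lambda$ obey the two relations (\ref{eqn:nleqs1})--(\ref{eqn:nleqs2}) established in the proof of Theorem~\ref{thm:Epairs}. Setting $\mu = 1 - \lambda$ and eliminating $\xi$ collapses this pair into the single quadratic
\[
  (b^2-1)\,\mu^2 - \big[(b^2+b) - (b-1)\alpha_1\big]\,\mu + (2-\alpha_1) = 0,
\]
whose two roots are precisely $1 - \lambda_1^{(1)}$ and $1 - \lambda_2^{(1)}$. The Fiedler value is the smaller root, and instead of solving the quadratic I would use Vieta: the product of the roots is the constant term over the leading coefficient, $(2-\alpha_1)/(b^2-1)$.

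The decisive observation is that this constant term is tiny: $2 - \alpha_1 = 2 - 2\cos(2\pi/q) = 4\sin^2(\pi/q) = \bigo{q^{-2}}$. The sum of the roots, also supplied by Vieta, is $\big[(b^2+b)-(b-1)\alpha_1\big]/(b^2-1) \to 1$ as $b \to \infty$, so since the smaller root tends to $0$ the larger root $1 - \lambda_2^{(1)}$ tends to $1$. Dividing the product by this larger root isolates the Fiedler value,
\[
  \lambda_2(\Lh) = 1 - \lambda_1^{(1)} = \frac{2-\alpha_1}{(b^2-1)\,(1-\lambda_2^{(1)})} \sim \frac{4\sin^2(\pi/q)}{b^2-1} \sim \frac{4\pi^2}{b^2 q^2},
\]
which is $\bigo{b^{-2}q^{-2}}$ for large $b,q$ of the same order.

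The main obstacle here is conceptual, not computational: one must avoid the naive leading-order expansion of $\lambda_1^{(k)}$, whose $b^{-2}$ coefficient does not visibly vanish at $\alpha_1 = 2$, and instead exploit the exact cancellation carried by the constant term $2-\alpha_1$ of the quadratic. It is exactly because $\alpha_1 \to 2$ as $q \to \infty$ that this constant collapses to $4\sin^2(\pi/q) = \bigo{q^{-2}}$, upgrading the crude $\bigo{b^{-2}}$ bound to $\bigo{b^{-2}q^{-2}}$; as a sanity check, at $k=0$ the same quadratic has constant term $2 - \alpha_0 = 0$, forcing the root $\mu = 0$ in agreement with $\lambda_1^{(0)} = 1$. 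The only remaining care is confirming the larger root stays bounded away from zero so that Vieta's product cleanly recovers the small one, which is immediate from the sum of roots tending to $1$.
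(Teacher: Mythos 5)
Your proposal is correct, and it reaches the stated bound by a genuinely different route than the paper. The paper works directly from the closed-form $\xi_1^{(1)} = \beta_1 + \sqrt{\beta_1^2+(b-1)}$, Taylor-expands $g(z)=\sqrt{z+1/4}$ to several orders, and verifies term by term that the $\cO(1)$, $\cO(b^{-1})$, and most of the $\cO(b^{-2})$ contributions cancel, leaving $\frac{4\pi^2 b^2}{q^2(b^2-1)^2}$; the cancellations are genuine but require careful bookkeeping across three separate expansions. You instead eliminate $\xi$ from the pair of relations in the proof of Theorem~\ref{thm:Epairs} to obtain the quadratic $(b^2-1)\mu^2 - \left[(b^2+b)-(b-1)\alpha_1\right]\mu + (2-\alpha_1)=0$ in $\mu = 1-\lambda$ (I verified this elimination: the constant term does come out to exactly $2-\alpha_k$ and the linear coefficient to $b^2+b-(b-1)\alpha_k$), and then read the small root off Vieta's product $(2-\alpha_1)/(b^2-1) = 4\sin^2(\pi/q)/(b^2-1)$ divided by the large root, which the sum of roots pins near $1$. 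This packages the entire cancellation the paper labors over into the single identity $2-\alpha_1 = 4\sin^2(\pi/q)$, is cleaner and less error-prone, yields the same leading constant $4\pi^2/(b^2q^2)$, and comes with the nice consistency check that $k=0$ forces the root $\mu=0$. What the paper's approach buys in exchange is uniformity: the same expansion machinery produces all four asymptotic estimates of Corollary~\ref{cor:evals} and the eigengap of Theorem~\ref{thm:roceigengap}, whereas your Vieta argument is tailored to the product of the two roots belonging to a fixed $k$. Your identification of $\lambda_2(\Ah)$ with $\lambda_1^{(1)}$ and the observation that the larger root is at least half the (near-unit) sum of two positive roots are both sound.
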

\begin{proof}
The eigenvalue of interest is $\lambda_2(\Lh) = 1- \lambda_2(\Ah) =  1-\lambda^{(1)}_1$.    Define scalar function $g(z) = \sqrt{z+1/4}$.   Using the formulas in Theorem~\ref{thm:Epairs} and a bit of algebra, we see
\beq
\label{eqn:eval_roc}
1 - \lambda^{(1)}_1= 
\frac{1}{b-1} - \frac{\xi^{(1)}_1}{\sqrt{b^2 -1}} =  
 \frac{1}{b-1}  - \frac{\beta_1 + \sqrt{\beta_1^2  + (b-1)}}{\sqrt{b^2 -1}} = \frac{1}{b-1} - \frac{\beta_1}{\sqrt{b^2 -1}} - g(z)
\eeq
with 
$$
z = \frac{(1-\alpha_1)b}{2(b^2 -1)} + \frac{2\alpha_1-3}{2(b^2 -1)} +  \frac{1}{4(b-1)^2} + \frac{\alpha_1^2}{4(b+1)^2}.
$$
We derive the result by demonstrating that the larger terms in (\ref{eqn:eval_roc}) cancel.   Expanding the second term in in the right-hand-side yields
$$
\frac{\beta_1}{\sqrt{b^2 -1}} = \frac{\alpha_1}{2(b+1)} - \frac{1}{2} + \frac{1}{2(b-1)} = -\frac{1}{2}  + \frac{(1+\alpha_1)b}{2(b^2 - 1)} + \frac{1-\alpha_1}{2(b^2 - 1)} .
$$

A second-order Taylor expansion of $g(z)$ at zero shows for each $z$, we see
$g(z) = g(0) + g'(0) z + \frac{1}{2}g''(0) z^2 + \frac{1}{6}g'''(0) z^3+ \frac{1}{24}g^{(iv)}(y) z^4= \frac{1}{2} + z - z^2 +2 z^3 + {\cal O}(z^4) $, where  
$$
g'(z) = \frac{1}{2\sqrt{z + 1/4}} ,
\qquad
 g''(z) = -\frac{1}{4 (z + 1/4)^{3/2}},
\qquad \mbox{and} \qquad
 g'''(z) = \frac{3}{8 (z + 1/4)^{5/2}}.
$$ 
Plugging in, we see the terms of $g(z)$ up to order $b^{-2}$ are
$$
\frac{1}{2} + \frac{(1-\alpha_1)b}{2(b^2 -1)} 	
 + \frac{2\alpha_1-3}{2(b^2 -1)} +  \frac{1}{4(b-1)^2} + \frac{\alpha_1^2}{4(b+1)^2}
- \frac{(1-\alpha_1)^2 b^2}{4(b^2 -1)^2}.
$$

The constant terms in $\beta_1/\sqrt{b^2 -1}$ and $g(z)$ cancel.   The order $b^{-1}$ terms in $1 - \lambda^{(1)}_1$ cancel to an order $b^{-2}$ term,
$$
\frac{1}{b-1}  -\frac{(1+\alpha_1)b}{2(b^2 - 1)} - \frac{(1-\alpha_1 )b}{2(b^2 - 1)} 
= \frac{1}{b-1}  -\frac{b}{b^2 - 1}  = \frac{1}{b^2 - 1} .
$$
Combining fractions the order $b^{-2}$ terms in $1 - \lambda^{(1)}_1$, are reduced to 
\beqas
&&  \frac{1}{b^2 - 1} - \left[ 
 \frac{1-\alpha_1}{2(b^2 - 1)} +
 \frac{2\alpha_1-3}{2(b^2 -1)} +  \frac{1}{4(b-1)^2} + \frac{\alpha_1^2}{4(b+1)^2}
- \frac{(1-\alpha_1)^2 b^2}{4(b^2 -1)^2}
 \right] \\
 & = & \frac{(2 -  \alpha_1) b^2}{(b^2 - 1)^2} + \frac{(2 \alpha_1^2 -3) b}{2(b^2 - 1)^2} + \frac{-\alpha_1^2 + 2\alpha_1 - 9}{4(b^2-1)^2} \\
\eeqas
Factoring in the other $b^{-3}$ terms and the cosine expansion $\alpha_1 = 2-4(\pi /q)^2+{\cal O}(q^{-4})$, we see
$$
\frac{4 \pi^2 b^2}{q^2 (b^2 - 1)^2} + {\cal O}\left(b^{-4} +  b^{-2} q^{-4} \right).
$$
\end{proof}

\newcommand{\condx}{\conductance{\Dneghalf\bx}}
\begin{thm}
    For any vector $\bx\in\blendspacemat$ of $\Roc$, $ \condx \le q\conductance{\fiedvec}$.
\end{thm}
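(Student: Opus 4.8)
The plan is to reduce the statement to a cut‑counting argument on the ring $\Z_q$ and then exhibit a single balanced sweep cut whose conductance already beats $q\,\conductance{\fiedvec}$, with the excess margin used to absorb a technical nuisance about corner vertices.

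First I would exploit the spectral structure established in \Cref{thm:Xnoise} and \Cref{thm:Epairs}. Since $\bx\in\cX_{signal}$ and $\cX_{signal}\perp\cX_{noise}$, the vector $\Dneghalf\bx$ is constant on the $b-1$ internal vertices of every clique (they all share degree $b-1$). Consequently every sweep cut $\sweepcut{\Dneghalf\bx}{t}$ keeps the internal vertices of each clique on a single side, so its edge boundary consists only of ring edges together with, for each clique whose corner lands opposite its internal vertices, the $b-1$ corner–internal edges. In particular, for any threshold that keeps every corner with its clique, the cut is a union of whole cliques and its boundary is at most the $q$ ring edges.

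Next I would pin down $\conductance{\fiedvec}$. The Fiedler vector is the $k=1$ signal eigenvector, whose block signal $\cosvec{1}{q}$ completes one full period around the ring; hence each of its sweep cuts is a single contiguous arc cutting exactly two ring edges, and the minimum over thresholds is attained at the balanced arc of $q/2$ cliques. Writing $V_c=\tfrac{b(b-1)}{2}$ for the internal volume of one clique, this gives $\conductance{\fiedvec}\approx 2\big/\big(\tfrac{q}{2}V_c\big)$, so $q\,\conductance{\fiedvec}\approx 4/V_c$. Now for the given $\bx$ I would threshold near the median of the $q$ internal block values, producing a whole‑clique cut with $c=\lfloor q/2\rfloor$ cliques on its smaller side and at most $q$ cut ring edges. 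Because that side has volume at least $cV_c\approx\tfrac{q}{2}V_c$ while the boundary is at most $q$, this sweep cut has conductance at most $q\big/\big(\tfrac{q}{2}V_c\big)=2/V_c=\tfrac12\,q\,\conductance{\fiedvec}$. Since $\condx$ is the minimum over all thresholds, $\condx\le 2/V_c\le q\,\conductance{\fiedvec}$. Equivalently, the decisive comparison is $\conductance{S}/\conductance{\fiedvec}=(\text{ring edges cut})\,q/(4c)$, which at the median is at most $q\cdot q/(4\cdot q/2)=q/2$; the ring‑edge count and the balance together produce the factor $q/2$, leaving a factor‑of‑two margin inside the claimed $q$.

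The main obstacle is guaranteeing that the chosen balanced threshold does not separate too many corners from their cliques, since each separated corner adds $b-1$ to the boundary and would destroy the estimate that the boundary is at most $q$; the margin only tolerates a budget of $q/(b-1)$ separated corners. I expect to resolve this through the near‑proportionality of corner and internal values: within clique $j$ the internal value is $\pervec{k}{q}_j(b-1)^{-1/2}$ and the corner value is $\xi_1^{(k)}\pervec{k}{q}_j(b+1)^{-1/2}$, both carried by the \emph{same} periodic factor with $\xi_1^{(k)}>0$, and \Cref{cor:evals} shows the $\xi_1^{(k)}$ differ from one another only by $O(b^{-1})$. Hence the corner signal is a small perturbation of a positive rescaling of the internal signal, the interval between the two values at each clique is short, and a threshold at the median level can be nudged into a gap that separates no corner while keeping the smaller side of size close to $q/2$; any residual separated corners stay within the $q/(b-1)$ budget. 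Carrying out this perturbation bookkeeping cleanly is the one place where care is required, while the combinatorial heart of the argument is simply that a whole‑clique sweep cut cuts at most $q$ ring edges whereas the Fiedler cut cuts only two.
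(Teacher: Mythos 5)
Your overall strategy --- reduce to a cut-counting argument on whole-clique sweep cuts and compare against the two-edge Fiedler arc --- is the same as the paper's, and your identification of $\conductance{\fiedvec}$ with the balanced arc (conductance $\approx 4/(q\,b(b-1))$ with the paper's volume convention) is correct. However, there is a genuine gap in the step where you "threshold near the median of the $q$ internal block values, producing a whole-clique cut with $c=\lfloor q/2\rfloor$ cliques on its smaller side." The block-value vector of a blend in $\cX_{signal}$ ranges over (essentially) the whole orthogonal complement of $\ones_q$ in $\mathbb{R}^q$, so it can be as skewed as $(q-1,-1,-1,\dots,-1)$. For such a vector the only nontrivial whole-clique sweep cuts have a \emph{single} block on the smaller side: no threshold yields a balanced cut, and your estimate "$\text{cut}\le q$ ring edges over volume $\ge \tfrac{q}{2}V_c$" is simply unavailable. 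Since your argument exhibits one specific threshold and bounds its conductance, the non-existence of that threshold breaks the proof, even though the theorem's conclusion still holds in this example.

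The fix is exactly the paper's counting, which is local rather than global: for \emph{any} threshold whose sweep cut is a union of $c$ whole cliques on the smaller side, each such clique has only two ring edges leaving it, so $E(S,\bar S)\le 2c$ while $\min(\vol{S},\vol{\bar S})$ is $c$ times the volume of one clique. The ratio $2c/(c\cdot\bchoosetwo)$ is independent of $c$, so every whole-clique sweep cut --- balanced or not --- has conductance at most $2\bchoosetwo^{-1}=q\conductance{\fiedvec}$, and no choice of a special threshold is needed. (Your global bound of $q$ cut ring edges is tight only for the alternating assignment, which is precisely the paper's tightness example; the per-block bound $2c$ interpolates correctly between that case and the single-clique case.) On the corner vertices: your concern is legitimate --- for a blend the corner signal is not an exact positive rescaling of the internal signal since the $\xi_1^{(k)}$ vary with $k$ --- but the paper explicitly declares that it omits corner handling and treats the eigenvectors as block-constant, so your budget argument, while more careful than the paper, is not where the proof needs repair.
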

\begin{proof}
    For any $\bx \in \cX_2$, let $S, \bar{S}$ be the partition given by the optimal sweep cut of $\Dneghalf\bx$.
    Fiedler's nodal domain theorem implies at least one of $S, \bar{S}$ is a connected component.
    Because the eigenvectors are block constant, all vertices of each clique are assigned to the same side of the
    partition.
    These imply that $E\paren{S,\bar{S}}=2$. 
    The optimal conductance partition is found when $\vol{S}=\vol{\bar{S}}= q \bchoosetwo$.
    Thus $\condG = \conductance{\bv} = 2\paren{q \bchoosetwo}^{-1}$.

    For any $\bx\in\blendspacemat$, the optimal sweep cut of $\Dneghalf\bx$ will partition the graph into two pieces one containing $k$ blocks
    and the other containing $n-k$ blocks for some $k\le \frac{n}{2}$. 
    That is $\min\paren{{\vol{S}},{\vol{\bar{S}}}}=k\bchoosetwo$.
    Since only edges between cliques are cut, $E(S,\bar{S}) \le 2k$.
    Thus $\condx \le 2\bchoosetwo^{-1}$. By assigning adjacent blocks to alternating sides of the partition,
    we see the bound is tight.

\end{proof}

Notice that the smallest eigenvalue of $\Lh$ scales as $\bigo{b^{-2}q^{-2}}$,
  but the optimal conductance scales as $\bigo{b^{-2}q^{-1}}$,
  and that the worst case sweep cut partition of a blend has conductance $2\bchoosetwo^{-1}$ independent of $q$.
The remainder of this section shows that by accepting this factor of $q$ in conductance, one gains tremendously in
computational efficiency.

\newcommand{\orthproj}{\paren{{\bf I}-\spectralproj}}
\newcommand{\spectralproj}{\Proj{\blockconstspace}}
\newcommand{\minperbub}{b^{-\half}}
\subsection{A residual tolerance for \roc{}}\label{sec:rocbygeneral}
In order to derive a residual tolerance for the \roc{}, we show that for any vector in $\blendspace$
at least one block is sufficiently far from the other blocks in spectral coordinates. 
\begin{lemma}\label{thm:mus-apart}
    Define $\block{i} = \set{ib+2\dots ib+q}$ as the vertex blocks of $\Roc$.
    For any vector $\bx$, let $\mean{j} = \abs{\block{i}}^{-1}\sum_{i\in\block{j}}x_i$.
    Let $\blockconstspace$ be the span of 
    $\set{\be_{ib+1}\mid i \in 0\dots q-1} \cup \set{\blockindicator{i}{q}\mid  i \in 0\dots q-1}$.  
    For any vector $\bx\in \blockconstspace$, $\norm{\means}_\infty > n^{-1/2}\norm{\bx}_2$.
\end{lemma}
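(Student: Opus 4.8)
The plan is to convert the statement into an elementary comparison between the Euclidean norm of $\bx$ and its largest block mean, exploiting that every vector in $\blockconstspace$ is constant on the interior of each clique. First I would coordinatize $\bx\in\blockconstspace$ by the $q$ corner values $c_0,\dots,c_{q-1}$ (its entries on the corner vertices $ib+1$) together with the $q$ interior values $m_0,\dots,m_{q-1}$ (its common value on the $b-1$ interior vertices of block $i$, which is well defined precisely because $\bx$ is a combination of the corner indicators $\be_{ib+1}$ and the interior indicators $\be_{\block{i}}$). Since $\bx$ is constant on $\block{i}$, the block mean is $\mean{j}=m_j$, so $\means=(m_0,\dots,m_{q-1})$ and $\norm{\means}_\infty=\max_j\abs{m_j}$, and the claim becomes $\max_j m_j^2 > n^{-1}\norm{\bx}_2^2$.

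Next I would expand the squared norm by separating corner and interior contributions, recording that each interior value is repeated on $b-1$ vertices,
\[
\norm{\bx}_2^2=\sum_{j=0}^{q-1}c_j^2+(b-1)\sum_{j=0}^{q-1}m_j^2 .
\]
Writing $n=bq$, the target $\norm{\bx}_2^2\le n\,\norm{\means}_\infty^2$ is equivalent to
\[
\sum_j c_j^2\le q\,\norm{\means}_\infty^2+(b-1)\paren{q\,\norm{\means}_\infty^2-\sum_j m_j^2} .
\]
The right-hand side is a sum of two nonnegative surpluses: a baseline $q\,\norm{\means}_\infty^2$ coming from the single corner vertex per block, and an amplitude surplus $(b-1)\paren{q\,\norm{\means}_\infty^2-\sum_j m_j^2}$ that is large precisely when $\means$ is spread out rather than flat. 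So the whole statement reduces to absorbing the corner energy $\sum_j c_j^2$ into these surpluses.

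The step I expect to be the main obstacle is exactly this control of the corner energy, because the corner and interior values of $\bx$ are not related block-by-block but only through the global blend. To break this I would pass to the eigenbasis of \Cref{thm:Epairs}: each signal direction has the form $\pervec{k}{q}\otimes(\xi_1^{(k)}\be_1+\bg_1)$, so writing $\bx$ in this basis its interior values are $\means=\sum_k a_k\,\pervec{k}{q}$ while its corner values carry the extra diagonal scaling, $\sum_k \xi_1^{(k)}a_k\,\pervec{k}{q}$, where $\pervec{k}{q}$ ranges over the orthogonal modes $\cosvec{k}{q},\sinvec{k}{q}$. By Parseval this turns both energies into weighted sums of the same nonnegative mode energies $w_k$, giving $\sum_j c_j^2=\sum_k (\xi_1^{(k)})^2 w_k$ and $\sum_j m_j^2=\sum_k w_k$, so it suffices to bound the corner ratios $(\xi_1^{(k)})^2$. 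The asymptotics of \Cref{cor:evals} supply these, with $\xi_1^{(k)}\le\sqrt{(b+1)/(b-1)}=1+\bigo{b^{-1}}$ and $\xi_1^{(k)}<1$ for the higher frequencies. The delicate point, and the crux of the argument, is that a near-$\pm1$ interior pattern that is nonetheless concentrated at low frequencies (a square wave around the ring) carries essentially no amplitude surplus yet has the largest corner ratios; this is where the constant $n^{-1/2}$ is tightest, and closing the inequality uniformly over the subspace requires playing the small excess $\xi_1^{(k)}>1$ at low frequencies against the amplitude surplus wherever $\means$ is smooth. I would finish by verifying the two extreme regimes — a single low-frequency mode and a spiky interior pattern — to confirm the strict inequality $\norm{\bx}_2^2<n\,\norm{\means}_\infty^2$.
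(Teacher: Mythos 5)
Your instinct that the corner vertices are where this lemma lives or dies is sound, and your energy bookkeeping $\norm{\bx}_2^2=\sum_j c_j^2+(b-1)\sum_j m_j^2$ with $n=bq$ is exactly the right reduction. But the proposal cannot be completed, for two concrete reasons. First, the route you choose for controlling $\sum_j c_j^2$ --- expanding in the signal eigenbasis $\pervec{k}{q}\otimes(\xi_1^{(k)}\be_1+\bg_1)$ --- only reaches the $q$-dimensional subspace $\cX_{signal}$, whereas the space $\blockconstspace$ named in the lemma is $2q$-dimensional: it also contains the eigenvectors $\pervec{k}{q}\otimes(\xi_2^{(k)}\be_1+\bg_1)$, whose corner-to-interior ratio $\xi_2^{(k)}$ is of order $-b$. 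For $\bv_2^{(0)}=(b-1)^{3/2}(\bfo_q\otimes\be_1)-\sqrt{b+1}(\bfo_q\otimes\bg_1)$ one gets $\norm{\bx}_2^2=q(b-1)(b^2-b+2)$ against $n\norm{\means}_\infty^2=bq(b+1)$, so the claimed inequality fails by a factor of order $b$; no rearrangement of surpluses can absorb that corner energy. Second, even on $\cX_{signal}$ you stop at the step you yourself identify as the crux, and that step cannot be closed as stated: already $\bx=\bv_1^{(0)}=\sqrt{b+1}(\bfo_q\otimes\be_1)+\sqrt{b-1}(\bfo_q\otimes\bg_1)$ gives $\norm{\bx}_2^2=q(b^2-b+2)>q(b^2-b)=n\norm{\means}_\infty^2$. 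So an honest pursuit of your plan ends in a counterexample, not a proof; the inequality survives only with an $\bigo{1}$ constant in front, or if the corners are folded into the block means, or if one restricts to $\Dneghalf$-scaled signal vectors where the corner entries are attenuated by the factor $\sqrt{(b-1)/(b+1)}$.

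For comparison, the paper's own proof is two lines and sidesteps all of this by fiat: it asserts that $\bx\in\blockconstspace$ equals $\mean{i}$ on every vertex of $\block{i}$, concludes $\norm{\bx}_\infty=\norm{\means}_\infty$, and finishes with $\norm{\bx}_2\le\sqrt{n}\,\norm{\bx}_\infty$. The corner vertices belong to $\blockconstspace$ but to no $\block{i}$, so they are simply invisible to $\means$, and the asserted equality of sup norms is exactly the point your analysis shows can fail. Your proposal is therefore more faithful to the statement's actual content than the paper's argument is, but as a deliverable it is an unfinished plan whose sticking point is a defect of the lemma itself rather than something a sharper estimate will remove; the constructive outcome of your work should be a corrected statement (a constant in front of $n^{-1/2}$, or a restriction of $\blockconstspace$) rather than a longer proof of this one.
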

\begin{proof}
    By construction of $\bx$, $\mean{i} = x_j$ for all $j\in\block{i}$.
    Thus $\norm{\bx}_\infty = \norm{\means}_\infty$.
    Equivalence of norms implies $\norm{\bx}_2 < \sqrt{n}\mean{q}$.
\end{proof}

We are able to apply \cref{thm:blendspaceconductance} and derive a residual tolerance for recovering the \roc{}. 
\begin{coro}\label{thm:roc-residbound-general}
    If $\bx$ is an approximate eigenvector of $\Roc$ with eigenresidual less than $\frac{C}{q\sqrt{n}}$ for some constant $C$ and $\bx^t\ones=0$,
    then $\conductance{\Dneghalf\bx}\le 2 \bchoosetwo^{-1}$
\end{coro}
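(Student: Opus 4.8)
The plan is to invoke \cref{thm:blendspaceconductance} with target conductance $\psi = 2\bchoosetwo^{-1}$ and with $V$ spanned by the signal eigenvectors orthogonal to the constant vector (the exclusion of the constant direction is exactly what the hypothesis $\bx^t\ones = 0$ supplies). That theorem certifies $\conductance{\Dneghalf\bx}\le\psi$ as soon as the eigenresidual drops below $\frac{1}{\rtwo}\dminmax\,\blendgap\,g$, so it suffices to show this threshold is at least $C/(q\sqrt n)$ for a suitable constant $C$. I would bound the three factors separately and collect orders of magnitude: two of them are $\Theta(1)$, and only the subspace sensitivity $g$ contributes the $1/(q\sqrt n)$ scaling.

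The two easy factors are immediate from the structure. The \roc{} has internal vertices of degree $b-1$ and corners of degree $b+1$, so $\dminmax = \sqrt{(b-1)/(b+1)}$ is bounded below by a positive constant for all $b\ge 2$. For the blend gap, since $p=1$ we have $\blendgap \ge \lambda_q - \lambda_{q+1}$, and \cref{cor:evals} gives $\lambda_q = 1 - \bigo{b^{-2}}$ while $\lambda_{q+1} = \bigo{b^{-1}}$, so $\blendgap$ is likewise bounded below by a constant for large $b$. Hence, up to constants, the tolerance is governed entirely by $g$.

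The heart of the argument is the lower bound $g \gtrsim 1/(q\sqrt n)$. Every unit vector $\bv$ in the scaled signal subspace lies in $\blockconstspace$, so \cref{thm:mus-apart} yields $\norm{\means}_\infty > n^{-1/2}$: some block mean exceeds $n^{-1/2}$ in magnitude. Because the constraint $\bx^t\ones=0$ removes the constant component, the block means are balanced (both signs occur), so the sorted means span a range of at least $n^{-1/2}$; with only $q$ of them, a pigeonhole argument on the $q-1$ consecutive gaps forces one gap of size at least $n^{-1/2}/(q-1)$. Any threshold placed inside that gap respects every clique boundary, so the induced sweep cut leaves each clique intact and therefore has conductance at most $\psi$; indeed the block-constant conductance bound established for $\blendspacemat$ shows every such vector satisfies $\conductance{\Dneghalf\bv}\le q\,\conductance{\fiedvec} = 2\bchoosetwo^{-1}$. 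Consequently $\gv = \max_{t\in T_\psi(\bv)}\min_i\abs{v_i-t} \ge \half\cdot\frac{n^{-1/2}}{q-1}$, and minimizing over unit $\bv$ gives $g \gtrsim 1/(q\sqrt n)$.

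Combining the three bounds shows $\frac{1}{\rtwo}\dminmax\,\blendgap\,g \ge C/(q\sqrt n)$, so a residual below $C/(q\sqrt n)$ meets the hypothesis of \cref{thm:blendspaceconductance} and the conclusion follows. I expect the main obstacle to be making the sensitivity bound fully rigorous. One must (i) correctly excise the constant eigenvector so that $g>0$, which is precisely where $\bx^t\ones=0$ enters and where the balancedness of the block means is justified; (ii) handle the fact that within each clique the corner and internal vertices carry slightly different entries, so that admissible thresholds must also avoid the narrow corner/internal gap and the sweep cut genuinely preserves each clique; and (iii) confirm that the in-gap threshold actually lands in $T_\psi(\bv)$ with the claimed margin. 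The pigeonhole step is clean, but pinning down the constant $C$ and verifying that every good threshold stays good under the perturbation guaranteed by \cref{thm:blenderror} is the delicate part.
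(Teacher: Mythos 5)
Your proposal follows essentially the same route as the paper's proof: invoke \cref{thm:blendspaceconductance} with $\psi = 2\bchoosetwo^{-1}$, lower-bound the sensitivity $g$ via \cref{thm:mus-apart} and a pigeonhole argument on the sorted block means to get $g \gtrsim (q\sqrt{n})^{-1}$, and absorb the $\Theta(1)$ factors $\dminmax$ and $\blendgap$ into the constant $C$. In fact you spell out several steps (the explicit pigeonhole over the $q-1$ gaps, the factor of $\half$ from centering the threshold, and the corner-vertex caveat) that the paper's one-line argument leaves implicit.
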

\begin{proof}
In the setting of \cref{thm:blendspaceconductance}, 
  choose $G=\Roc$, $\psi=2 \bchoosetwo^{-1}$.
In the notation of \cref{thm:mus-apart} applied to sorted $\bv$,
for all  $\bv \in V$, $g_v = \max_i \paren{\mean{i+1}-\mean{i}} \ge q^{-1}\norm{\means}_\infty \ge \paren{q\sqrt{n}}^{-1}$. 
For some $C\in\bigo{1}$, $\blendgap g > \frac{C}{q\sqrt{n}}$.
So \Cref{thm:blendspaceconductance} implies computing $\bx$ to a residual tolerance of $\frac{C}{q\sqrt{n}}$ is sufficient to guarantee $\conductance{\bx} \le 2\bchoosetwo^{-1}$.
\end{proof}
\Cref{thm:roc-residbound-general} gives a sufficient condition on approximate eigenvectors of $\Roc$ such that $\bx$ partitions the graph at least as well as any partition that recovers the cliques.
\Cref{thm:minimal-perturbation-lb} and \Cref{thm:minimal-perturbation-ub} using analysis specialized for $\Roc$ in
\cref{sec:roc-perturbation} to construct the minimal perturbation that causes the sweep cut proceedure to fail.



\subsection{Minimal Perturbation}\label{sec:roc-perturbation}
We want to find the minimal error at which a vector can make a mistake.
The effects of the corner vertices only enter into the constants of the following results,
  and for clarity of exposition we omit handling of the corner vertices.
\Cref{thm:minimal-perturbation-lb} shows that no perturbation with norm less than $\minperbfrac$ can induce a mistake in the sweep cut.
\Cref{thm:minimal-perturbation-ub} constructs a perturbation that induces a mistake in the sweep cut and has norm less than $\minperbub$.
For the parameter regime $q\in\bigo{1}$, the bounds in \Cref{thm:roc-residbound-general}, \Cref{thm:minimal-perturbation-lb}, and \Cref{thm:minimal-perturbation-ub} are all equivalent up to constant factors.
Using the same notation as \cref{thm:mus-apart},
we say that a vector $\by$ recovers all the cliques of $\Roc{}$
    if there is a threshold $t\in\opintervalop{\minelt{y}}{\maxelt{y}}$ 
    such that for all $\block{i}$ with $j,k\in\block{i}$,
    $y_{j} < t$ if and only if  $y_{k} < t$.

\begin{thm}\label{thm:minimal-perturbation-lb}
    Let $\blockconstspace$ be defined as in \cref{thm:mus-apart},
    and let $\Proj{\blockconstspace}$ be the orthogonal projector onto $\blockconstspace$. 
For any vector $\by$ orthogonal to $\ones$, define $\errs=$ $\orthproj\by$.
If $\norm{\errs}_2\le\minperbfrac\norm{\by}_2$,
    then $\by$ recovers all the cliques of $\Roc{}$.
\end{thm}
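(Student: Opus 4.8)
The plan is to prove the contrapositive: if the sweep cut of $\by$ splits some clique at \emph{every} admissible threshold, then the within-block energy $\norm{\errs}_2$ must already exceed $\minperbfrac\norm{\by}_2$. First I would decompose $\by = \bx + \errs$ with $\bx = \Proj{\blockconstspace}\by$ and $\errs = \orthproj\by$, so that $\norm{\bx}_2^2 = \norm{\by}_2^2 - \norm{\errs}_2^2$. Since $\blockconstspace$ contains every corner cardinal vector and every internal block-indicator $\blockindicator{i}{q}$, the residual $\errs$ vanishes on the corners and sums to zero on the internal vertices of each block; in particular $\bx$ is constant on each block's internal vertices with that constant equal to the block mean $\mean{i}$, and \cref{thm:mus-apart} gives $\norm{\means}_\infty > n^{-1/2}\norm{\bx}_2$. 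Because $\by\perp\ones$, in the corner-free reduction the internal means sum to zero and hence take both signs, so the spread of the means satisfies $\max_i \mean{i} - \min_i \mean{i} \ge \norm{\means}_\infty$.

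For each block let $I_i = \clintervalcl{\mean{i} + \min_{j\in\block{i}}\err{j}}{\mean{i} + \max_{j\in\block{i}}\err{j}}$ be the range of the entries of $\by$ on that block, and let $s_i$ denote its length. A threshold $t$ fails to keep block $i$ together exactly when $t$ lies in $I_i$ (up to endpoints), so ``$\by$ does not recover the cliques'' means that every $t$ in the open interval $\paren{\min_j y_j,\ \max_j y_j}$ lands inside some $I_i$. Thus the intervals $\set{I_i}$ cover an interval whose length $L$ is at least the spread of the means, $L \ge \norm{\means}_\infty$, and subadditivity of length forces $\sum_i s_i \ge L \ge \norm{\means}_\infty$. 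This covering observation is the crux of the argument and the step I expect to be least routine: it converts the purely combinatorial ``no admissible threshold exists'' hypothesis into a single scalar inequality, sidestepping any case analysis over how the blocks are ordered.

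To finish I would exploit that the entries of $\errs$ on block $i$ have mean zero, so their two extreme entries alone give $\sum_{j\in\block{i}}\err{j}^2 \ge \half s_i^2$. Summing and applying Cauchy--Schwarz over the $q$ blocks yields
\[
\norm{\errs}_2^2 \;\ge\; \half\sum_i s_i^2 \;\ge\; \frac{1}{2q}\paren*{\sum_i s_i}^2 \;\ge\; \frac{\norm{\means}_\infty^2}{2q} \;>\; \frac{\norm{\bx}_2^2}{2qn}.
\]
Substituting $\norm{\bx}_2^2 = \norm{\by}_2^2 - \norm{\errs}_2^2$ and rearranging gives $\paren{1+2qn}\norm{\errs}_2^2 > \norm{\by}_2^2$, that is $\norm{\errs}_2 > \minperbfrac\norm{\by}_2$; contraposing proves the theorem. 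The only genuine subtlety beyond the covering step is the bookkeeping at the corner vertices, but since $\errs$ is supported entirely on the internal vertices this affects only the constants, which is precisely why the corner analysis can be suppressed as the section advertises.
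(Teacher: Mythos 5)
Your proof is correct and lands on the paper's constant $\minperbfrac$, but the combinatorial heart of the argument is genuinely different from the paper's. Writing $\alpha_i$ and $\beta_i$ for the largest and smallest entries of $\errs$ on $\block{i}$, the paper sorts the blocks so that $\mean{1}\le\cdots\le\mean{q}$, argues that failure to recover forces $\alpha_i-\beta_{i+1}>\mean{i+1}-\mean{i}$ for every consecutive pair, telescopes to get $\norm{\errs}_1\ge\mean{q}-\mean{1}\ge n^{-1/2}\norm{\bx}_2$, and converts to the $2$-norm using that the sum touches at most $2q$ distinct entries. You instead encode ``no admissible threshold exists'' as an interval-covering statement: the per-block ranges $I_i$ must cover $\paren{\min_j y_j,\max_j y_j}$, hence $\sum_i s_i\ge\max_i\mean{i}-\min_i\mean{i}\ge\norm{\means}_\infty$, and you pass to the $2$-norm via $\sum_{j\in\block{i}}\err{j}^2\ge\frac{1}{2}s_i^2$ (valid because $\errs$ has mean zero on each block, so its extreme entries there are distinct and of opposite sign) followed by Cauchy--Schwarz over the $q$ blocks; both routes pay the same $\sqrt{2q}$ and otherwise share the decomposition $\by=\bx+\errs$ and \cref{thm:mus-apart}. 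Your covering step is, if anything, the more watertight of the two: it is exactly equivalent to non-recovery, whereas the paper's criterion --- existence of a consecutive pair with $\alpha_i-\beta_{i+1}\le\mean{i+1}-\mean{i}$ --- is necessary for recovery but not sufficient (a single block whose entries span the whole range of $\by$ can straddle the gap between two other adjacent blocks), so the implication the paper's proof actually invokes requires an argument closer to yours to be fully justified. Your handling of the corners (noting $\errs$ vanishes there since $\blockconstspace$ contains the corner cardinal vectors, and deferring the constants) matches the paper's stated convention for this section.
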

\begin{proof}
    Define $\bx = \Proj{\blockconstspace}\by$.
    Without loss of generality, relabel the vertices and blocks such that $\mean{i} \le \mean{i+1}$.
    Let $\alpha_i = \maxelt{\err{}}$ and $\beta_i = \minelt{\err{}}$ for each $\block{i}$.
    Note that $\alpha_i > 0$ and $\beta_i < 0$ since $\errs \perp \blockindicator{i}{q}$.
    The vector $\by$ recovers all the cliques if and only if there is a $\block{i}$ where
    $\alpha_{i} - \beta_{i+1} \le \mean{i+1} - \mean{i} $.
    In this case, a threshold can be chosen in
        $\opintervalop{
            \mean{i}+\alpha_{j}}{
            \mean{i+1}+\beta_{k}}
        $.
    Suppose that $\by$ does not recover all the cliques,
    then for all $\block{i}$ $\alpha_{i} - \beta_{i+1} > \mean{i+1}-\mean{i}$.
    This implies $\sum_{i=0}^{q-1} \alpha_{i}-\beta_{i+1} > \sum_{i=0}^{q-1} \mean{i+1}-\mean{i}$.
    Thus  we can bound the 1-norm error as follows:
    \[
        \norm{\errs}_1
        \ge \sum_i\paren{\alpha_{i} - \beta_{i+1}}
        \ge \mean{q} - \mean{1}
        \ge n^{-\half}\norm{\bx}_2.
    \]
    Since $\errs$ must have at least $q$ nonzero entries $\norm{\errs}_2 > \paren{2qn}^{-\half}\norm{\bx}_2$.
    Applying $\norm{\by}^2 = \norm{\bx}^2 + \norm{\errs}^2$, we see that
    ${\paren{1+2qn}^{-\half}}\norm{\by} < \norm{\errs}_2$.

\end{proof}

The proof of Theorem~\ref{thm:minimal-perturbation-lb} yields a construction
for the minimal perturbation of $\bx$ that does not recover all the cliques. 

\begin{thm}\label{thm:minimal-perturbation-ub}
    For any unit vector $\bx\in\blockconstspace$ orthogonal to $\ones$,
    there exists a perturbation $\errs$ where $\norm{\errs} < b^{-\half}, \Proj{\blockconstspace}\errs = 0$
        such that $\by = \bx+\errs$ does not recover all the cliques.
\end{thm}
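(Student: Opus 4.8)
The plan is to build $\errs$ explicitly and then check that it blocks every sweep threshold. First I would recast ``recovers all the cliques'' as a statement about intervals on the real line. Working only with internal vertices (corners are set aside, as the paper permits), $\bx$ is constant equal to $\mean{i}$ on $\block{i}$, and adding $\errs$ spreads the values of $\block{i}$ over an interval $I_i=[\mean{i}+\beta_i,\ \mean{i}+\alpha_i]$, where $\alpha_i=\max_{j\in\block{i}}z_j\ge 0$ and $\beta_i=\min_{j\in\block{i}}z_j\le 0$; the signs are forced because $\errs\perp\blockindicator{i}{q}$ makes each block's perturbation sum to zero. A threshold $t$ keeps every clique intact exactly when $t$ avoids the interior of every $I_i$, so $\by$ fails to recover the cliques precisely when the sorted intervals cover $(\min\by,\max\by)$ with no interior gap. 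Sorting so that $\mean{1}\le\cdots\le\mean{q}$ and writing $g_i=\mean{i+1}-\mean{i}$, the target condition is $\alpha_i-\beta_{i+1}>g_i$ for every $i$ --- exactly the inequalities whose negation drove the lower bound in \cref{thm:minimal-perturbation-lb}, so this theorem is the ``saturating'' converse of that argument.

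Second, I would realize these inequalities with a block-local, zero-sum, minimal-norm perturbation, bridging each gap \emph{from both sides}. I would ask $\block{i}$ to reach up by $g_i/2$ and $\block{i+1}$ to reach down by $g_i/2$: on $\block{i}$, place $+g_i/2$ on one internal vertex and $-g_{i-1}/2$ on another, then distribute the residual over the remaining internal vertices so the block sum vanishes. This keeps $\Proj{\blockconstspace}\errs=0$ while producing $\alpha_i=g_i/2$ and $\beta_i=-g_{i-1}/2$ (up to a vanishing correction), so $\alpha_i-\beta_{i+1}=g_i$, which I nudge strictly above $g_i$ by an arbitrarily small amount. The crucial gain from the two-sided construction is that each gap $g_i$ contributes only about $g_i^2/4$ from each of its two incident blocks, so the squared norm of $\errs$ is roughly $\frac12\sum_i g_i^2$ rather than $\sum_i g_i^2$.

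Third, I would bound the total norm by the spread of the block means. Summing the per-block contributions gives
\[
\|\errs\|^2 \;\lesssim\; \frac12\sum_i g_i^2 \;\le\; \frac12\left(\sum_i g_i\right)^2 = \frac12(\mean{q}-\mean{1})^2 \;\le\; \sum_i \mean{i}^2,
\]
where the last step uses $(\mean{q}-\mean{1})^2\le 2(\mean{1}^2+\mean{q}^2)\le 2\sum_i\mean{i}^2$. Since $\bx$ is a unit vector whose $\mean{i}$ is repeated across all internal vertices of $\block{i}$, one has $\sum_i\mean{i}^2=\Theta(1/b)$ and, by orthogonality to $\ones$, the means straddle zero. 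Hence $\|\errs\|^2=O(1/b)$, matching the stated $b^{-\half}$ bound, and $\by=\bx+\errs$ fails to recover the cliques by construction.

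The main obstacle is the \emph{constant} in this bookkeeping, not the construction. The binding configuration is two clusters (two blocks at $\pm a$ with the rest neutral, or $q=2$), which simultaneously maximizes the spread and gives no interior blocks across which to amortize the bridging cost; there the chain above is tight to leading order and the zero-sum correction (a factor like $\tfrac{b-1}{b-2}$) is most visible, so squeezing the estimate strictly below $b^{-\half}$ relies on the genuine slack present when $q\ge 3$ together with the omitted corner mass, which only decreases $\sum_i\mean{i}^2$. I would also handle the sweep-cut boundary carefully: since the cut uses the strict test $x_i>t$, intervals that merely \emph{touch} can still admit a separating threshold, so the construction must force strict overlap via the small nudge above.
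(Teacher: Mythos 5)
Your construction is the same as the paper's: the two-sided bridge $\alpha_i=-\beta_{i+1}=\frac{1}{2}(\mu_{i+1}-\mu_i)$ supported on internal vertices of consecutive blocks, followed by the computation $\norm{\errs}_2^2=\frac{1}{2}\sum_i(\mu_{i+1}-\mu_i)^2$ and a comparison against $\sum_i\mu_i^2=\Theta(b^{-1})\norm{\bx}_2^2$. The only divergence is the final bounding step (you telescope $\sum_i g_i=\mu_q-\mu_1$ and bound by the two extreme means, while the paper expands the squares and discards the cross terms), and your closing caveats --- strict overlap versus touching intervals under the strict sweep test, and tightness of the constant in the two-cluster configuration --- identify genuine loose ends in the paper's own one-line calculation rather than defects in your argument.
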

\begin{proof}
For any $\bx\in\blockconstspace$, 
set $\alpha_0=0, \beta_{q-1}=0, \alpha_i=-\beta_{i+1} = \frac{\mean{i+1}-\mean{i}}{2}$.
\[
    \norm{\errs}_2^2
    = \sum_{i=0}^{q-1} \alpha_i^2 + \beta_{i+1}^2
    = \half\sum_{i=0}^{q-1}\paren{\mean{i+1}-\mean{i}}^2
    = b^{-1}\norm{\bx}_2^2 -2\sum_{i=0}^{q-1}\mean{i+1}\mean{i}
    < b^{-1}\norm{\bx}_2^2
\]
\end{proof}
Theorem~\ref{thm:minimal-perturbation-lb} implies that $\minperbfrac$ is a sufficient accuracy to ensure recovery of all the cliques, and Theorem~\ref{thm:minimal-perturbation-ub} implies that for some elements of the top invariant subspace
accuracy less than $\minperbupperbound$ is necessary to ensure recovery of all the cliques from that vector.

\Cref{fig:minimal-perturbation-lowerbounds} lends validation to the formulas in \cref{thm:minimal-perturbation-lb}.
The experiment shown is to take a random (gaussian unit norm) linear combination of $\cX_{signal}$, and then construct the minimal perturbation that makes a mistake.
\Cref{fig:minimal-perturbation-lowerbounds} shows the minimum over all samples as a function of $n$.
This experiment is conducted for three different parameter regimes, $q=25$, $b=25$, and $b=q=\sqrt{n}$.
One can see that the lower bound from \cref{thm:minimal-perturbation-lb} is below the empirical observation,
and that this lower bound is within a constant factor of the observed size of the minimal perturbation.
\begin{figure}
    \centering
    \includegraphics[width=0.9\textwidth]{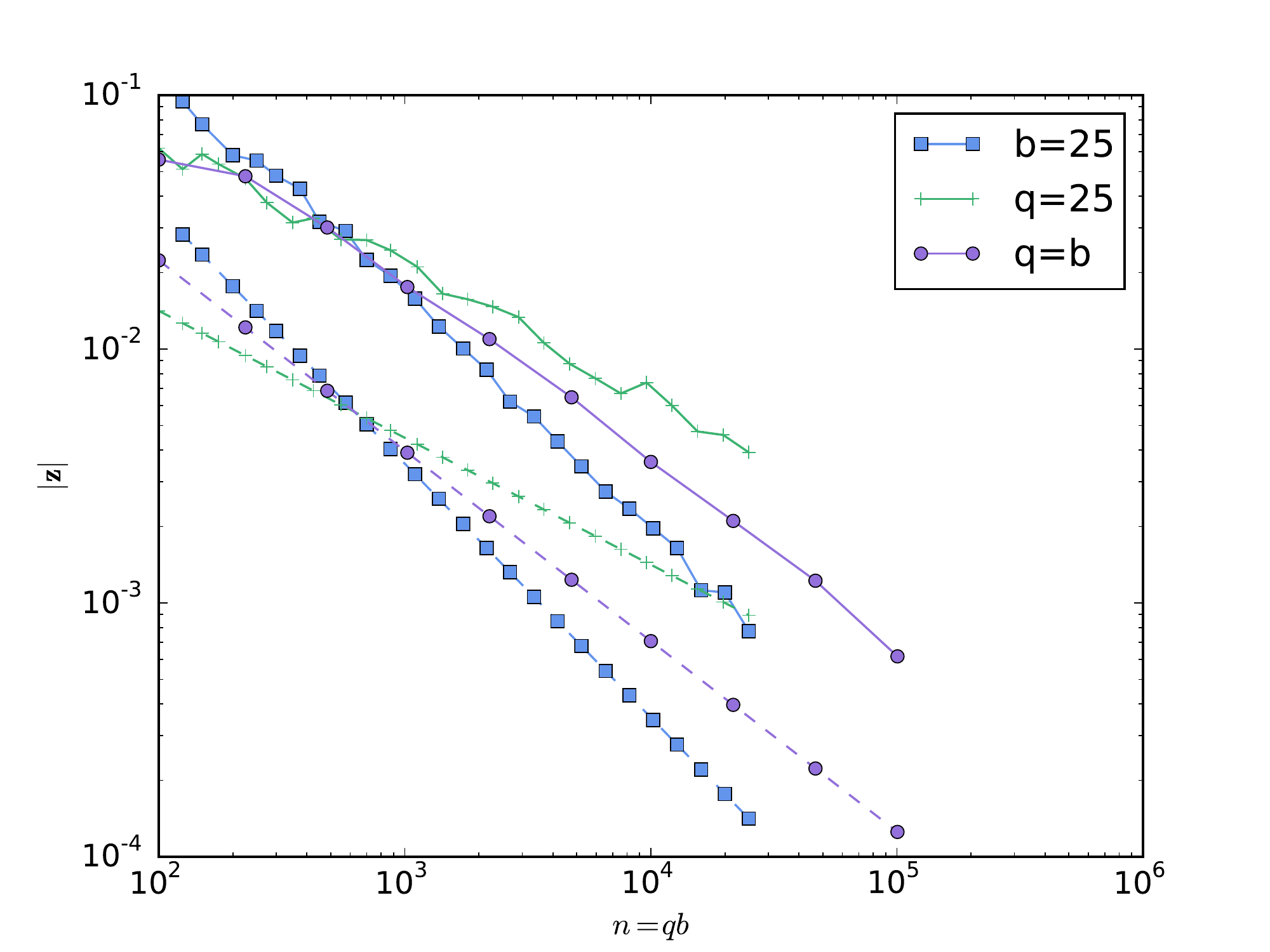}
    \caption{Empirical measurements of minimal error perturbations on a log-log scale. Lower bounds are shown in the same color with dashed lines.}\label{fig:minimal-perturbation-lowerbounds}
\end{figure}

We can now apply Theorem~\ref{thm:singleevecerror} and Theorem~\ref{thm:blenderror}
to determine the residual tolerance for an eigensolver for graph paritioning. 
The residual tolerance can be no larger than that for the the ring of cliques,
but some graphs may require smaller tolerances.

\subsection{The Power Method}\label{sec:powermethod}


\newcommand{\erf}{\mbox{erf}}
\newcommand{\kstar}{k^*}
\newcommand{\theprob}{1-\zeta}
\newcommand{\lbar}{\lambda^*}
From the eigenvalues and error tolerances above, one can determine an upper bound on the number of iterations required by the power method to recover all the cliques in $\Roc{}$.

\begin{thm}\label{thm:rocpowermethod}
    Let $\bx_0$ be sampled from $\cN_n\paren{0, 1}$.
    Let $\bx_k$ be the $k$-th iteration of the power method, $\bx_{k+1} \longleftarrow \Ah \bx_k / \|\bx_k\|$ for $\Roc{}$. 
    Let $\zeta = \left(\frac{e^{7/8}}{8}\right)^{q/2} + \left(\frac{2}{e}\right)^{(n-q)/2}$. 
    There is a $\kstar$ of $\bigo{\log_b q}$ such that 
    for $k \geq \kstar$ a sweep cut based on $\bx_k$ makes no errors
    with probability at least $\theprob$. 
\end{thm}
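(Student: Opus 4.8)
The plan is to follow the energy of the random start $\bx_0$ through the three spectral regions of $\Ah$ identified in Theorems~\ref{thm:Xnoise} and~\ref{thm:Epairs}, and then feed the result into the recovery criterion of \Cref{thm:minimal-perturbation-lb}. Decompose $\mathbb{R}^n = \cX_{signal}\oplus \cX_{rest}$, where $\cX_{signal}$ is the $q$-dimensional span of the eigenvectors with eigenvalues $\lambda_1^{(k)}$ near $1$, and $\cX_{rest}$ collects the noise eigenspace $\cX_{noise}$ (eigenvalue $-(b-1)^{-1}$) together with the $\lambda_2^{(k)}$-eigenspaces. By \Cref{cor:evals} every eigenvalue contributing to $\cX_{rest}$ has magnitude at most $\lbar = \bigo{b^{-1}}$, while every signal eigenvalue is at least $\lambda_q \ge 1-\bigo{b^{-2}}$. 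The dimension of $\cX_{rest}$ is $n-q$, which is exactly where the two exponents in $\zeta$ will originate.

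First I would exploit rotational invariance of the Gaussian: in an orthonormal eigenbasis of the symmetric matrix $\Ah$ the coordinates of $\bx_0$ are i.i.d. standard normals, so $\norm{\Pi_{signal}\bx_0}^2 \sim \chisquared{q}$ and $\norm{\Pi_{rest}\bx_0}^2 \sim \chisquared{n-q}$, where $\Pi_{signal},\Pi_{rest}$ are the orthogonal projectors onto the two subspaces. Applying the standard bound $\Pr[\chisquared{m}\le tm]\le (te^{1-t})^{m/2}$ with $t=\frac18$ and its companion upper tail with $t=2$ gives
\[
\Pr\left[\norm{\Pi_{signal}\bx_0}^2 < \frac{q}{8}\right] \le \left(\frac{e^{7/8}}{8}\right)^{q/2},
\qquad
\Pr\left[\norm{\Pi_{rest}\bx_0}^2 > 2(n-q)\right] \le \left(\frac{2}{e}\right)^{(n-q)/2}.
\]
A union bound shows that both complementary events hold with probability at least $\theprob$, and I would condition on them throughout.

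Next I would propagate these estimates through $k$ iterations: since the power method scales the component in each eigenspace by the corresponding eigenvalue, the surviving signal energy is at least $\frac{q}{8}\lambda_q^{2k}$ while the rest energy is at most $2(n-q)\,\lbar^{\,2k}$. The crucial observation is that after the $\Dneghalf$ rescaling both the signal eigenvectors and the $\lambda_2^{(k)}$ eigenvectors are block-constant on the internal vertices of every clique, so the only part of $\Dneghalf\bx_k$ leaving the block-constant space $\blockconstspace$ is the $\cX_{noise}$ component, whose norm decays like $(b-1)^{-k}$. Setting $\by=\paren{\bI-\frac{1}{n}\ones\ones^t}\Dneghalf\bx_k$ (subtracting the mean is a translation that leaves every sweep cut, hence every recovery decision, unchanged, while making $\by\perp\ones$), I would bound $\norm{\errs}/\norm{\by}$ by the ratio of the rest energy to the surviving non-constant signal energy, up to the $\bigo{1}$ degree factor $\sqrt{d_{max}/d_{min}}=\sqrt{(b+1)/(b-1)}$. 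Requiring this ratio to drop below the threshold $\minperbfrac$ of \Cref{thm:minimal-perturbation-lb} reduces, after taking logarithms and using $\lambda_q\approx 1$, to $(b-1)^{k}\gtrsim b\,q$, i.e. $k\ge\kstar=\bigo{\log_b q}$. Because $(b-1)\lambda_q>1$, the left-hand side only grows with $k$, so the estimate persists for every $k\ge\kstar$, and on the conditioned event \Cref{thm:minimal-perturbation-lb} then guarantees that $\bx_k$ recovers all the cliques.

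The main obstacle is the clean separation of the Perron/constant direction from the informative signal. The dominant eigenvalue $1$ has the useless constant eigenvector $\Dneghalf(D^{1/2}\ones)=\ones$, so one must verify that the projection onto $\ones^\perp$ removes exactly this direction and that the remaining non-constant signal, whose eigenvalues are all within $\bigo{b^{-2}}$ of $1$, is neither erased nor overtaken by the noise across the $\bigo{\log_b q}$ iterations; this is what makes a single lower bound $k\ge\kstar$ (rather than a window) correct. A secondary technicality is that the unequal corner and internal degrees make the $\Dneghalf$-images only approximately $\ones$-orthogonal and block-constant, and I would absorb these discrepancies into the $\sqrt{(b+1)/(b-1)}=\bigo{1}$ constants, mirroring the decision in \Cref{sec:roc-perturbation} to omit explicit handling of the corner vertices.
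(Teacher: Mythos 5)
Your proposal follows essentially the same route as the paper's proof: the identical $\chi^2$ Chernoff bounds with $t=\frac18$ and $t=2$ producing the stated $\zeta$, the $(\lbar/\lambda_q)^k$ attenuation of the $(n-q)$-dimensional complement against the preserved signal energy, the threshold $\minperbfrac$ from \Cref{thm:minimal-perturbation-lb}, and the asymptotics of \Cref{cor:evals} to conclude $\kstar=\bigo{\log_b q}$. If anything you are more explicit than the paper about deflating the Perron/constant direction and absorbing the $\Dneghalf$ degree discrepancies into $\bigo{1}$ constants, which the paper's proof leaves implicit.
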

\begin{proof}
First, we bound $\|\orthproj \bx_0\|^2$ and $\|\spectralproj \bx_0\|^2$ probabilistically.   Each entry in $\bx_0$ is independently sampled from $\cN\paren{0, 1}$.   For any orthonormal basis of $\mathbb{R}^n$, $\{\bv_k\}_{k=1}^n$, the distribution of each $\bv_k^t \bx_0$ is also $\cN\paren{0, 1}$.   Therefore, the distribution of $\|\spectralproj \bx_0\|^2 $ is a $\chi^2$-distribution of order $q$, which has expected value $q$ and cumulative distribution function $\gamma(q/2, z/2)/\Gamma(q/2)$, where $\Gamma(\cdot)$ is the gamma function and $\gamma(\cdot, \cdot)$ is the lower incomplete gamma function.    Let $c_0 \in (0,1)$, using Chernoff bounds we have
$$
p_0 := \mbox{Prob} \left[ \|\spectralproj \bx_0\|^2 > c_0 q \right] = 1- \frac{\gamma\left(\frac{q}{2}, \frac{c_0 q }{2}\right)}{\Gamma\left(\frac{q}{2}\right)} \geq 1-\left(  c_0 e^{1 - c_0} \right) ^{q/2}
$$

Similarly, $\|\orthproj \bx_0\|^2 $ is from a $\chi^2$-distribution  of order order $n-q$, with expected value $n-q$ and known cdf.    Let $c_1 \in (1,\infty)$, we have
$$
p_1 := \mbox{Prob} \left[ \|\spectralproj \bx_0\|^2 < c_1 (n-q) \right] = \frac{\gamma\left(\frac{n-q}{2}, \frac{c_1 (n-q) }{2}\right)}{\Gamma\left(\frac{n-q}{2}\right)} \geq 1-\left(  c_1 e^{1 - c_1} \right) ^{(n-q)/2}
$$

The union of events $\left[ \|\spectralproj \bx_0\|^2 > c_0 q \right]$ and $\left[ \|\spectralproj \bx_0\|^2 < c_1 (n-q) \right]$ is a subset of all possibilities for which $\left[ \frac{ \|\orthproj \bx_0\|^2 }{\|\spectralproj \bx_0\|^2} < \frac{c_1 (n-q)}{ c_0 q} \ \right]$ holds.   Therefore, setting $c_0 = 1/8$ and $c_1 = 2$, we see

$$
\mbox{Prob} \left[ \frac{ \|\orthproj \bx_0\|^2 }{\|\spectralproj \bx_0\|^2} < \frac{c_1 (n-q)}{ c_0 q} \ \right] > p_0 p_1 >1 - \zeta
$$
where $\zeta := \left(\frac{e^{7/8}}{8}\right)^{q/2} + \left(\frac{2}{e}\right)^{(n-q)/2}$ is a small positive constant
when $q,b > 4$.
Because a sweep cut does not depend on the norm of a vector, we consider the iteration, $\bx_{k} \longleftarrow
\frac{1}{\lambda_q}\Ah \bx_{k-1}$ which is equivalent to the power method.   
Letting $\lbar = \pairmax{\abs{\lambda_{q+1}}}{\abs{\lambda_n}}$,
this iteration accentuates vector
components in the range of $\spectralproj$ by a factor greater than 1 and attenuates those orthogonal to this space by
factors less than $\lbar/ \lambda_q$.   If $\|\spectralproj \bx_0\|^2 > c_0 q$, then
   
$$\|\bx_k\|^2 \geq \|\spectralproj \bx_k\|^2 \geq \|\spectralproj \bx_0\|^2 \geq c_0 q.$$

Also, if $\|\orthproj \bx_0\|^2 \leq c_1 (n-q)$, then

$$  \|\orthproj \bx_k\|^2 \leq \left( \frac{\lbar}{\lambda_q}\right)^{2k}  \|\orthproj \bx_0\|^2 \leq \left(
\frac{\lbar}{\lambda_q}\right)^{2k} c_1 (n-q).$$    

Therefore, under the assumptions on $\bx_0$, the $k$-th iteration satisfies

$$
\frac{\norm{\orthproj \bx_k}}{\norm{\bx_k}} \leq \left( \frac{\lbar}{\lambda_q}\right)^{k} \sqrt{\frac{c_1}{c_0} (b-1)}
= 4 \left( \frac{\lbar}{\lambda_q}\right)^{k} \sqrt{b-1}.
$$
    
By \cref{thm:minimal-perturbation-lb}, if this ratio is less than $\paren{1+2qn}^{-1/2}$, then $\bx_k$ makes no errors. We see this is ensured by
    $$
    k \geq \kstar := \left\lceil \frac{ \log 4 + \log (b-1) + \log(1+2qn)}{2\log\left( \lambda_q / \lbar\right)} \right\rceil.
    $$
Revisiting Corollary~\ref{cor:evals} we that $\lambda_q > 1 - C_q/b^2$ and $\lbar < \max(C_{q+1}, C_n)/b$ so
$\lambda_{q} / \lbar = C^* b$, where $C^*$ is an order 1 constant.   Plugging this in we see that $\kstar$ is in $\bigo{\log_b q}$.
    

\end{proof}

\subsection{Experiment}\label{sec:roc_experiment}
Here we show the results of a numerical experiment in order to lend intuition and validation to the theorems.
Take $\Roc{}$ and a random seed vector $\xnaught$.
Then apply power iteration $\bx^{(i)} = (\ahatshifted) \bx^{(i-1)}$.
Far $b=20$ and $q=30$ the relevant measures of convergence are shown in \cref{tab:pmeth}.
\Cref{fig:pmeth} illustrates the convergence behavior in terms of the conductance of all sweep cuts,
	and the reordered adjacency matrix represented in sparsity plots.
\Cref{tab:pmeth} shows that the convergence to the Fiedler vector stalls after iteration 3,
	but convergence to the space orthogonal to $\cX_{noise}$ continues unabated.   Letting $\Pi$ be the projection onto $\cX_{noise}^\perp$, we measure $\|\Pi \bx^{(i)}\|$ for each iteration.
Applying \cref{thm:rocpowermethod}, we calculate that $k^* =5$ iterations will perfectly resolve the clique structure with probability at least $1-\zeta = 0.99999998575$.  After one iteration the sweep cut did not split any cliques, but only a single clique is shaved off.   After 3 iterations a nearly optimal partition is found.   


\begin{minipage}[h]{\textwidth}
\centering
\begin{tabular}{ccc}
First Iteration & Second Iteration & Third Iteration \\
\includegraphics[width=132pt]{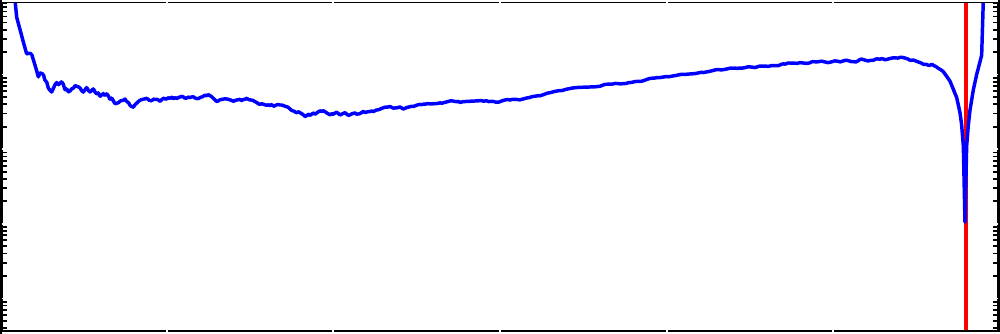}&
\includegraphics[width=132pt]{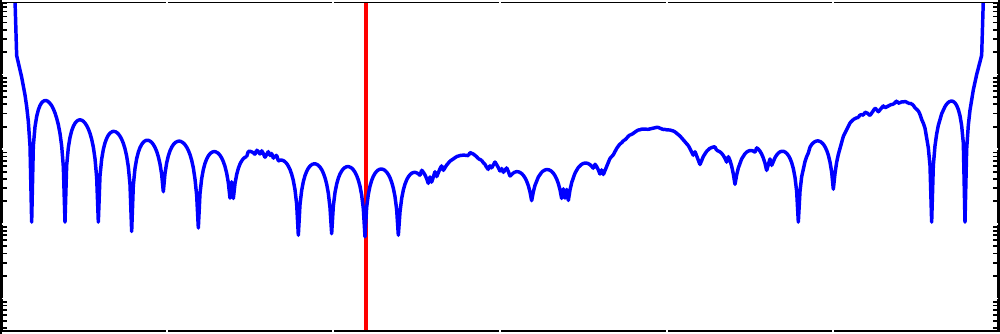}&
\includegraphics[width=132pt]{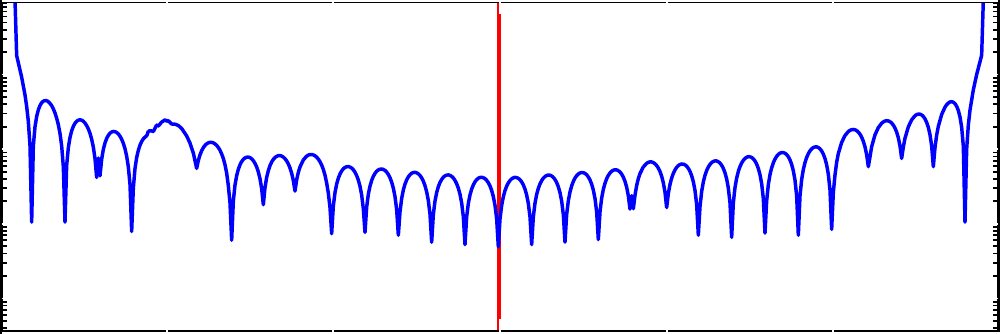}\\
\includegraphics[width=132pt]{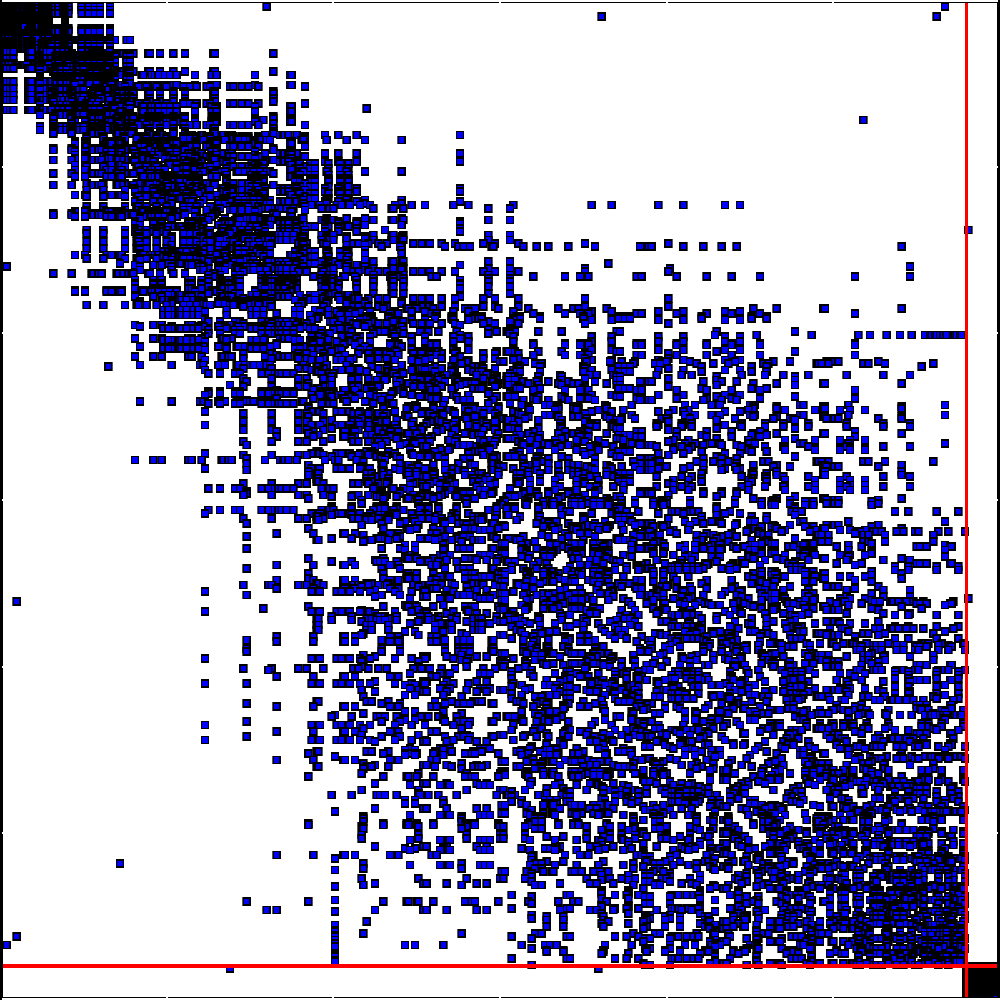}&
\includegraphics[width=132pt]{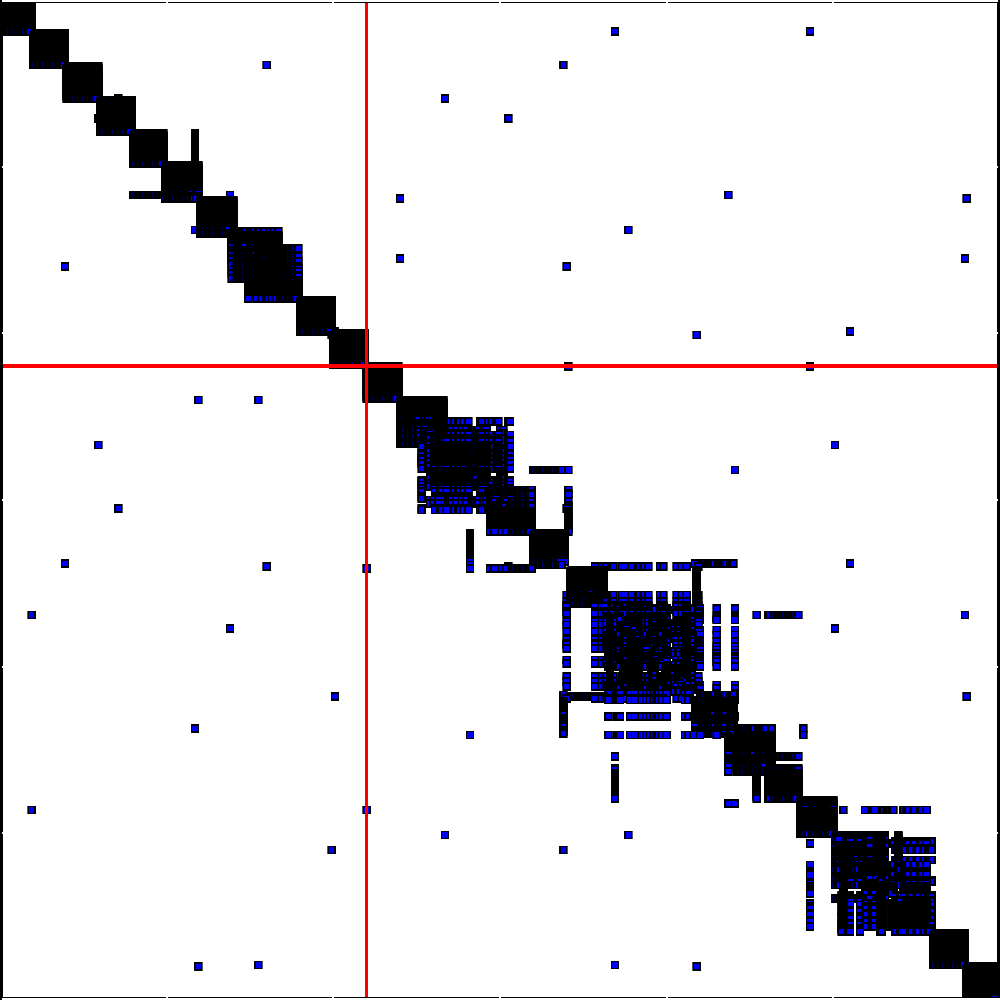}&
\includegraphics[width=132pt]{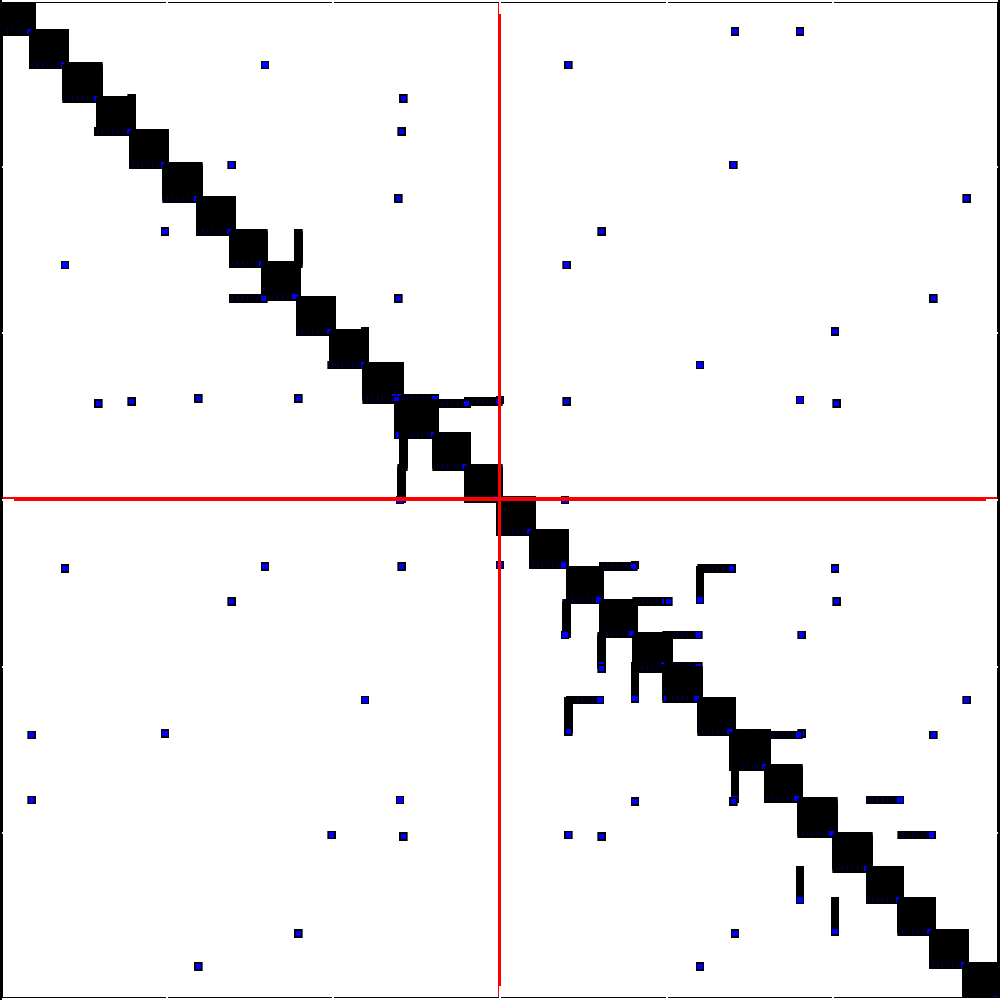}
\end{tabular}
\captionof{figure}{First several iterations of the power method applied to ${\cal R}_{b=20,q=30}$.    Above: sweep conductance of $A$ reordered by sorting the 1$^{st}$ (top-left), 2$^{nd}$ (top middle), and 3$^{rd}$ iterations (top-right).   Horizontal axis represents which vertex to split at under the induced ordering; vertical axis is the conductance for each split on a log scale.   Below: matrix sparsity plots of $A$ reordered by sorting the 1$^{st}$ (bottom-left), 2$^{nd}$ (bottom-middle), and 3$^{rd}$ iterations (bottom-right).  Red lines demonstrate which edges are cut for the optimal cut in each ordering.   }
\label{fig:pmeth}

\vspace{.5cm}
\begin{tabular}{|c|c|c|c|c|c|}
\hline
      $i$ & $\epsilon$  & $\| \Pi \bx^{(i)} \|$ &    $\mu$    & $\phi(\bx)$ & $\sqrt{2 \mu}$ \\ \hline
        0 & 1.24806e+01 & 2.25433e+01 & 1.52966e+00 & 2.03094e+00 & 1.74909e+00   \\ 
        1 & 9.20534e-01 & 5.02679e-02 & 8.37276e-01 & 1.05263e-02 & 1.29404e+00   \\ 
        2 & 1.02629e-01 & 1.45977e-02 & 5.67751e-02 & 6.68577e-03 & 3.36972e-01   \\ 
        3 & 1.08242e-02 & 8.10095e-04 & 1.01284e-02 & 4.89853e-03 & 1.42326e-01   \\ 
        4 & 1.01568e-02 & 4.28591e-05 & 9.87761e-03 & 4.89853e-03 & 1.40553e-01   \\ 
        5 & 1.01262e-02 & 2.26698e-06 & 9.85062e-03 & 4.89853e-03 & 1.40361e-01   \\ 
        6 & 1.01022e-02 & 1.19907e-07 & 9.82549e-03 & 4.89853e-03 & 1.40182e-01   \\ 
        7 & 1.00782e-02 & 6.34217e-09 & 9.80038e-03 & 4.89853e-03 & 1.40003e-01   \\ 
        8 & 1.00543e-02 & 3.35448e-10 & 9.77526e-03 & 4.89853e-03 & 1.39823e-01   \\ 
        9 & 1.00303e-02 & 1.77422e-11 & 9.75013e-03 & 4.89853e-03 & 1.39643e-01   \\ 
       10 & 1.00063e-02 & 9.38388e-13 & 9.72498e-03 & 4.89853e-03 & 1.39463e-01   \\ 
 $\cdots$ & $\cdots$ & $\cdots$ & $\cdots$  & $\cdots$ & $\cdots$ \\ 
 $\infty$ & 0.00000e+00 & 0.00000e+00 & 1.14176e-04 & 3.50018e-04 & 1.51113e-02   \\
&  && $\lambda_2(\Lh)$ & $\phi(\bv_2) = \phi_{\cG}$ & $\sqrt{2 \lambda_2(\Lh)}$   \\  \hline
\end{tabular}
\captionof{table}{Table corresponding to Figure~\ref{fig:pmeth} with 10 iterations.  Convergence to the Fiedler eigenpair is slow, yet convergence to the orthogonal complement of $\cX_{noise}$ is rapid (column 2). }
\vspace{.5cm}

\label{tab:pmeth}
\end{minipage}


\section{Conclusions}

\renewcommand{\LSEVs}{locally supported eigenvectors}
When partitioning graphs where the spectral gap is small, computation of an accurate approximation to a Fiedler vector is difficult.
In order to satisfy the needs of spectral partitioning without computing eigenvectors to high accuracy,
we introduce spectral blends and in particular the \emph{blend gap}.
Section~\ref{sec:convergeblends} controls the distance between an approximate eigenvector and an invariant subspace in
terms of the eigenresidual and blend gap thereby showing that accurate approximation to a spectral blend is easier to
compute than an accurate approximation of a single eigenvector.
We provide a general tool for deriving residual tolerances based on the structure of the graph spectrum.
In order to illustrate the utility of spectral blends, 
  Section~\ref{sec:ROCstart} studies a model problem and uses the theory of block cyclic matrices and \LSEVs{} to present a closed form for the eigenvalues and vectors.
We show that any blend of large eigenvalue eigenvectors for the \roc{} recovers a correct clustering.
This indicates that for problems where there are multiple good partitions of the graph, spectral blends can be used to partition accurately.
The eigendecomposition of the model problem provides error and residual tolerances for solving this problem with sweep
cuts.
\Cref{thm:minimal-perturbation-lb} allows us to give guidance for error tolerances for spectral partitioning.
One should solve the eigenproblem to a tolerance no greater than $\bigo{n^{-1}}$ for graphs of size $n$.
\Cref{thm:rocpowermethod} shows that for the ring of cliques where the number of clusters is polynomial in the sizes of the clusters, the number of power method steps taken to recover the
clusters is \bigo{1}.
Further research will be able to expand these results to more general graphs which have multiple good partitions.

\bibliography{references,citations}{}
\bibliographystyle{plain}
\end{document}